\theoremstyle{plain}
\newtheorem{theorem}{Theorem}
\newtheorem{corollary}{Corollary}
\newtheorem{lemma}{Lemma}
\newtheorem{proposition}{Proposition}
\theoremstyle{definition}
\newtheorem{definition}{Definition}
\newtheorem{example}{Example}
\newtheorem{remark}{Remark}
\newcommand{\smfrac}[2]{{\textstyle \frac{#1}{#2}}}
\numberwithin{equation}{section}
\begin{document}
\title{Quasistatic nonlinear viscoelasticity and gradient flows}
\author{J. M. Ball\footnote{Oxford Centre for Nonlinear PDE, Mathematical Institute, University of Oxford, Andrew Wiles Building, Radcliffe Observatory Quarter, Woodstock Road, Oxford
OX2 6GG,
 U.K.} and Y. \c{S}eng\"{u}l\footnote{Ozyegin University, Department of Natural and Mathematical Sciences, Ni\c{s}antepe Mah. Orman Sok. No: 13, 34794, Alemda\u{g}, Istanbul, Turkey}}
\date{}
\maketitle

\begin{abstract}
We consider the equation of motion for one-dimensional nonlinear viscoelasticity of strain-rate type under the assumption that the stored-energy function is 
$\lambda$-convex, which allows for solid phase transformations. We formulate this problem as a gradient flow, leading to existence and uniqueness of solutions. By approximating general initial data by those in which the deformation gradient takes only finitely many values, we show that under suitable hypotheses on the stored-energy function the deformation gradient is instantaneously bounded and bounded away from zero. Finally, we discuss the open problem of showing that every solution converges to an equilibrium state as time $t \to \infty$ and prove convergence to equilibrium under a nondegeneracy condition. We show that this condition is satisfied in particular for any real analytic cubic-like stress-strain function.

\end{abstract}

\tableofcontents

\section{Introduction}\label{intro}

In this paper we study the special case of the  equation for quasistatic one-dimensional nonlinear viscoelasticity of strain-rate type given by
\begin{equation}\label{e:qseqn}
\big(\sigma(y_{x})+\,y_{xt}\big)_{x}\,=\,0,\;\;x\in(0,1),
\end{equation}
with initial condition 
\begin{equation}\label{bc0}
y(x,0)=y_0(x).
\end{equation}
Here, $y(x,t)$ is the deformed position at time $t$ of a material point having position $x$ in the reference configuration,  $ \sigma(\cdot) = W'(\cdot)$ is the elastic part of the stress and 
\begin{equation}
\label{sef}
W(p)=\int_1^p\sigma(z)\,dz
\end{equation}
 is the stored-energy function.   In the case when the boundary conditions are
\begin{equation}\label{bc1}
y(0,t)\,=\,0,\,\,\,\, y(1,t)\,=\,\mu > 0,
\end{equation}
equation \eqref{e:qseqn} becomes, on setting $p(x,t)=y_x(x,t)$,
\begin{equation}\label{bc2}
p_{t} (x, t) = - \sigma(p(x,t)) + \displaystyle\int_{0}^{1} \sigma(p(y,t))\,dy,
\end{equation}
which is to be solved for initial condition
\begin{equation}\label{bc3}
p(x,0)=p_0(x),
\end{equation}where $p_0(x)=y_{0x}(x)$, so that $\int_0^1p_0(x)\,dx=\mu$. Our aim is to prove existence and uniqueness of the solutions for (\ref{e:qseqn}), \eqref{bc1} and show that these solutions converge to equilibrium states as time $t$ tends to infinity. Although \eqref{bc2} is a family of identical ordinary differential equations coupled together by a single average, it is by no means simple to analyze.

Starting from the 
general equation of motion for one-dimensional viscoelasticity of rate type given, for constant density $\rho>0$, by
\begin{equation}\label{e:visco}
\rho y_{tt}= \big(\sigma(y_{x})\,+\,S(y_{x},y_{xt})\big)_x, 
\end{equation}
\eqref{e:qseqn} is obtained
by setting $\rho=0$, making the choice   $\;S(y_x,y_{xt}) =\gamma y_{xt}$ for the viscoelastic part of the stress, where $\gamma>0$, and  scaling $t$ so that $\gamma=1$.

Equation \eqref{e:visco} is a prototype for the study of the dynamics of microstructure observed during solid phase transformations (see \cite{Sengul-thesis} for an extensive explanation). The main modelling assumption is that $\sigma$ is not a monotonic increasing function, so that 
$W$ 
is not convex. This can be thought of as the simplest model of a viscoelastic  solid, and it has been studied in many papers, for example  \cite{Daf}, \cite{Kut-Hicks}, {\cite{Ant-Seid},}  where both existence and uniqueness were obtained (see also  \cite{Po-Fe-81}, \cite{Po-Fe-82},  \cite{Demo-2000}, \cite{Tvedt} for treatments in three space dimensions). The study by \cite{Ericksen} of the corresponding equilibrium problem, which showed   that a non-monotone $\sigma$ could lead to reasonable predictions for one-dimensional models of solid phase transformations, motivated much of this work.

  The special case
\begin{equation}\label{jb1}
\rho y_{tt}=(\sigma(y_x)+y_{xt})_x
\end{equation}
corresponding to \eqref{e:qseqn} but  including inertia (i.e. $\rho>0$), was considered by \cite{And},   \cite{And-Ball}, \cite{Pego} and others (see \cite{Frie-Dolz} and \cite{Rybka92} for three-dimensional versions).   \cite{And} obtained an existence theory for weak solutions under   assumptions allowing for a non-monotone $\sigma$, based on a maximum principle for $y_x$.  \cite{And-Ball} then studied the asymptotic behaviour of the solutions as time $t$ goes to infinity, obtaining convergence to equilibrium, but only in the sense of Young measures, for both mixed boundary conditions 
\begin{equation}\label{jb2}
y(0,t)=0,\;\; (\sigma(y_x)+y_{xt})(1,t)=P,
\end{equation}
corresponding to the end $x=0$ of the bar being  fixed and the end $x=1$ being subjected to a force $P$, and displacement boundary conditions \eqref{bc1}, in the latter more difficult case under a nondegeneracy condition on $\sigma$ (see Section \ref{s:asymbehav}). Motivated by the maximum principle of Andrews, \cite{Pego} reformulated \eqref{jb1} as a semilinear parabolic partial differential equation coupled to an ordinary differential equation, and in this way proved convergence to equilibrium in the energy norm   under the boundary conditions \eqref{jb2}. The case of convergence to equilibrium for the boundary conditions \eqref{bc1} remains open in general, though as we remark in Section \ref{full} this in fact follows from the method of Pego under the nondegeneracy condition. 

The somewhat simpler equation \eqref{e:qseqn}, while presenting the same essential difficulties as the equation with inertia \eqref{jb1}, permits a somewhat simpler analysis, as well as stronger results. Under the assumption that $W$ is $\lambda-$convex, that is   $W(p)+\frac{1}{2}\lambda p^2$ is convex for some $\lambda>0$, we can apply the theory of $\lambda-$convex gradient flows of \cite{Brezis} (as used in  \cite{Mie-Ste}, \cite{Ros-Sav}, \cite{AGS})   to prove existence and uniqueness for \eqref{bc2}, \eqref{bc3} (see Section \ref{s:egft}). However it proves convenient to use a slightly different method based on the fact that 
\eqref{e:qseqn} has  solutions  taking only finitely many values that are described by a corresponding finite number of ordinary differential equations. Passing to the limit in these equations, using the same estimates as Brezis, enables one not only to prove existence and uniqueness for initial data $p_0\in L^2(0,1)$, and to justify a natural approach to computing solutions, but also to prove   universal bounds on the solutions independent of the initial data. This procedure is carried out in Section \ref{s:both-ends-fixed}. 

Using these bounds we are able, exploiting Helly's theorem to get relative compactness of positive orbits as in \cite{Serre}, to establish convergence to equilibrium for the case of displacement boundary conditions under a weakened nondegeneracy condition. In particular we prove convergence to equilibrium for a real analytic cubic-like $\sigma$. The general analysis of the nondegeneracy condition for real analytic $\sigma$ having a finite number of critical points raises interesting questions of algebraic geometry and complex analysis that will be addressed in a future paper.  Whether convergence to equilibrium holds in general, without any nondegeneracy condition, seems to be a very difficult problem. 
 Pego \cite{Pego-92} proves that convergence to equilibrium holds for   solutions taking finitely many values, using the theorem of  \cite{Hale-Massatt}, but for general solutions  the motion of phase boundaries presents extra difficulties. We note that a  variational scheme for a three-dimensional version of \eqref{e:qseqn} is discussed in \cite{Sengul-thesis}.

The  problem of convergence to equilibrium for (\ref{e:qseqn}) is   similar to that for the nonlinear diffusion equation studied in \cite{Pego-91}, and that of  homogeneous oscillations for a van der Waals fluid considered by \cite{Serre}. In both papers a form of the nondegeneracy  condition or some other additional hypothesis is used. We may also consider the $n$-dimensional form of \eqref{bc2}
\begin{equation}\label{bc4}
u_{t} =  - \sigma(u) + \frac{1}{| \Omega |} \int_{\Omega} \sigma(u) dx,
\end{equation}
where $\Omega \subset \mathbb{R}^{n}$ is a bounded domain with $n$-dimensional Lebesgue measure $|\Omega|$. This is the special case  
  $\varepsilon = 0$ of the equation 
\[u_{t} = \varepsilon \Delta u - \sigma(u) + \frac{1}{| \Omega |} \int_{\Omega} \sigma(u) dx,\]
studied in \cite{Rubin-Stern}  (see also  \cite{Ward-96}) as a model for phase separation, with, for example, $\sigma(u) = u^{3} - u$. They remark that this model can be obtained in the limit  $\alpha \to 0$ from the modification of the Cahn-Hilliard equation
\[\alpha u_{t} = \Delta ( \sigma(u) - \varepsilon \Delta u + \nu u_{t})\]
 proposed by   \cite{Novick-Cohen}, with the natural boundary conditions on $\partial \Omega$ given by
$$n \cdot \nabla (\sigma(u) - \varepsilon \Delta u - \gamma u_{t}) = n \cdot \nabla u = 0$$
and the mass constraint $$\int_{\Omega} u(x,t) dx = M.$$
In fact in \eqref{bc4} we may without loss of generality take $n=1$ and $\Omega=(0,1)$, so that our results for \eqref{bc2} are also valid for \eqref{bc4}. This is because of the result   that a separable and non-atomic measure space of measure one is isomorphic to the unit interval (\cite{Halmos-vonNeumann}, see also \cite{Rudolph}, \cite{Aaronson}). So there is a one-to-one measure preserving map $\varphi: (0,1) \to \Omega$, where $\Omega$ is endowed with   $n$-dimensional Lebesgue measure normalized so that $\Omega$ has measure one.  So the solution of \eqref{bc4} with initial data $u(x,0)=u_0(x)$ is given by $u(x,t)=p(\varphi^{-1}(x),t)$, where $p$ is the solution of \eqref{bc2}, \eqref{bc3} with initial data $p_0(x)=u_0(\varphi(x))$.


To avoid interpenetration of matter we require solutions of \eqref{e:qseqn} to satisfy $y_{x} \in (0,\infty)$. We handle this by assuming that $\sigma(p)\rightarrow -\infty$ as $p\rightarrow 0+$ and that $y_x(x,0)>0$ a.e. in $(0,1)$.  
We consider two sets of boundary conditions for \eqref{e:qseqn}, mixed and displacement.   For   mixed boundary conditions \eqref{jb2}
we assume without loss of generality that $P=0$, since the case $P\neq 0$ can be treated by replacing $\sigma$ by $\sigma-P$. We analyze this easy case in Section \ref{s:one-end-free}. The analysis helps motivate that carried out in Section   \ref{s:both-ends-fixed} for the more difficult set of displacement boundary conditions \eqref{bc1}.

\section{Mixed boundary conditions}\label{s:one-end-freea}

In this section we consider equation (\ref{e:qseqn}) with the boundary conditions  
\begin{equation}\label{mixed}
y(0,t)=0,\;\;(\sigma(y_x)+y_{xt})(1,t)=0.
\end{equation}
Thus we have to solve 
\begin{equation*}
\sigma(y_{x}(x,t))+\,y_{xt}(x,t)=0,\,\,\, \,\,\,\,x \in (0,1),
\end{equation*}
with $y(0,t)=0$.
Rewriting this equation in terms of $p:= y_{x}$, with $p_0(x)=y_x(x,0)$, the problem   becomes
\begin{displaymath}\label{s:one-end-free}
(\overline{P})\hspace{.5in}  \begin{array}{ll}
p_{t}(x,t)\,=\,-\sigma(p(x,t))\,\,\,\,\text{for}\,\,\,\,x \in (0,1),  \\ \\
p(x,0)\,=\,p_{0}(x),\\
\end{array}  
\end{displaymath}
from which $y$ can be recovered from 
$$y(x,t)=\int_0^xp(s,t)\,ds.$$
By a solution to the initial value problem $(\overline{P})$ on $[0,T], T>0$,we mean a function $p(x,t) \in C([0,T]; L^{1}(0,1)),$ which, for almost every $x \in (0,1),$ is such that $p(x,t)>0$ for $t>0$, $\sigma(p(x,\cdot))\in L^1(0,T)$, and satisfies the equality
\begin{equation}\label{e:soln-oef}
p(x,t)\,=\,p_{0}(x)\,-\,\int_{0}^{t} \sigma(p(x,\tau))\,d\tau
\end{equation}
for all $t \in [0,T]$. We have the following result for $(\overline{P}).$

\begin{theorem}\label{t:oef}
Assume that 
\begin{eqnarray*}
& \mathrm{(i)} & \sigma \colon (0,\infty) \rightarrow \mathbb{R}\,\,\,\text{is locally Lipschitz continuous, i.e. for any}\;\; C>1\;\; \text{there exists a}\\
&&\text{ positive constant}\;\; L=L(C)\;\; \text{such that}\;\;|\sigma(p)-\sigma(q)| \leq L(C)\,|p-q|\;\;\text{whenever}\\
&& \frac{1}{C} \leq |p|, |q| \leq C.
   \\
& \mathrm{(ii)} &  \sigma(p) > 0 \,\,\,\text{for sufficiently large}\,\,p.\\
& \mathrm{(iii)}&\sigma(p) \rightarrow -\infty\,\,\text{as}\,\,p \rightarrow 0{+}.
\end{eqnarray*}
Then, given any $p_{0} \in L^{1}(0,1), p_{0} \geq 0$ a.e. $x \in (0,1),$ there exists a unique solution $p$  to problem $(\overline{P})$ in $C([0,\infty); L^{1}(0,1)).$ 

Moreover there exists a continuous, nondecreasing function $P_{1}(t) > 0,$ independent of $p_{0},$ such that $p(x,t) \geq P_{1}(t)$ for a.e. $x \in (0,1)$ and for all $t > 0.$ 
If  further
\begin{equation}\label{e:iii}
\int_{p_{+} +1}^{\infty} \frac{d z}{\sigma(z)}\,<\,\infty\,,\,\,\text{where}\,\,\,p_{+}\,\,\,\text{is the largest root of}\,\,\,\sigma,
\end{equation}
then there exists a continuous, nonincreasing function $P_{2}(t) < \infty,$ independent of $p_{0}$, such that $p(x,t) \leq P_{2}(t)$ for a.e. $x \in (0,1)$ and for all $t > 0,$ and we have 
\[p \in C((0,\infty); L^{\infty}(0,1)).\]

As $t \to \infty,$ 
\[p(x,t) \to \bar{p}(x)\,\,\,\,\text{for a.e.}\,\,x \in (0,1),\]
where $\bar{p} \in L^{\infty}(0,1)$ and $\sigma(\bar{p}(x)) = 0$ for almost every $x \in (0,1).$

Moreover
\begin{equation}\label{e:conven}
\underset{t\rightarrow 0{+}}{\lim}
\,\int_{0}^{1}\,W(p(x,t))\,d x\,=\,\int_{0}^{1}\,W(p_{0}(x))\,dx. 
\end{equation}
\end{theorem}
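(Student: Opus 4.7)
The key observation is that $(\overline P)$ decouples in $x$: for a.e.\ $x\in(0,1)$, it reduces to the scalar autonomous ODE $\dot p=-\sigma(p)$ with initial value $p_0(x)$. My plan is to construct universal comparison envelopes $P_1(t)\leq p(x,t)\leq P_2(t)$ for this one-parameter family of ODEs and then read off global existence, uniqueness, regularity, and asymptotics from scalar ODE theory, with the $L^1$- and $L^\infty$-valued statements transferred from the pointwise-in-$x$ analysis via dominated convergence.

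For the lower envelope I would define $P_1(t)$ as the nontrivial solution of $\dot z=-\sigma(z)$ issuing from $z(0)=0$, obtained implicitly from $\int_0^{P_1(t)}dz/(-\sigma(z))=t$. Hypothesis (iii) forces $|\sigma(z)|\geq 1$ on some $(0,\delta)$, so $\int_0^\delta dz/(-\sigma(z))<\infty$ and $P_1$ is well-defined, continuous, strictly positive and nondecreasing on $(0,\infty)$, saturating at the smallest positive root $p_-$ of $\sigma$. Because $\sigma$ is Lipschitz on compact subsets of $(0,\infty)$, no solution starting from $p_0(x)\geq 0$ can cross $P_1$ from above, giving $p(x,t)\geq P_1(t)$. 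Under the further assumption \eqref{e:iii}, the symmetric construction from the other end defines $P_2(t)$ via $\int_{P_2(t)}^\infty dz/\sigma(z)=t$, and the same non-crossing argument yields $p(x,t)\leq P_2(t)<\infty$ for $t>0$. Continuous dependence of the flow on initial data over the compact range $[P_1(t),P_2(t)]$ then promotes this to $p\in C((0,\infty);L^\infty(0,1))$, while global existence, uniqueness, and $C([0,\infty);L^1(0,1))$-continuity follow from the Volterra identity \eqref{e:soln-oef} and dominated convergence using the envelopes as the dominating function.

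Convergence to equilibrium at each $x$ is then automatic: a bounded orbit of a scalar autonomous ODE must approach its $\omega$-limit set, which for such an equation can only consist of fixed points, so $p(x,t)\to\bar p(x)$ with $\sigma(\bar p(x))=0$, and $\bar p\in L^\infty(0,1)$ from the upper envelope. For \eqref{e:conven} the chain rule gives the pointwise identity $\tfrac{d}{dt}W(p(x,t))=-\sigma(p(x,t))^2\leq 0$, so $t\mapsto W(p(x,t))$ is nonincreasing and converges pointwise to $W(p_0(x))$ as $t\to 0+$ by continuity of the flow; a monotone convergence argument then passes the limit under the integral.

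The main obstacle I expect is not conceptual but technical: establishing joint measurability of $(x,t)\mapsto p(x,t)$ from measurability of $p_0$ via continuity of the ODE flow; carefully handling the singularities of the vector field at $0$ and (under \eqref{e:iii}) at $\infty$ when defining $P_1$ and $P_2$ and when applying comparison across the unbounded initial range; and checking integrability in the energy identity given that $W$ may a priori blow up near $0$, although the lower envelope $P_1(t)$ keeps $W(p(x,t))$ controlled for every $t>0$ and so confines this difficulty to the endpoint $t=0$.
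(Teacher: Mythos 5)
Your approach is essentially the paper's: decouple in $x$ to the scalar autonomous ODE $\dot p=-\sigma(p)$, build universal envelopes from the singular solutions determined by $\int_0^{P_1(t)}\frac{dz}{-\sigma(z)}=t$ and $\int_{P_2(t)}^{\infty}\frac{dz}{\sigma(z)}=t$, transfer the statements to $L^1$ and $L^\infty$ by dominated and monotone convergence, obtain convergence to roots of $\sigma$ from boundedness of each scalar orbit, and derive \eqref{e:conven} from $\partial_t W(p)=-\sigma(p)^2\le 0$. All of this matches the paper's proof, with your comparison/non-crossing phrasing replacing the paper's equivalent explicit integral computations.

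One step as stated does not work: you cannot use the envelopes as the dominating function for the $C([0,\infty);L^1(0,1))$ continuity. The upper envelope $P_2$ exists only under the extra hypothesis \eqref{e:iii}, which is not assumed for the existence and $L^1$-continuity part of the theorem, and in any case $P_2(t)\to\infty$ as $t\to 0+$, so it dominates nothing near $t=0$, which is exactly where continuity into $L^1(0,1)$ has to be checked. The correct dominating function is the $x$-dependent pointwise bound $p(x,t)\le\max\{C,p_0(x)\}$, where $C$ is chosen via hypothesis (ii) so that $\sigma>0$ on $[C,\infty)$ (hence the solution decreases whenever it exceeds $C$); this lies in $L^1(0,1)$ because $p_0$ does, and it is what the paper uses. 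With that replacement, the rest of your outline goes through, including the treatment of initial values $p_0(x)=0$ via the explicit singular solution and the measurability of $p(\cdot,t)$ from continuous dependence on the sets $\{p_0\ge\varepsilon\}$.
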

\begin{proof} 
By fixing $x,$ we can reduce  our problem to consideration of  the ordinary differential equation 
\begin{equation}\label{ode}
\dot{p} = - \sigma(p)
\end{equation} 
for $p>0$. We have that  $p_0(x)\geq 0$   a.e. $x\in(0,1)$. If $p_0(x)>0$, since $\sigma$ is locally Lipschitz, there exists a unique local solution $p(x,t) \in C([0,T])$ of \eqref{ode} with $p(x,0)=p_0(x)$. 
Consider now the interval $[\varepsilon, C] \subset (0, \infty)$ where $\varepsilon > 0$ is sufficiently small and $C < \infty$ is sufficiently large. By assumptions $\mathrm{(ii)}, \mathrm{(iii)}$ it is clear that the direction field associated with $(\overline{P})$ points in the positive direction at $\varepsilon$ and in the negative direction at $C$. This shows that for \eqref{ode} with initial data $p_0$, 
\begin{equation}\label{e:invint}
p_{0} \in [\varepsilon, C]\,\,\,\,\Rightarrow\,\,\,\,p(t) \in [\varepsilon, C]\,\,\,\,\text{for all}\,\,\,t > 0.
\end{equation}
Therefore, $p(x,t)$ is a global solution of \eqref{ode}. If $p_0(x)=0$ we can define $p(x,t)$ as the unique solution of 
$$ \int_0^{p(x,t)}\frac{dz}{\sigma(z)}=-t,$$
for which $0<p(x,t)<C$ if $t>0$ and $\sigma(p(x,\cdot))\in L^1(0,T)$.
Since, for fixed $t\geq 0$, $p(x,t)$ is for any $\varepsilon>0$ a continuous function of $p_0(x)$ on the set $\{x\in(0,1):p_0(x)\geq \varepsilon\}$, it follows that $p(\cdot,t)$ is measurable. Note that for a.e. $x\in(0,1)$ and all $t\geq 0$ we have
\begin{equation}\label{bound1}
|p(x,t)|=p(x,t)\leq\max\{C,p_0(x)\}.
\end{equation}
Hence $p(\cdot,t)\in L^1(0,1)$ for all $t\geq 0$. Let $t_j\rightarrow t$ in $[0,\infty)$. Then by \eqref{bound1} and dominated convergence we have $p(\cdot,t_j)\rightarrow p(\cdot,t)$ in $L^1(0,1)$. 
Thus we have a unique solution $p(x,t)$ to problem $(\overline{P})$.  

To show the existence of the universal upper and lower bounds, it suffices to do this for the ordinary differential equation \eqref{ode}. Let $p_{-}$ and $p_{+}$ be the smallest and the largest roots of $\sigma$ respectively. 
For the lower bound,  define for $0<p<p_-$
\begin{equation*}
g(p) := \int_{0}^{p} \frac{- d z}{\sigma(z)}.
\end{equation*}
Then, by assumptions $\mathrm{(i)}$ and $\mathrm{(iii)}$, we have that $g$ is continuous, strictly monotonic increasing on $(0,p_{-})$ and $g(p) \to 0$ as $p \to 0{+}$. Let
\begin{equation}\label{e:P1}
P_{1}(t) := \min \Big\{ p_{-}, g^{-1}(t) \Big\}.
\end{equation}
If $p_{-} \leq p_{0},$ then $p_{-} \leq p(t)$ for all $t \geq 0$ so that $P_{1}(t) \leq p(t).$ If, on the other hand,  $0 \leq p_{0} < p_{-},$ then $\sigma(p(t))<0$ for all $t>0$, since roots of $\sigma$ are rest points. Hence
\[t=\int_{p_{0}}^{p(t)} \frac{- d z}{\sigma(z)}\,<\,\int_{0}^{p(t)} \frac{- d z}{\sigma(z)}\,=\,g(p).\]
Therefore  $p(t) \geq g^{-1}(t)$, giving $p(t) \geq P_{1}(t)$ in this case too.

For the upper bound define for $p>p_+$
\begin{equation*}
h(p):= \int_{p}^{\infty} \frac{d z}{\sigma(z)}.
\end{equation*}
Then, by \eqref{e:iii}, $h(p)$ is well defined and by assumptions $\mathrm{(i)}$ and $\mathrm{(ii)},$ it is continuous, strictly monotonic decreasing on $(p_{+}, \infty)$ and $h(p) \to 0$ as $p \to \infty$. Let
\begin{equation}\label{e:P2}
P_{2}(t) := \max \Big\{p_{+}+1, h^{-1}(t) \Big\}.
\end{equation}
Note that if
 $p_{0} \leq p_{+}+1$  then  $p(t) \leq p_{+}+1$  for all $t  \geq 0$. Hence  $p(t) \leq P_{2}(t)$ if $p_{0} \leq p_{+}+1.$ If, on the other hand, $p_{0} > p_{+}+1$, then $\sigma(p(t))>0$ for all $t>0$.  Hence
\[ 
\int_{p(t)}^{p_{0}}\,\frac{d z}{\sigma(z)}\,=\,t,\]
and so
\begin{equation*}
  t< \int_{p(t)}^{p_{0}}\,\frac{d z}{\sigma(z)}+
  \int_{p_{0}}^{\infty}\,\frac{d z}{\sigma(z)}=h(p(t)),
\end{equation*}
so that  $p(t) \leq h^{-1}(t).$ Thus $p(t) \leq P_{2}(t)$ also in this case. 

Thus if \eqref{e:iii} holds then 
\begin{equation}\label{e:universalbdd}
p(x,t) \in [P_{1}(t), P_{2}(t)]\quad \text{for}\quad t > 0,
\end{equation}
where $P_{1}(t)$ and $P_{2}(t)$ are given by (\ref{e:P1}) and (\ref{e:P2}) respectively.  Therefore $p(\cdot,t) \in L^{\infty}(0,1)$ for $t>0$. Moreover on any interval $[T_{1},T_{2}] \subset (0,\infty),$ $\sigma(p(x,t))$ is bounded, which implies that $p_{t}(x,t)$ is bounded and that there exists a constant $C$ such that
\[|p(x,t) - p(x,s) | \leq C\,|t - s|\,\,\,\text{for all}\,\,\,t,s \in [T_{1},T_{2}].\]
 Hence $p \colon (0,\infty) \to L^{\infty}(0,1)$ is continuous.

For a.e. $x\in(0,1)$ we have that $p(x,t)\rightarrow \bar p(x)$ for some root $\bar p(x)$ of $\sigma$. Since the roots of $\sigma$ are bounded, and since  $\bar{p}(\cdot)$ is measurable as it is the almost everywhere limit of a sequence of measurable functions, we have that $\bar p\in L^\infty(0,1)$. By \eqref{bound1} and dominated convergence we have $p(\cdot,t)\rightarrow \bar p$ in $L^1(0,1)$.  

Finally, note that for a.e. $x\in(0,1)$ we have 
\[\frac{\partial}{\partial t}W(p(x,t))\,=\,-\big(\sigma(p(x,t))\big)^{2}\,\leq\,0.\]
Therefore  (\ref{e:conven}) follows by monotone convergence   (whether or not $\int_0^1W(p_0(x))\,dx<\infty$). 
\end{proof}

\section{Displacement boundary conditions}\label{s:both-ends-fixed}

In this section we consider \eqref{e:qseqn} with displacement boundary conditions \eqref{bc1}, which as we have seen in the introduction is equivalent to   the problem 
\begin{eqnarray}
  p_{t}(x,t) &=& - \sigma(p(x,t))\,+\, \int_{0}^{1}\,\sigma(p(y,t))\,dy,  \label{peqn} \\
 (P)\hspace{.5in} p(x,0)&=&p_{0}(x) \geq 0 \:\;\;\text{a.e.} \,\,x \in (0,1), {}\nonumber  \\
 &&\hspace{-.6in}\int_{0}^{1}\,p(x,t)\,dx = \mu > 0. \nonumber {} 
\end{eqnarray}

\begin{definition}\label{d:solnP}
We say that $p=p(x,t)$ is a solution of the initial boundary-value problem $(P)$ on $(0,1) \times [0,T]$ if:
\begin{eqnarray*}
&\mathrm{(i)}& p \in C([0,T];L^{2}(0,1)),\,\,\text{with}\,\,\,p(\cdot,0) = p_{0}(\cdot),\\
&\mathrm{(ii)}& p(x,t) > 0\,\,\,\text{for a.e.}\,\,(x,t) \in (0,1) \times [0, T],\, \sigma(p(\cdot,t)) \in L^{1}(0,1)\,\,\,\text{for a.e.}\,\,t \in [0,T]\,\,\,\text{and}\\
& &  \sigma(p(x,t)) - \int_{0}^{1} \sigma(p(y,t))\,dy \in L^{1}(\tau, T)\,\,\,\text{for a.e.}\,\,x \in (0,1)\,\,\,\text{and all}\,\,\,\tau > 0,  \\
&\mathrm{(iii)}& p(x,t) = p(x,s) - \displaystyle\int_{s}^{t} \Big(\sigma(p(x,\tau)) - \int_{0}^{1} \sigma(p(y,\tau)) dy \Big) d\tau\,\,\, \text{for a.e.}\,\,x \in (0,1), \mbox{ for all }\,\\ & &s,t > 0.
\end{eqnarray*}
\end{definition}

\subsection{Assumptions}
We make the following general assumptions on the elastic stress.\vspace{.05in}
 
 \noindent (H1) $\sigma$ is locally Lipschitz continuous.\\
 (H2) $\sigma(p) \to -\infty$ as $p \to 0{+}$.\\
(H3) $W(p)$ is convex for $0\leq p<\theta$, for some $\theta>0$.\vspace{.05in}

\subsection{Finite-dimensional initial data}\label{s:fid}

In this section we study problem $(P)$ when the initial data is positive and takes finitely many values. That is, we have 
\begin{equation}\label{e:fid}
p_{0}(x)=\sum_{i=1}^{N}\,p_{0i}\,\chi_{E_{i}}(x),\quad p_{0i} > 0,
\end{equation}
where $\{E_{i}\}_{i = 1}^{N}$ is a partition of $(0,1)$ into disjoint measurable sets $E_{i}$ with $\mathrm{meas}(E_{i}) =\lambda_{i} > 0$ and $\sum_{i}\,\lambda_{i}=1$. The corresponding solution depends on the partition chosen, in particular on $N.$ We  denote this dependence by writing $p_{N}(x,t).$

\begin{theorem}\label{t:exisfid}
Assume {\rm (H1)-(H3)} hold and that the initial data $p_{0}$   is of the form $(\ref{e:fid})$. Then there exists a unique global solution $p_{N}(x,t)$ to $(P)$ given by
\begin{equation}\label{e:solnfid}
p_{N}(x,t)=\sum_{i=1}^{N}\,p_{i}(t)\,\chi_{E_{i}}(x),\,\,\,\,\,\,p_{i}(0)=p_{0i} > 0.
\end{equation}
\end{theorem}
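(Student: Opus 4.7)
The plan is to insert the ansatz \eqref{e:solnfid} into \eqref{peqn}, reducing the PDE to a coupled system of $N$ ODEs for the values $(p_1(t),\dots,p_N(t))$, and to carry out all of the analysis at the ODE level; uniqueness in the full class of Definition~\ref{d:solnP} will then follow from a scalar ODE uniqueness argument applied slicewise in $x$.

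Substituting $p_N(x,t)=\sum_i p_i(t)\chi_{E_i}(x)$ into \eqref{peqn} and equating the coefficients of the $\chi_{E_i}(x)$ (linearly independent because the $E_i$ are disjoint with positive measure) yields
\[
\dot p_i(t)=-\sigma(p_i(t))+\sum_{j=1}^N\lambda_j\sigma(p_j(t)),\qquad p_i(0)=p_{0i}>0,\quad i=1,\dots,N.
\]
By (H1) the right-hand side is locally Lipschitz on $(0,\infty)^N$, so Picard--Lindel\"of produces a unique maximal $C^1$ solution on some interval $[0,T^{\ast})$ with values in $(0,\infty)^N$. Summing the equations weighted by $\lambda_i$ annihilates the nonlocal term and gives the conservation law $\sum_i\lambda_i p_i(t)\equiv\mu$, so positivity of the $p_i$ yields the a priori upper bound $p_i(t)\le\mu/\lambda_i\le B:=\mu/\lambda_{\min}$ on $[0,T^{\ast})$.

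To exclude $T^{\ast}<\infty$ I would establish a uniform positive lower bound for $m(t):=\min_i p_i(t)$. Set $m^{\ast}:=\min_{p\in[\theta,B]}\sigma(p)$, which is finite by continuity of $\sigma$ from (H1), and choose $\delta\in(0,\theta)$ with $\sigma(p)\le m^{\ast}$ for all $p\in(0,\delta]$; such $\delta$ exists by (H2). At any time $t$ at which an index $k$ achieves $p_k(t)=m(t)<\theta$, (H3) says $\sigma$ is nondecreasing on $(0,\theta)$, so $\sigma(p_j)\ge\sigma(p_k)$ for every $j$ with $p_j\in(0,\theta)$, while by construction $\sigma(p_j)\ge m^{\ast}$ whenever $p_j\ge\theta$. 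Writing $\Lambda_{>}(t):=\sum_{j:\,p_j\ge\theta}\lambda_j$, these inequalities combine into $\bar\sigma(t)\ge(1-\Lambda_{>}(t))\sigma(p_k)+\Lambda_{>}(t)m^{\ast}$, whence
\[
\dot p_k=-\sigma(p_k)+\bar\sigma(t)\ge\Lambda_{>}(t)\bigl(-\sigma(p_k)+m^{\ast}\bigr).
\]
As soon as $p_k\le\delta$ the right-hand side is nonnegative, so the Dini derivative satisfies $D^{+}m(t)\ge 0$ at every time with $m(t)\le\delta$, and a standard weak-monotonicity argument yields $m(t)\ge\min(\delta,\min_i p_{0i})>0$ on $[0,T^{\ast})$. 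Combined with the upper bound this keeps the trajectory in a compact subset of $(0,\infty)^N$ on any compact time interval, forcing $T^{\ast}=\infty$. The main obstacle is precisely this step: the nonlocal coupling through $\bar\sigma(t)$ and the possibility of several coordinates dropping simultaneously have to be controlled together, and (H3) is exactly what provides the ordering inequality $\sigma(p_j)\ge\sigma(p_k)$ in the dangerous range $(0,\theta)$ where $\sigma$ blows down.

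The natural Lyapunov functional $E(t):=\sum_i\lambda_i W(p_i(t))$ satisfies
\[
\dot E(t)=\sum_i\lambda_i\sigma(p_i)\dot p_i=-\Bigl(\sum_i\lambda_i\sigma(p_i)^2-\bar\sigma(t)^2\Bigr)\le 0
\]
by Cauchy--Schwarz, which provides additional control useful later. Finally, for uniqueness in the sense of Definition~\ref{d:solnP}, let $p$ be any solution of $(P)$ with initial data \eqref{e:fid} and set $\bar\sigma(t):=\int_0^1\sigma(p(y,t))\,dy$. For a.e.\ $x$ the map $t\mapsto p(x,t)$ satisfies the scalar ODE $\dot q=-\sigma(q)+\bar\sigma(t)$ with $q(0)=p_0(x)$, and local Lipschitzness of $\sigma$ forces $p(x,t)=p(x',t)$ whenever $x,x'\in E_i$, since these points share the common initial value $p_{0i}$. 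Hence $p$ has the piecewise-constant form \eqref{e:solnfid}, its coefficients satisfy the ODE system above, and ODE uniqueness completes the proof.
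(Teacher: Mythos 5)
Your reduction to the ODE system \eqref{e:fidode}, the use of Picard--Lindel\"of, and the conservation law $\sum_j\lambda_jp_j(t)=\mu$ giving the upper bound all match the paper. Where you genuinely diverge is in ruling out $\min_ip_i(t)\to0$ in finite time: the paper first proves an ordering property ($p_m(0)<p_n(0)$ implies $p_m(t)<p_n(t)$, via backward uniqueness) and then derives a contradiction at $t_{\max}$ by showing $\dot p_i(\bar t)>0$ for the index with smallest initial value, using that the largest component satisfies $p_k(t)\geq\mu>\theta$ (after normalising $\theta<\mu$). You instead run a direct barrier argument on $m(t)=\min_ip_i(t)$, splitting $\bar\sigma(t)$ according to whether $p_j<\theta$ (where (H3) gives $\sigma(p_j)\geq\sigma(p_k)$) or $p_j\in[\theta,B]$ (where $\sigma(p_j)\geq m^{\ast}$), and concluding $D^+m(t)\geq0$ whenever $m(t)\leq\delta$. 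This is correct and buys something the paper's proof does not: an explicit, quantitative lower bound $m(t)\geq\min(\delta,\min_ip_{0i})$ on the whole interval of existence (dependent on the data and on $\lambda_{\min}$, so not the universal bound of Theorem~\ref{t:lowerbound}, but enough for globality), and it dispenses with the ordering lemma entirely. One minor edge case: if $\theta>B$ your interval $[\theta,B]$ is empty, but then $\Lambda_>(t)\equiv0$ and the argument degenerates harmlessly to $\dot p_k\geq0$; alternatively normalise $\theta<\mu$ as the paper does. Finally, your slicewise argument for uniqueness within the full class of Definition~\ref{d:solnP} is more ambitious than the paper's proof here (which only invokes ODE uniqueness for the system \eqref{e:fidode} and defers general uniqueness to Theorem~\ref{t:exisuniq}); as sketched it glosses over the facts that the integral identity in Definition~\ref{d:solnP}(iii) is only asserted for $s,t>0$, so matching initial data requires passing to the limit $s\to0+$ through the $L^2$-continuity, and that Gr\"onwall between two slices needs the range of the comparison slice to stay in a compact subset of $(0,\infty)$, which Definition~\ref{d:solnP} alone does not supply.
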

\begin{proof}
Substituting $p_{N}(x,t)$ into problem $(P)$ gives
\begin{subequations}\label{e:fidode}
\begin{align}
& \dot{p_{i}}(t)=-\sigma(p_{i}(t))\,+\,\sum_{j=1}^{N}\,\lambda_{j}\,\sigma(p_{j}(t)),\,\,\,\,1 \leq i \leq N \label{e:fidodes} \\
& p_{i}(0) = p_{0i} > 0 ,\,\,\,\,\,\,\sum_{j=1}^{N} \lambda_{j} p_{j}(t) = \mu. \label{e:fidodeibc}
\end{align} 
\end{subequations}
Note that (\ref{e:fidodes}) is a finite system of ordinary differential equations with locally Lipschitz right-hand sides for $p_{i} >0.$ Hence, by the Picard-Lindel\"of Theorem (see, for example, \cite{Hartman}), it possesses a unique solution $p_{i}(t) \in C([0,T]), 1\leq i\leq N,$ for  sufficiently small $T$. This proves that $p_{N}(x,t)$ is a well-defined, unique local solution to problem $(P)$ with initial data satisfying (\ref{e:fid}). 

On the maximal interval of existence $[0, t_{\max})$ we have $\sum \lambda_{j} p_{j}(t) = \sum \lambda_{j} p_{j}(0)$ and so each $p_{j}(t)$ is uniformly bounded. Furthermore, if $p_{m}(0) < p_{n}(0)$ then $p_{m}(t) < p_{n}(t)$ for $0 \leq t < t_{\max},$ since otherwise there would be some $s$ with $p_{m}(s) = p_{n}(s)$ and we can solve the equation
\[\dot{q} = - \sigma(q) + c(t),\]
with $c(t)=\sum_{j=1}^{N}\,\lambda_{j}\,\sigma(p_{j}(t))$, 
backwards in time to get a contradiction. Assume $t_{\max} < \infty$. Then by standard properties of ordinary differential equations $\min_j p_j(t)\rightarrow 0$ as $t\rightarrow t_{\max}$. Let $i,k$ be such that  $p_{0i}\leq p_{0j}\leq p_{0k}$ for all $j$. Then, by the above ordering property, 
$$p_i(t)\leq p_j(t)\leq p_k(t) \;\;\mbox{for all }j\mbox{ and }t\in [0,t_{\max}).$$
Therefore $p_i(t)\rightarrow 0$ as $t\rightarrow t_{\max}$. Since $\mu>0$ we may assume without loss of generality that $\theta<\mu$. So $p_k(t)\geq\sum_{j=1}^N\lambda_jp_j(t)=\mu> \theta$. Since $p_j(t)$ is uniformly bounded  there is a constant $K\in\mathbb R$ such that  $\sigma(p_j(t))> K$ whenever $p_j(t)\geq\theta$. For $t$ sufficiently close to $t_{\max}$ we  have that $K\geq \sigma(p_i(t))$. For such $t$ either $p_j(t)< \theta$, in which case by (H3) we have that  $\sigma(p_j(t))-\sigma(p_i(t))\geq 0$, or $p_j(t)\geq \theta$, when  $\sigma(p_j(t))-\sigma(p_i(t))> K-\sigma(p_i(t))\geq 0$. Hence, since $p_k(t)>\theta$, 
$$\dot p_i(t)=\sum_{j=1}^N\lambda_j(\sigma(p_j(t))-\sigma(p_i(t)))>0,$$ contradicting $p_i(t)\rightarrow 0$. 
\end{proof}

\subsubsection{The Lower Bound}\label{s:lb}

In this subsection we prove that, independently of the initial data and $N$, $p_{N}(x,t)$ is instantaneously  bounded away from zero. We make the following additional assumptions, the first of which strengthens (H3):  \vspace{.05in}

\noindent (L1) There exists a constant $\alpha$ such that $\sigma'(p) \geq \alpha > 0 $ for $0\leq p < \theta$, for some $\theta>0$.\\
(L2) There exists a constant $c$ such that $\displaystyle \frac{\sigma(p)}{p} \geq c > 0$ for $p > 1/\theta$.
 \vspace{.05in}

\begin{proposition}\label{p:lb}
Assume that {\rm (H1), (H2)},  {\rm (L1)} and {\rm(L2)} hold.
Then there exist positive constants $C$ and $\varepsilon_{0}$  such that
\[\frac{\sigma(p)-\sigma(\delta)}{p-\delta}\,>\,C\,\,\,\,\,\text{for all}\,\,\,\,0< p \neq \delta,\,\,0<\delta \leq \varepsilon_{0}.\]
\end{proposition}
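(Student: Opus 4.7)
The plan is to establish the bound by partitioning the range of $p$ according to three regimes determined by $\theta$, making use of a different hypothesis in each regime. By shrinking $\theta$ if necessary, I may assume the $\theta$ in (L1) and (L2) is the same and that $\theta < 1$, so that $\theta < 1/\theta$. I will then choose $\varepsilon_{0} \in (0,\theta)$ at the end, small enough to make all three regime estimates work simultaneously.

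First, in the regime $0 < p < \theta$, both $p$ and $\delta$ lie in $[0,\theta)$, where (L1) applies. Since $\sigma$ is locally Lipschitz with $\sigma'\geq\alpha$ a.e.\ on this interval, integration gives $\sigma(p)-\sigma(\delta)\geq\alpha(p-\delta)$ whenever $p>\delta$, and the reverse inequality when $p<\delta$; either way the difference quotient is at least $\alpha$. Next, in the intermediate regime $\theta\leq p\leq 1/\theta$, the function $\sigma$ is continuous on the compact interval $[\theta,1/\theta]$ by (H1), hence bounded below by some $m\in\mathbb{R}$. Since $0<p-\delta\leq 1/\theta$, I obtain
\[
\frac{\sigma(p)-\sigma(\delta)}{p-\delta}\,\geq\,\theta\bigl(m-\sigma(\delta)\bigr),
\]
and by (H2) the right-hand side tends to $+\infty$ as $\delta\to 0+$. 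Thus, given any target constant, this regime can be forced to exceed it by choosing $\varepsilon_{0}$ small.

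In the remaining regime $p>1/\theta$, assumption (L2) gives $\sigma(p)\geq cp$. Setting $C_{0}=c/2$, the inequality $\sigma(p)-\sigma(\delta)>C_{0}(p-\delta)$ reduces to $\tfrac{c}{2}p+\tfrac{c}{2}\delta-\sigma(\delta)>0$, which is clear as soon as $\sigma(\delta)<0$, and this holds for $\delta$ small enough by (H2). So in this regime the difference quotient strictly exceeds $c/2$. Choosing $C=\tfrac{1}{2}\min(\alpha,c)$ and $\varepsilon_{0}$ small enough that the intermediate regime bound $\theta(m-\sigma(\delta))$ exceeds $C$ for all $\delta\in(0,\varepsilon_{0}]$, the three regime estimates glue together to give the desired strict inequality uniformly over all $0<p\neq\delta$ and $0<\delta\leq\varepsilon_{0}$.

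I do not anticipate a serious obstacle: the three hypotheses (L1), (H2), (L2) are each tailor-made for exactly one of the three regimes, and the only real care needed is to verify that the intermediate regime — where neither monotonicity from (L1) nor linear growth from (L2) is directly available — is rescued by the blow-up $\sigma(\delta)\to -\infty$. This last step is the only place where the smallness of $\varepsilon_{0}$ is genuinely used (beyond requiring $\varepsilon_{0}<\theta$ so that $\delta$ is in the monotone range).
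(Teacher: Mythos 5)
Your proof is correct, and it rests on the same three-regime case analysis as the paper's: small $p$ handled by (L1), intermediate $p$ by (H1) together with the blow-up of $\sigma(\delta)$ from (H2), large $p$ by (L2). The difference is in the packaging. The paper argues by contradiction: it supposes the bound fails for $C=\varepsilon_0=\frac{1}{j}$ for every $j$, extracts sequences $p_j\neq\delta_j\leq\frac{1}{j}$ whose difference quotients are at most $\frac{1}{j}$, passes to a subsequence with $p_j\to p_\infty\in[0,\infty]$, and splits into the cases $p_\infty=0$, $0<p_\infty<\infty$, $p_\infty=\infty$ --- which correspond exactly to your three regimes. Your direct version buys something the compactness argument does not: an explicit constant $C=\tfrac12\min(\alpha,c)$ and, in principle, an explicit $\varepsilon_0$ determined by where $\sigma(\delta)$ falls below $\min(m,0)$ and where $\theta\bigl(m-\sigma(\delta)\bigr)$ exceeds $C$. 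The one small point worth writing out in your intermediate regime is that you need $m-\sigma(\delta)>0$ before you may replace $\frac{1}{p-\delta}$ by its lower bound $\theta$ without reversing the inequality; this is automatic once $\varepsilon_0$ is chosen as you prescribe, so it is a presentational remark rather than a gap.
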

\begin{proof}
If the assertion was false, then it would in particular be false for $C = \varepsilon_{0} = \frac{1}{j}$ for all $j,$ and there would exist sequences $p_{j} \neq \delta_j \leq \frac{1}{j}$ such that
\begin{equation}\label{e:contrlb}
\frac{\sigma(p_{j})-\sigma(\delta_j)}{p_{j}-\delta_j}\,\leq\,\frac{1}{j}\,\,\,\,\,\text{for all}\,\,\,\,j.
\end{equation}
We can suppose that $p_{j}\,\rightarrow\,p_{\infty}\,\in\,[0,\infty]$ as $j\rightarrow \infty.$ Then we need to check three cases separately.\\
\indent (i)\,\,$p_{\infty}=0:$\,\,\,In this case by (L1) we have
\[\frac{\sigma(p_{j})-\sigma(\delta_j)}{p_{j}-\delta_j} \geq \alpha > 0\,\,\,\text{for all}\,\,\,j,\]
contradicting (\ref{e:contrlb}).\\
\indent (ii)\,\,$0 < \,p_{\infty}\,< \infty:$\,\,\,In this case $\sigma(p_{j})$ also stays finite by assumption (H1). Therefore, by (H2) we get
\[\frac{\sigma(p_{j})-\sigma(\delta_j)}{p_{j}-\delta_j}\rightarrow\,\infty\,\,\,\text{as}\,\,\,j \to \infty,\]
contradicting (\ref{e:contrlb}) again.\\
\indent (iii)\,\,$p_{\infty} =\,\infty:$\,\,\,In this case by (H2) and (L2) we immediately obtain
\[\frac{\sigma(p_{j})-\sigma(\delta_j)}{p_{j}-\delta_j}\geq \frac{\sigma(p_{j})}{p_{j}\left(1- \displaystyle\frac{\delta_j}{p_{j}}\right)} \geq c > 0\,\,\,\text{as}\,\,\,j \rightarrow \infty,\] which contradicts (\ref{e:contrlb}). 
\end{proof}

We now prove existence of a global lower bound.

\begin{theorem}\label{t:lowerbound}
Assume {\rm (H1), (H2), (L1)} and {\rm (L2)} hold. Then, there exists a continuous, nondecreasing $\epsilon(t)$, independent of $N$, with $\epsilon(0) = 0$ and $0 < \epsilon(t) < \mu$ for $t > 0,$ such that for any solution $p_N(x,t)$ to problem $(P)$ of the form \eqref{e:solnfid} we have $p_{i}(t) > \epsilon(t)$ for all $i$ and all $t > 0$.
\end{theorem}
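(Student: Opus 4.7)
The plan is to control the pointwise minimum $m(t) := \min_{1 \le i \le N} p_i(t)$ using Proposition \ref{p:lb}. Let $C$ and $\varepsilon_0$ be the constants from Proposition \ref{p:lb}; by shrinking $\varepsilon_0$ we may assume $\varepsilon_0 < \mu$, and set $\kappa := C(\mu - \varepsilon_0) > 0$. None of these constants depends on $N$ or on the initial data. At any time $t$ with $m(t) \le \varepsilon_0$, pick any index $i$ realising the minimum, so $p_i(t) = m(t) \le \varepsilon_0$ and $p_j(t) \ge p_i(t)$ for every $j$. Applying Proposition \ref{p:lb} with $\delta = p_i(t)$ and $p = p_j(t)$ (the case $p_j = p_i$ being the trivial $0 \ge 0$) yields $\sigma(p_j(t)) - \sigma(p_i(t)) \ge C(p_j(t) - p_i(t))$ for every $j$. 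Substituting into \eqref{e:fidodes} and using $\sum_j \lambda_j p_j = \mu$,
\[
\dot p_i(t) \;=\; \sum_{j=1}^{N} \lambda_j\bigl(\sigma(p_j(t)) - \sigma(p_i(t))\bigr) \;\ge\; C\bigl(\mu - p_i(t)\bigr) \;\ge\; \kappa.
\]

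Passing this bound to $m$ takes a little care, because the active set $I(t) := \{k : p_k(t) = m(t)\}$ can change in time. However $m$ is the minimum of finitely many $C^1$ functions, so it is Lipschitz and its right derivative exists everywhere with $m'(t{+}) = \min_{i \in I(t)} \dot p_i(t)$; combined with the previous step, $m'(t{+}) \ge \kappa$ at every $t$ where $m(t) \le \varepsilon_0$. The elementary fact that a continuous function whose right derivative is at least $\kappa$ throughout an interval has all increments bounded below by $\kappa$ times the length then yields
\[
m(t_2) - m(t_1) \;\ge\; \kappa\,(t_2 - t_1) \qquad \text{whenever } 0 \le t_1 \le t_2 \text{ and } m \le \varepsilon_0 \text{ on } [t_1, t_2].
\]
Starting from $m(0) \ge 0$, this gives $m(t) \ge \kappa t$ up to the first time $m$ reaches $\varepsilon_0$; after that, any hypothetical later $t_2$ with $m(t_2) < \varepsilon_0$ would, setting $t_0 := \sup\{t < t_2 : m(t) = \varepsilon_0\}$, force the contradiction $m(t_2) \ge m(t_0) + \kappa(t_2 - t_0) > \varepsilon_0$ by the same monotonicity on $[t_0, t_2]$. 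Altogether $m(t) \ge \min(\kappa t,\, \varepsilon_0)$ for every $t \ge 0$.

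Setting $\epsilon(t) := \tfrac{1}{2}\min(\kappa t,\,\varepsilon_0)$ then produces a continuous, nondecreasing function with $\epsilon(0) = 0$, $0 < \epsilon(t) \le \varepsilon_0/2 < \mu$ for $t > 0$, and $p_i(t) \ge m(t) > \epsilon(t)$ for every $i$ and every $t > 0$. The main difficulty is the second step: the pointwise ODE inequality is sharp only for indices currently in the active set $I(t)$, so one cannot write a single ODE for $m(t)$; one must instead reason through the right derivative of $m$, and it is essential that $m'(t{+}) \ge \kappa > 0$ strictly at the barrier level $\varepsilon_0$, which is what makes $\{m \ge \varepsilon_0\}$ forward-invariant.
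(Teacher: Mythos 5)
Your proof is correct and follows essentially the same route as the paper: both hinge on Proposition \ref{p:lb} to obtain $\dot p_i \ge C(\mu - p_i)$ at a minimizing index and then run a barrier argument. The only differences are cosmetic --- the paper uses the exact barrier $\mu(1-\exp(-Ct))$ capped at $\epsilon_0$ together with a first-touching contradiction, whereas you use the cruder linear barrier $\kappa t$ and the right derivative of the minimum, which works just as well (and in fact lets you dispense with the paper's auxiliary requirement that $\sigma(p)>\sigma(\epsilon_0)$ for $p>\epsilon_0$).
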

\begin{proof}
Choose $\epsilon_{0}$ sufficiently small so that Proposition \ref{p:lb} holds, and such that $\epsilon_{0} < \theta, \epsilon_{0} < \mu$ and $\sigma(p) > \sigma(\epsilon_{0})$ for all $p > \epsilon_{0}$ (the latter is possible by (H2) and (L2)). Let $t_{0} = \frac{1}{C} \log\left(\frac{\mu}{\mu - \epsilon_{0}}\right) > 0,$ where $C$ is as in Proposition \ref{p:lb}. Define
\begin{displaymath}
\epsilon(t) = 
\left\{\begin{array}{ll}
\mu(1 - \exp(- C t)), \quad  & t \leq t_{0}, \\
\\
\epsilon_{0}, \quad & t > t_{0}.
\end{array}\right.
\end{displaymath}
We know that $p_{i}(0) > 0 = \epsilon(0)$ for all $i$. Suppose that the result is false. Let $\bar{t} > 0$ be the least value of $t$ with $\min_ip_{i}(t) = \epsilon(t).$ Suppose that $p_{i_{1}}(\bar{t}) = \cdots = p_{i_{M}}(\bar{t}) = \epsilon(\bar{t})$ and that $p_j(\bar t)>\varepsilon(\bar t)$ for $j\neq i_r, 1\leq r\leq M$. We have $M < N$ since $\sum \lambda_{j} p_{j}(\bar{t}) = \mu$ and $\epsilon_{0} < \mu.$ Then we have
\[\dot{p}_{i_{1}}(\bar{t}) = \sum_{j = 1}^{N} \lambda_{j} \big(\sigma(p_{j}(\bar{t})) - \sigma(p_{i_{1}}(\bar{t}))\big).\]
Case 1: Assume $\bar{t} \leq t_{0}$ so that $\epsilon(\bar{t}) \leq \epsilon_{0}.$Then,
\begin{eqnarray*}
\dot{p}_{i_{1}}(\bar{t})\,&=&\,\sum_{\underset{j \neq i_{r}}{j = 1}}^{N}\,\lambda_{j}\,\frac{(\sigma(p_{j}(\bar{t}))-\sigma(p_{i_{1}}(\bar{t})))}{p_{j}(\bar{t}) - p_{i_{1}}(\bar{t})}(p_{j}(\bar{t}) - p_{i_{1}}(\bar{t})) \\
 & > & C \,\sum_{j = 1}^{N}\,\lambda_{j}\,(p_{j}(\bar{t})-\epsilon(\bar{t})) \\
 & = & C ( \mu - \epsilon(\bar{t})) =  C \mu \exp(- C t).
 \end{eqnarray*}
 But ${p}_{i_{1}}(t) > \epsilon(t)$ for $0 < t < \bar{t}.$ So $\dot{p}_{i_{1}}(\bar{t}) \leq \dot{\epsilon}(\bar{t}) = \mu C \exp(- C t)$ giving a contradiction. \\
 Case 2: Assume $\bar{t} > t_{0}.$ Then $\epsilon(\bar{t}) = \epsilon_{0}$ and 
 \[\dot{p}_{i_{1}}(\bar{t}) = \sum_{j = 1}^{N} \lambda_{j} \big(\sigma(p_{j}(\bar{t})) - \sigma(\epsilon_{0}))\big).\]
 For $j = i_{1}, \ldots, i_{M}$ we have $p_{j}(\bar{t}) = \epsilon_{0},$ and for $j \neq i_{r}$ we have $p_{j}(\bar{t}) > \epsilon_{0}$ giving $\sigma(p_{j}(\bar{t})) > \sigma (\epsilon_{0})$. Hence $\dot{p}_{i_{1}}(\bar{t}) > 0$. However $p_{i_{1}}(t) \geq \epsilon_{0}$ for $t_{0} \leq t \leq \bar{t}$, which implies $\dot{p}_{i_{1}}(\bar{t}) \leq 0$, giving a contradiction.
\end{proof}

\noindent \emph{ {Weaker Lower Bounds}} \\

It is worth noting that one can obtain weaker bounds under somewhat weaker hypotheses.

\begin{proposition}
Assume that {\rm (H1), (H2), (L1)} hold and that $\sigma$ is bounded from below for large $p$. Then, for $m > 0$ sufficiently small, $p_{0j} \geq m$ for all $j$ implies $p_{j}(t) \geq m$ for all $j$.
\end{proposition}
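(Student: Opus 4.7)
\medskip

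\noindent\textbf{Proof plan.} The plan is to make $m$ so small that, because $\sigma(m)$ is very negative by (H2), the coupling term $\sum_j\lambda_j\sigma(p_j)$ in the ODE (\ref{e:fidodes}) dominates $\sigma(p_{i_1})$ whenever some $p_{i_1}$ threatens to fall to $m$. Concretely, using (H1) (hence boundedness of $\sigma$ on compact subsets of $(0,\infty)$) together with the hypothesis that $\sigma$ is bounded below for large $p$, set
\[
M := \inf_{p \geq \theta} \sigma(p) \in \mathbb{R},
\]
and choose $m\in(0,\min\{\theta,\mu\})$ small enough that (H2) gives $\sigma(m) < M$ and $\sigma(m)<\sigma(\theta)$. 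Note that by (L1), $\sigma$ is strictly increasing on $[0,\theta)$.

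The core argument is a barrier/first-exit argument applied to the ODE system of Theorem~\ref{t:exisfid}. Suppose the conclusion fails, so that there is a first time $\bar t>0$ at which $\min_j p_j(\bar t)=m$; for $t<\bar t$ we have $p_j(t)>m$ for every $j$. Pick $i_1$ with $p_{i_1}(\bar t)=m$. Because $p_{i_1}(t)>m$ just before $\bar t$, one has $\dot p_{i_1}(\bar t)\leq 0$. The goal is to contradict this by showing $\dot p_{i_1}(\bar t)>0$.

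To do so, I would write
\[
\dot p_{i_1}(\bar t) = \sum_{j=1}^{N}\lambda_j\bigl(\sigma(p_j(\bar t))-\sigma(m)\bigr)
\]
and estimate each summand according to the location of $p_j(\bar t)$: if $p_j(\bar t)\in[m,\theta)$, (L1) gives $\sigma(p_j(\bar t))-\sigma(m)\geq\alpha(p_j(\bar t)-m)\geq 0$; if $p_j(\bar t)\geq\theta$, then $\sigma(p_j(\bar t))\geq M>\sigma(m)$. Thus every summand is nonnegative. Mass conservation $\sum_j\lambda_j p_j(\bar t)=\mu>m$ forces the existence of some index $j_0$ with $p_{j_0}(\bar t)>m$, and for that index the corresponding summand is \emph{strictly} positive by the above case analysis. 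Hence $\dot p_{i_1}(\bar t)>0$, the desired contradiction.

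The mildly delicate step is the choice of $m$: one must ensure that $\sigma(m)$ beats the lower bound $M$ of $\sigma$ on $[\theta,\infty)$ \emph{uniformly}, which is where the assumption that $\sigma$ is bounded below for large $p$ (rather than the stronger superlinear bound (L2) used in Theorem~\ref{t:lowerbound}) enters. Everything else is a straightforward first-exit argument, so I do not expect any genuine obstacle; the weakening relative to Theorem~\ref{t:lowerbound} is reflected in the fact that $m$ must be chosen a priori in terms of the data rather than being obtained as an instantaneous lower bound $\epsilon(t)$ independent of the initial data.
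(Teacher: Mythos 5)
Your proof is correct and takes essentially the same route as the paper's: a first-exit argument in which every coupling term $\lambda_j\bigl(\sigma(p_j(\bar t))-\sigma(m)\bigr)$ is shown to be nonnegative, with at least one strictly positive thanks to the conserved mean $\sum_j\lambda_jp_j=\mu>m$. The only difference is organizational: the paper isolates the key fact as the inequality $(\sigma(r)-\sigma(q))(r-q)>0$ for all $q\neq r$ and $r$ sufficiently small, proved by a sequential contradiction argument in three cases, whereas you obtain the same fact constructively by choosing $m$ with $\sigma(m)<\inf_{p\geq\theta}\sigma(p)$ and invoking (L1) below $\theta$ — an equivalent case split using the same three hypotheses.
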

\begin{proof}
Observe first that for any $r$ sufficiently small we have
\begin{equation}
\label{mon}(\sigma(r) - \sigma(q))(r - q) > 0\quad \text{for all}\;\;q \neq r.\end{equation}
This is because otherwise there would exist sequences $r_{j} \to 0$ and $q_{j} \neq r_{j}$ with 
\[(\sigma(r_{j}) - \sigma(q_{j}))(r_{j} - q_{j}) \leq 0\quad \text{for all}\;\;j.\]
 Therefore if $q_{j} \to 0,$ by (L1)  we get a contradiction. Note that   by (H2) $\sigma(r_{j}) \to - \infty$. If $q_{j}$ is bounded and bounded away from zero, then $r_{j} - q_{j}$ stays negative and so does $\sigma(r_{j}) - \sigma(q_{j}),$ and we get a contradiction. If $q_{j} \to \infty$, then by the assumption that $\sigma$ is bounded from below we again get a contradiction. 

Now suppose that $m>0$ is sufficiently small, that $p_j(0)\geq m$ for all $j$, but that there exist $i$ and $\bar t>0$ such that $p_i(\bar t)<m$. Since there must be such a $\bar t$ at which $\min_j p_j$ is strictly decreasing, we may assume that $p_j(\bar t)\geq p_i(\bar t)$ for all $j$ and that $\do p_i(\bar t)\leq 0$.    But
\[\dot{p}_{i}(\bar{t}) =  \sum_{j\in S(\bar t)} \frac{\lambda_{j}}{p_{j}(\bar{t}) - p_i(\bar t)} (\sigma(p_{j}(\bar{t})) - \sigma(p_i(\bar t)))(p_{j}(\bar{t})-p_i(\bar t))),\]
where $S(\bar t)=\{j:p_j(\bar t)> p_i(\bar t)\}$, which is not empty since $\sum_j\lambda_jp_j(\bar t)=\mu$ and $m$ is small. Hence  by \eqref{mon} with $q=p_i(\bar t), r=p_j(\bar t), r\neq q$ we obtain $\dot{p}_{i}(\bar{t}) > 0,$ giving a contradiction.
\end{proof}

\subsubsection{The Upper Bound}\label{s:ub}

In this section we show that, independently of the initial data and $N$, $p_{N}(x,t)$ is instantaneously bounded and stays bounded for all times. We assume that\vspace{.05in}
 
\noindent (U1) $W(p)$ is strictly convex for $p$ sufficiently large.\\
(U2) $\sigma(p) > 0$ for  $p$ sufficiently large, and $\displaystyle\int_{p_{+}+1}^{\infty} \frac{dz}{\sigma(z)} < \infty$, where $p_{+}$ is the largest root of $\sigma$ (which is in fact the assumption (\ref{e:iii}) in Theorem \ref{t:oef}).
 \vspace{.05in}

\begin{lemma}\label{superlemma}
Assume {\rm (H2), (U1)} and {\rm (U2)} hold. Then, 
\begin{equation}\label{e:superlinear}
\frac{\sigma(p)}{p}\,\rightarrow\,\infty\,\,\,\text{as}\,\,\,p \to \infty
\end{equation}
$($so that also {\rm (L2)} holds$)$.
\end{lemma}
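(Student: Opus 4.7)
The plan is to combine the eventual monotonicity of $\sigma$ (from (U1)) with the integrability condition (U2) to force $\sigma$ to grow faster than linearly. First I would fix $p_{*} \geq p_{+}+1$ so large that (U1) gives $\sigma = W'$ strictly increasing on $[p_{*},\infty)$ and (by (U2)) $\sigma(p) > 0$ there. Being monotone and positive for large $p$, the quantity $\sigma_{\infty} := \lim_{p \to \infty}\sigma(p)$ exists in $(0,\infty]$. If $\sigma_{\infty}$ were finite, then $1/\sigma(z) \geq 1/\sigma_{\infty}$ on $[p_{*},\infty)$, and the integral in (U2) would diverge; hence $\sigma(p) \to \infty$.

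Next I would argue by contradiction for the superlinear statement \eqref{e:superlinear}. Suppose $\sigma(p)/p \not\to \infty$. Then there exist $M > 0$ and a sequence $p_{n} \to \infty$ with $\sigma(p_{n}) \leq M p_{n}$. Passing to a subsequence, we may assume $p_{1} \geq 2 p_{*}$ and $p_{n+1} \geq 2 p_{n}$, so the intervals $[p_{n}/2, p_{n}]$ lie in the monotonicity region $[p_{*},\infty)$ and are pairwise disjoint. On each such interval the monotonicity of $\sigma$ gives $\sigma(z) \leq \sigma(p_{n}) \leq M p_{n}$, hence
\[
\int_{p_{n}/2}^{p_{n}} \frac{dz}{\sigma(z)} \geq \frac{p_{n}/2}{M p_{n}} = \frac{1}{2M}.
\]
Summing over $n$ produces $\int_{p_{*}}^{\infty} dz/\sigma(z) = +\infty$, contradicting (U2). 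Therefore $\sigma(p)/p \to \infty$, and in particular (L2) holds for some $c>0$ once $p$ is large enough.

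I do not expect a genuine obstacle here: the only slightly delicate point is aligning the constants so that $[p_{n}/2, p_{n}]$ really sits inside the region where $\sigma$ is strictly increasing and positive, which is handled by throwing away finitely many terms of the sequence. The use of (H2) is tacit only to the extent that it motivates the normalization of $p_{+}$ in (U2); the actual growth bound is driven entirely by (U1) and (U2).
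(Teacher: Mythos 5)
Your proof is correct and uses essentially the same idea as the paper: the key estimate $\int_{p/2}^{p} dz/\sigma(z) \geq p/(2\sigma(p))$, obtained from the eventual monotonicity of $\sigma$ given by (U1), combined with the convergence of the integral in (U2). The paper phrases this directly (the tail of a convergent integral tends to zero, so $p/(2\sigma(p)) \to 0$) rather than via your contradiction argument with disjoint intervals, but the substance is identical.
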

\begin{proof}
First note the existence of $p_{+}$ follows from (H2) and the first part of (U2). By (U1) we have that $\sigma(p) \geq \sigma(z)$ if $p \geq z$ and $z$ is sufficiently large. Hence
\[\int_{\frac{p}{2}}^{p} \frac{1}{\sigma(z)}\,dz \geq \frac{p}{2\,\sigma(p)},\]
and the left-hand side tends to $0$ as $p \to \infty,$ proving the claim.
\end{proof}

\begin{lemma}\label{l:condforE}
Assume {\rm (H1), (H2), (U1)} and {\rm (U2)} hold. Then for all sufficiently large $\gamma > 0$ and $0 < p \leq \gamma$ we have
\begin{equation*}
\frac{\sigma(\gamma)}{2\,\mu} > \frac{\sigma(p)}{p} - \frac{\sigma(\gamma)}{\gamma}.
\end{equation*}
\end{lemma}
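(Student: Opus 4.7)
The plan is to split the range $0<p\leq\gamma$ at a cutoff $p_1$ chosen independently of $\gamma$ with $p_1>2\mu$, and to bound $\sigma(p)/p$ separately on the two subintervals, with both bounds eventually dominated by $\sigma(\gamma)/(2\mu)+\sigma(\gamma)/\gamma$ as $\gamma\to\infty$.

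First I will use (U1) to pick $p_0$ past which $\sigma=W'$ is strictly increasing, and then fix any $p_1>\max(p_0,2\mu)$. On the interval $(0,p_1]$ the function $\sigma(p)/p$ is continuous by (H1) and tends to $-\infty$ as $p\to 0{+}$ by (H2), so
\[ M_0:=\sup_{0<p\leq p_1}\frac{\sigma(p)}{p} \]
is a finite constant independent of $\gamma$.

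On the interval $[p_1,\gamma]$ (assuming $\gamma\geq p_1$, which is harmless since we only need the conclusion for $\gamma$ large), monotonicity of $\sigma$ gives $\sigma(p)\leq \sigma(\gamma)$, hence $\sigma(p)/p\leq \sigma(\gamma)/p_1$. Once $\gamma$ is taken large enough that $\sigma(\gamma)>0$, which is possible by (U2), the choice $p_1>2\mu$ yields
\[ \frac{\sigma(p)}{p}-\frac{\sigma(\gamma)}{\gamma}\leq \sigma(\gamma)\left(\frac{1}{p_1}-\frac{1}{\gamma}\right)<\frac{\sigma(\gamma)}{2\mu}, \]
since $1/p_1<1/(2\mu)$. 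For the remaining range $(0,p_1]$, the bound $\sigma(p)/p\leq M_0$ combined with Lemma~\ref{superlemma} (which forces $\sigma(\gamma)\to\infty$, hence $\sigma(\gamma)/(2\mu)\to\infty$) gives the same inequality once $\gamma$ is sufficiently large.

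I do not foresee any substantial obstacle; the only delicate point is to insist $p_1>2\mu$ so that the coefficient $1/p_1$ produced by monotonicity is strictly smaller than $1/(2\mu)$. This is the single place where the specific factor $2\mu$ in the statement is used, and the rest of the argument is a routine comparison of a fixed constant $M_0$ with the quantity $\sigma(\gamma)/(2\mu)$, which diverges as $\gamma\to\infty$.
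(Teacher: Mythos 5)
Your proof is correct. It differs from the paper's in organization rather than in substance: the paper argues by contradiction, extracting a putative counterexample sequence $(\gamma_j,p_j)$ and splitting into three cases according to whether $p_j\to 0$, $p_j\to k\in(0,\infty)$, or $p_j\to\infty$, whereas you give a direct argument with a fixed cutoff $p_1>\max(p_0,2\mu)$, merging the paper's first two cases into the single observation that $M_0=\sup_{0<p\leq p_1}\sigma(p)/p$ is finite (continuity via (H1) plus $\sigma(p)/p\to-\infty$ as $p\to 0{+}$ via (H2)). The underlying estimates are the same in both treatments: for large $p$ one uses the (U1)-monotonicity to get $\sigma(p)\leq\sigma(\gamma)$, and the factor $2\mu$ enters exactly where $p$ is compared with $2\mu$ (your $1/p_1<1/(2\mu)$ is the contrapositive of the paper's conclusion $p_j<2\mu$ in case (iii)); for bounded $p$ one uses Lemma \ref{superlemma} to make $\sigma(\gamma)/(2\mu)$ (or, in the paper, $-\sigma(\gamma)/\gamma$) dominate. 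Your version has the minor advantage of being constructive — it exhibits an explicit threshold for ``sufficiently large $\gamma$'' in terms of $M_0$ and $p_1$ — at the cost of having to justify finiteness of the supremum $M_0$, which the sequential argument avoids. One cosmetic point: in your final step for $p\in(0,p_1]$ you should note explicitly that $-\sigma(\gamma)/\gamma\leq 0$ once $\sigma(\gamma)>0$, so that the left side to be dominated is at most $M_0$; this is immediate from (U2) and does not affect correctness.
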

\begin{proof}
If the assertion was false, then there would exist sequences $\gamma_{j} \to \infty$ and $p_{j}$ with $0 < p_{j} \leq \gamma_{j}$ satisfying 
\begin{equation}\label{e:contrsuplin}
\frac{\sigma(\gamma_{j})}{2\,\mu} \leq \frac{\sigma(p_{j})}{p_{j}} - \frac{\sigma(\gamma_{j})}{\gamma_{j}}\,\,\,\text{for all}\,\,j.
\end{equation}
 We may assume that $p_j$ converges, possibly to $+\infty$, and so need to look at the following cases:\\
\indent $\mathrm{(i)}\,\, p_{j} \to 0:$\,\,In this case, (H2) and (\ref{e:superlinear})  immediately imply that the right-hand side of (\ref{e:contrsuplin}) goes to $- \infty$ as $j \to \infty.$ On the other hand, by (U2), the left-hand side is positive, giving a contradiction. \\
\indent $\mathrm{(ii)}\,\, p_{j} \to k > 0 :$\,\,In this case, by (H1), we know that $\frac{\sigma(p_{j})}{p_{j}}$ stays bounded as $j \to \infty$ and hence, by (\ref{e:superlinear}), the right-hand side goes to $- \infty.$ Again, by (U2), the left-hand side is positive, giving a contradiction.\\
\indent $\mathrm{(iii)}\,\, p_{j} \to \infty$ : By (U1) and the fact that $p_{j} <  \gamma_{j}$ we obtain
\[ \frac{\sigma(p_{j})}{2 \mu} \leq \frac{\sigma(\gamma_{j})}{2 \mu} \leq \frac{\sigma(p_{j})}{p_{j}} - \frac{\sigma(\gamma_{j})}{\gamma_{j}} < \frac{\sigma(p_{j})}{p_{j}},\]
giving a contradiction.
\end{proof}

We now prove the main result of this subsection which is the existence of a uniform upper bound. 

\begin{theorem}\label{t:upperbound}
Assume {\rm (H1)-(H3), (U1)} and {\rm (U2)} hold.  
Then there exists a continuous, nonincreasing function $E(t)$ for $t>0$, independent of $N$,  with $\lim_{t\rightarrow 0}E(t) = \infty$ and $E(t) > \mu$ for all $t > 0,$ such that for any solution $p_N(x,t)$ to problem $(P)$ of the form \eqref{e:solnfid} we have 
$p_{i}(t) < E(t)$ for all $i$ and all $t > 0.$
\end{theorem}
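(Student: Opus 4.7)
The plan is to bound the maximum $M(t) := \max_{1 \le i \le N} p_i(t)$ uniformly in $N$ and in the initial data by comparison with the scalar ODE $\dot z = -\sigma(z)/4$, whose maximal solution blowing down from $+\infty$ at $t = 0^{+}$ remains finite for every $t > 0$ thanks to the integrability hypothesis in (U2). The passage from the coupled system \eqref{e:fidodes} to this scalar comparison is supplied by Lemma \ref{l:condforE}, and $E(t)$ will essentially be this comparison solution, cut off above some fixed level $\gamma_{1} > \mu$.

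First I would fix $\gamma_0 > 0$ large enough that Lemma \ref{l:condforE} holds on $[\gamma_0,\infty)$, that $\sigma$ is positive and strictly increasing on $[\gamma_0,\infty)$ (from (U1) and the first part of (U2)), and that $\gamma_0 \ge 4\mu$. Whenever $p_k(t) = M(t) \ge \gamma_0$, applying Lemma \ref{l:condforE} pointwise with $\gamma = M(t)$ and $p = p_j(t)$, multiplying by $\lambda_j$, summing, and using $\sum_j \lambda_j p_j(t) = \mu$ gives
\[
\sum_{j=1}^{N} \lambda_j \sigma(p_j(t)) \;<\; \frac{\sigma(M(t))}{2} + \frac{\mu\,\sigma(M(t))}{M(t)}.
\]
Substituting into \eqref{e:fidodes} yields the strict bound $\dot p_k(t) < \sigma(M(t))\bigl(\mu/M(t) - \tfrac12\bigr) \le -\sigma(M(t))/4$, using $M(t) \ge 4\mu$. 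Since $M$ is Lipschitz as the maximum of finitely many locally Lipschitz functions and $\dot M(t) = \max\{\dot p_k(t) : p_k(t) = M(t)\}$ at a.e.\ point of differentiability, I obtain the pointwise differential inequality $\dot M(t) < -\sigma(M(t))/4$ a.e.\ on $\{t : M(t) \ge \gamma_0\}$.

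Next I would construct $E$ explicitly. Set $\gamma_1 := \gamma_0 + 1 > \mu$ and $T_1 := \int_{\gamma_1}^{\infty} 4\,dw/\sigma(w)$, which is finite by (U2). Then $z \mapsto \int_z^{\infty} 4\,dw/\sigma(w)$ is a continuous strictly decreasing bijection of $[\gamma_1,\infty)$ onto $(0,T_1]$; let $z^{*}$ be its inverse, so that $z^{*}$ is continuous, strictly decreasing, $z^{*}(t)\to+\infty$ as $t\to 0^{+}$, and $\dot z^{*} = -\sigma(z^{*})/4$. Define
\[
E(t) := \begin{cases} z^{*}(t), & 0 < t \le T_1, \\ \gamma_1, & t > T_1. \end{cases}
\]
This $E$ is continuous, nonincreasing, blows up as $t \to 0^{+}$, and satisfies $E(t) \ge \gamma_1 > \mu$ for all $t > 0$.

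Finally I would prove $p_i(t) \le M(t) < E(t)$ by contradiction. Since $M(0) = \max_i p_{0i}$ is finite while $E(t) \to \infty$ as $t \to 0^{+}$, we have $M < E$ on a right neighbourhood of $0$. Suppose $t_{*} > 0$ is the least $t$ with $M(t_{*}) = E(t_{*})$. Then $M(t_{*}) \ge \gamma_1 > \gamma_0$, and by continuity $M \ge \gamma_0$ on some interval $[t_{*} - \eta, t_{*}]$. If $t_{*} > T_1$, then $E \equiv \gamma_1$ on $[t_{*} - \eta, t_{*}]$ (taking $\eta$ small) while $\dot M < -\sigma(M)/4 < 0$ a.e., so $M$ is strictly decreasing on this interval, giving $M(t_{*} - \eta) > M(t_{*}) = \gamma_1 = E(t_{*} - \eta)$, contradicting $M < E$ on $(0,t_{*})$. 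If $t_{*} \le T_1$, then $\dot E = -\sigma(E)/4$ there, so $\phi := E - M$ satisfies $\dot\phi > (\sigma(M) - \sigma(E))/4 \ge -L\phi/4$ a.e., where $L$ is a Lipschitz constant for $\sigma$ on the compact interval $[\gamma_0, E(t_{*} - \eta)]$; equivalently, $\bigl(e^{Lt/4}\phi(t)\bigr)' > 0$ a.e., whence $\phi(t_{*}) > \phi(t_{*} - \eta)e^{-L\eta/4} > 0$, contradicting $\phi(t_{*}) = 0$. The main obstacle is this Gronwall-type comparison: since $M$ is only Lipschitz and $E$ has a corner at $T_1$, the strict inequality produced by Lemma \ref{l:condforE} is exactly what provides the room to absorb the exponential weight and close the argument.
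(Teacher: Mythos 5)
Your proposal is correct and follows essentially the same route as the paper: both hinge on Lemma \ref{l:condforE} to convert the coupled system into a scalar differential inequality at the running maximum, build $E$ as a barrier decreasing from $+\infty$ at $t=0^{+}$ (finite by (U2)) and capped at a constant level, and close via a first-touching-time contradiction. The only differences are cosmetic: the paper chooses the barrier ODE $\dot E = -\frac{\sigma(E)}{2E}(E-2\mu)$ to match the derivative bound exactly, so the contradiction is immediate from $\dot p_i(\bar t)\geq\dot E(\bar t)$, whereas your cruder rate $-\sigma/4$ requires the short Gr\"onwall step you supply.
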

\begin{proof}
For $M > 0$ sufficiently large, we define
\begin{equation}\label{e:g(E)}
g(E) := \int_{M}^{E} \frac{d z}{\frac{\sigma(z)}{2 z} (z- 2\,\mu)} .
\end{equation}
Then $g$ is strictly increasing on $[M, \infty)$ and $g(\infty) := t_{0} < \infty.$ Let 
\begin{displaymath}
E(t) = 
\left\{\begin{array}{ll}
g^{-1}(t_{0} - t), \quad  & t \leq t_{0}, \\
\\
M, \quad & t > t_{0}.
\end{array}\right.
\end{displaymath}
Suppose that the claim is false. Then, there exists a least $\bar{t} > 0$ with $p_{i}(\bar{t}) = E(\bar{t})$ for some $i$.  Note that $E(t)$ is continuous and nondecreasing, and that $E(t) \geq M$ for all $t > 0 $.
From (\ref{e:fidode}) we obtain
\begin{eqnarray*}
\dot{p}_{i}(\bar{t}) & = & - \sigma(E(\bar{t}))\,+\,\sum_{j = 1}^{N}\,\lambda_{j}\,\sigma(p_{j}(\bar{t})).
\end{eqnarray*}
If $\bar{t} \leq t_{0},$ then by Lemma \ref{l:condforE} we have
\[\frac{\sigma(p_j(\bar{t}))}{p_{j}(\bar{t})} < \frac{\sigma(E(\bar{t}))}{2 \mu} + \frac{\sigma(E(\bar{t}))}{E(\bar{t})}.\]
Therefore,
\begin{eqnarray*}\label{e:tocontE}
\dot{p}_{i}(\bar{t})\,& < & \,- \sigma(E(\bar{t}))\,+\,
\sum_{j=1}^{N} \lambda_{j} p_{j}(\bar{t}) \left(\frac{\sigma(E(\bar{t}))}{2 \mu} + \frac{\sigma(E(\bar{t}))}{E(\bar{t})}\right) \\
& = & - \frac{\sigma(E(\bar{t}))}{2} + \mu \frac{\sigma(E(\bar{t}))}{E(\bar{t})}\\
& = & - \frac{\sigma(E(\bar{t}))}{2 E(\bar{t})}\,(E(\bar{t}) - 2 \mu).
\end{eqnarray*}
Note that $g(E(t)) = t_{0} - t$ for $t \leq t_{0}$. Hence $g'(E(t)) \dot{E}(t) = -1$. That is,
\[\dot{E}(\bar{t}) = - \frac{\sigma(E(\bar{t}))}{2 E(\bar{t})} (E(\bar{t}) - 2 \mu).\]
However, $\dot{p}_{i}(\bar{t}) \geq \dot{E}(\bar{t}),$ giving a contradiction.

If $\bar{t} > t_{0}$ on the other hand, then $E(\bar{t}) = M$ and
\[\dot{p}_{i}(\bar{t}) = \sum_{j = 1}^{N} \lambda_{j} (\sigma(p_{j}(\bar{t})) - \sigma(E(\bar{t}))).\]
Since $M$ is sufficiently large, we have $\sigma(p_{j}(\bar{t})) \leq \sigma(E(\bar{t}))$ for all $j,$ with strict inequality for some $j.$ Hence $\dot{p}_{i}(\bar{t}) < 0.$ However $p_{i}(t) \leq M$ for $t_{0} < t < \bar{t}.$ Therefore $\dot{p}_{i}(\bar{t}) \geq 0,$ giving a contradiction.
\end{proof}

\subsection{General initial data}
We now consider solutions of problem $(P)$ for general nonnegative initial data $p_0\in L^2(0,1)$.
\subsubsection{$\lambda$-convexity}
\label{lambdaconvexity}

We are particularly interested in $\lambda$-convex functionals, which are quadratic perturbations of convex functionals. 
 
\begin{definition}\label{d:lambdaconvex}
Let $K$ be a convex subset of a normed linear space $V$ with norm $\|\cdot\|$. Then a function $\phi:K\rightarrow {\mathbb R}\cup \{+\infty\}$  is $\lambda$-convex if
\begin{equation}\label{e:lambdaconvex}
  v\,\mapsto\,\phi(v)+\frac{\lambda}{2}\|v\|^{2}\,\,\text{is convex for some}\,\,\lambda \geq 0.
\end{equation}
\end{definition}

\noindent We now show that some of our assumptions imply $\lambda$-convexity.
\begin{proposition}\label{p:l-conv}
Assume {\rm (H1), (H3)} and {\rm (U1)} hold. Then $W$ is $\lambda$-convex on $[0,\infty)$ for some real $\lambda\,>\,0.$
\end{proposition}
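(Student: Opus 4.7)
The plan is to characterize $\lambda$-convexity of $W$ through monotonicity of its derivative. Since $\sigma=W'$ is locally Lipschitz on $(0,\infty)$ by (H1), $W$ is differentiable with Lipschitz derivative on compact subintervals of $(0,\infty)$, and $W(p)+\tfrac{\lambda}{2}p^2$ is convex on $[0,\infty)$ if and only if $p\mapsto\sigma(p)+\lambda p$ is nondecreasing on $(0,\infty)$, equivalently $\sigma'(p)\geq -\lambda$ for a.e. $p>0$. So the proof reduces to locating a uniform lower bound on $\sigma'$.

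I would split $(0,\infty)$ into three pieces. First, near $0$: by (H3) the function $W$ is convex on $[0,\theta)$, so $\sigma$ is nondecreasing on $(0,\theta)$ and $\sigma'\geq 0$ a.e. there. Second, at infinity: by (U1) there is some $R\geq\theta$ such that $W$ is strictly convex on $[R,\infty)$, hence $\sigma'\geq 0$ a.e. on $[R,\infty)$. Third, on the compact interval $[\theta,R]\subset(0,\infty)$, (H1) yields a Lipschitz constant $L=L(R,\theta^{-1})$ for $\sigma$, so $|\sigma'(p)|\leq L$ and in particular $\sigma'(p)\geq -L$ a.e. on $[\theta,R]$. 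Combining the three regimes, $\sigma'(p)\geq -L$ for a.e. $p\in(0,\infty)$, and taking any $\lambda\geq L$ strictly positive (e.g. $\lambda=\max\{L,1\}$) gives the required $\lambda$-convexity.

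The one delicate point, and the main obstacle I would flag, is the behavior of $W$ at $p=0$: since (H2) is not assumed here but also not excluded, $W(p)$ may well tend to $+\infty$ as $p\to 0+$. This does not break the argument because Definition~\ref{d:lambdaconvex} already allows extended-real values, (H3) is itself phrased as a convexity statement on $[0,\theta)$ (so $W(0)\in\mathbb{R}\cup\{+\infty\}$ with $W+\tfrac{\lambda}{2}p^2$ lower semicontinuous at $0$), and gluing a convex extended-real function on $[0,\theta)$ to a convex smooth function on $[\theta,\infty)$ with matching derivatives at $\theta$ (which $\sigma$ supplies by continuity) yields a convex function on $[0,\infty)$. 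The verification then reduces to the standard fact that an absolutely continuous function on $(0,\infty)$ with a.e. nondecreasing derivative is convex, applied to $\sigma+\lambda p$.
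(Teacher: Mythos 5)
Your proof is correct and follows essentially the same route as the paper's: both reduce $\lambda$-convexity to monotonicity of $p\mapsto\sigma(p)+\lambda p$, handle the regimes near $0$ and at infinity via (H3) and (U1) respectively, and take $\lambda$ to be the Lipschitz constant of $\sigma$ from (H1) on the remaining compact interval. Your phrasing via the a.e.\ bound $\sigma'\geq -L$ (rather than difference quotients) and your explicit remark about gluing across the three regions and about $W(0)$ possibly being $+\infty$ are minor refinements of the same argument.
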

\begin{proof}
  $W$ is $\lambda$-convex if and only if $z \mapsto W'(z) + \lambda z$ is nondecreasing on $(0,\infty)$ for some $\lambda > 0.$ For any sufficiently small $\theta > 0$, we know by (U1) and (H3) that if $p \leq \theta$ or $p \geq 1/ \theta,$ then $W'(p)$ is nondecreasing. Hence $W'(p)+\lambda\,p$ is nondecreasing for such values of $p$ for any $\lambda > 0.$ If, on the other hand, $p, q \in (\theta, 1/ \theta)$ with $p > q,$ then by (H1) we obtain
\begin{equation*}
 W'(p)+Lp-W'(q)-Lq\,\geq\,0,
\end{equation*}
where $L=L(\theta)> 0$ is the Lipschitz constant for $\sigma$. Choosing $\lambda\,=\,L $ gives the result.
\end{proof}

 We will follow a similar method to that of   \cite{Brezis} for the analysis of the evolution equations associated with monotone operators. Before stating the main result, we prove the following technical lemma using $\lambda$-convexity.

\begin{lemma}\label{l:estimate}
Assume that $W$ is $\lambda$-convex with corresponding $\lambda \geq 0.$ Then, for any $p>0, q>0,$ we have
\[\big(\sigma(p)-\sigma(q)\big)\big(p-q\big)\,\geq \,- \lambda\,\big(p - q)^{2}.\]
\end{lemma}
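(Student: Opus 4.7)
The plan is to unpack the definition of $\lambda$-convexity in the one-dimensional setting where the normed space is just $\mathbb{R}$ and the norm is $|\cdot|$. Since $\sigma = W'$, $\lambda$-convexity of $W$ on $[0,\infty)$ means that $p \mapsto W(p) + \frac{\lambda}{2}p^2$ is convex, which (because this function is differentiable on $(0,\infty)$) is equivalent to its derivative $p \mapsto \sigma(p) + \lambda p$ being nondecreasing on $(0,\infty)$.

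From this monotonicity I would argue by cases. If $p > q > 0$, then $\sigma(p) + \lambda p \geq \sigma(q) + \lambda q$, so $\sigma(p) - \sigma(q) \geq -\lambda(p-q)$; multiplying by the positive quantity $p-q$ yields $(\sigma(p)-\sigma(q))(p-q) \geq -\lambda(p-q)^2$. If instead $0 < p < q$, the same monotonicity gives $\sigma(q) - \sigma(p) \geq -\lambda(q-p)$, i.e.\ $\sigma(p) - \sigma(q) \leq \lambda(q-p)$; multiplying by the negative quantity $p - q$ reverses the inequality and again produces $(\sigma(p)-\sigma(q))(p-q) \geq -\lambda(p-q)^2$. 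The case $p=q$ is trivial since both sides vanish.

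There is no real obstacle here: the only thing that could trip one up is making sure the equivalence ``convex $\Leftrightarrow$ derivative nondecreasing'' is applicable, but since $W$ is continuously differentiable (indeed locally Lipschitz, by (H1)) on $(0,\infty)$, the standard characterization applies directly. Alternatively one could avoid the case split by using the characterization that a $C^1$ convex function $\Phi$ on an interval satisfies $(\Phi'(p)-\Phi'(q))(p-q) \geq 0$ for all $p,q$, applied to $\Phi(p) = W(p) + \frac{\lambda}{2}p^2$, which yields the desired inequality in one line.
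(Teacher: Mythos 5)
Your argument is correct and is essentially the paper's own proof: both rest on the observation that $\lambda$-convexity of $W$ makes $p\mapsto\sigma(p)+\lambda p$ nondecreasing, from which the inequality follows by multiplying through by $p-q$ (the paper simply says ``without loss of generality $p>q$'' where you spell out the symmetric case). No gap.
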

\begin{proof}
Since $W$ is $\lambda$-convex for $\lambda \geq 0,$ we have $W'(p) + \lambda\,p$ is nondecreasing in $p.$ Therefore, without loss of generality  taking $p > q,$ we have
\[W'(p) + \lambda\,p \geq W'(q) + \lambda\,q  \quad \Leftrightarrow \quad (\sigma(p) - \sigma(q))\,(p-q) \geq - \lambda \,(p - q)^{2}\]
as claimed.
\end{proof}

\begin{proposition}\label{p:propagation}
Assume {\rm (H1)-(H3)} and {\rm (U1)} hold, and 
\begin{equation}\label{e:convinid}
p_{0 N}\,\rightarrow\, p_{0}\,\,\,\text{in}\,\,\,L^{2}(0,1)\quad \text{as}\quad N \to \infty,
\end{equation}
where $p_{0N}=\sum_{i=1}^N\lambda^N_i\chi_{E_i^N}$ is of the form \eqref{e:fid}, with corresponding solution $p_N=p_{N}(x,t)$ satisfying $(P)$. Then, there exists a $p=p(x,t)$ with $p(0)=p_0$  such that 
\begin{equation}\label{e:pisin}
p_{N}\,\rightarrow \,p\,\,\,\,\text{in}\,\,\,\,C([0,T];L^{2}(0,1))\,\,\,\,\text{as}\,\,\,\,N \to \infty.
\end{equation}
\end{proposition}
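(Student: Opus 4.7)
The plan is to show that $\{p_N\}$ is a Cauchy sequence in $C([0,T];L^2(0,1))$ by a $\lambda$-convexity estimate in the spirit of Brezis, and then invoke completeness. By Proposition \ref{p:l-conv} the assumptions (H1), (H3), (U1) guarantee that $W$ is $\lambda$-convex for some $\lambda>0$, which is exactly the hypothesis of Lemma \ref{l:estimate}. By Theorem \ref{t:exisfid} each $p_N$ exists globally and stays strictly positive, so we may freely apply that pointwise monotonicity bound along the trajectories.

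Fix $N$ and $M$ and write $q(x,t):=p_N(x,t)-p_M(x,t)$. On a common refinement of the two partitions $\{E_i^N\}$ and $\{E_j^M\}$ both $p_N$ and $p_M$ are finite linear combinations of indicator functions with $C^1$ coefficients in $t$, so $\|q(\cdot,t)\|_{L^2}^2$ is an absolutely continuous function of $t$ and may be differentiated termwise. Using \eqref{peqn} for both solutions,
\begin{equation*}
\tfrac{1}{2}\tfrac{d}{dt}\|q(\cdot,t)\|_{L^2}^2
=-\int_0^1\bigl(\sigma(p_N)-\sigma(p_M)\bigr)\,q\,dx
+\Bigl(\int_0^1 q\,dx\Bigr)\Bigl(\int_0^1\bigl(\sigma(p_N)-\sigma(p_M)\bigr)dy\Bigr).
\end{equation*}
The crucial observation is that, by the mass constraint built into $(P)$, $\int_0^1 p_N(x,t)\,dx=\int_0^1 p_M(x,t)\,dx=\mu$ for all $t\ge 0$, so $\int_0^1 q(x,t)\,dx\equiv 0$ and the nonlocal term vanishes. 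Applying Lemma \ref{l:estimate} pointwise to the remaining integrand gives
\begin{equation*}
\tfrac{1}{2}\tfrac{d}{dt}\|q(\cdot,t)\|_{L^2}^2\le\lambda\|q(\cdot,t)\|_{L^2}^2.
\end{equation*}
Gronwall's inequality then yields $\|p_N(\cdot,t)-p_M(\cdot,t)\|_{L^2}\le e^{\lambda t}\|p_{0N}-p_{0M}\|_{L^2}$ for all $t\in[0,T]$.

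Since $p_{0N}\to p_0$ in $L^2(0,1)$ by hypothesis, $\{p_{0N}\}$ is Cauchy in $L^2$, and the Gronwall estimate above therefore makes $\{p_N\}$ Cauchy in $C([0,T];L^2(0,1))$. By completeness of this Banach space there exists $p\in C([0,T];L^2(0,1))$ with $p_N\to p$ uniformly on $[0,T]$ in $L^2$. Evaluating at $t=0$ and using continuity of evaluation at a point gives $p(0)=\lim_N p_{0N}=p_0$ in $L^2$, as required; since $T>0$ was arbitrary, the limit is in fact defined on $[0,\infty)$ by a standard diagonal argument.

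The main conceptual point, and the only place where something beyond bookkeeping is needed, is the cancellation of the nonlocal coupling term thanks to the preserved mean, combined with the pointwise one-sided Lipschitz bound from $\lambda$-convexity; neither a uniform lower bound nor a uniform upper bound on $p_N$ is required for this step, so the argument goes through under the weaker hypotheses (H1)--(H3) and (U1) without invoking Theorems \ref{t:lowerbound} or \ref{t:upperbound}. The only mild technical care is to justify differentiating $\|p_N-p_M\|_{L^2}^2$ in $t$, which is immediate from the finite-dimensional structure of $p_N$ and $p_M$ on a common refinement of their partitions.
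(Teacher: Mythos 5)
Your proof is correct and follows essentially the same route as the paper: subtract the two equations, observe that the nonlocal term cancels because both solutions preserve the mean $\mu$, apply the one-sided monotonicity estimate from $\lambda$-convexity (Lemma \ref{l:estimate} via Proposition \ref{p:l-conv}), and close with Gronwall and completeness of $C([0,T];L^2(0,1))$. The extra care you take in justifying the differentiation of $\|p_N-p_M\|_{L^2}^2$ via a common refinement of the partitions is a sensible addition, but the argument is otherwise identical to the paper's.
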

\begin{proof}\,\,
Take $p_{N}$ and $p_{M}$ satisfying $(P)$ with corresponding initial data $p_{0N}$ and $p_{0M}.$ Then from \eqref{peqn} we obtain
\begin{eqnarray*}
\big(p_{N}(x,t)-p_{M}(x,t)\big)_{t} &=& -\big(\sigma(p_{N}(x,t))-\sigma(p_{M}(x,t))\big)+ \\
& & \qquad  \quad \quad +\int_{0}^{1}\big(\sigma(p_{N}(y,t))-\sigma(p_{M}(y,t))\big)\,dy.
\end{eqnarray*}
By the boundary conditions, this implies
\begin{eqnarray*}
&   & \frac{1}{2}\frac{d}{dt}\int_{0}^{1}|\,p_{N}(x,t)-p_{M}(x,t)\,|^{2}\,dx = \nonumber \\
&   & \qquad \qquad = -\int_{0}^{1}(\sigma(p_{N})-\sigma(p_{M}))(p_{N}-p_{M})\,dx + \nonumber \\
&   & \qquad \qquad \qquad \qquad +\left(\int_{0}^{1}(p_{N}-p_{M})\,dx \right)\,\left(\int_{0}^{1}(\sigma(p_{N})-\sigma(p_{M}))\,dy\right) \nonumber \\
&  & \qquad \qquad =  -\int_{0}^{1}(\sigma(p_{N})-\sigma(p_{M}))(p_{N}-p_{M})\,dx. \label{e:forsigmabound}
\end{eqnarray*}
Hence by Proposition \ref{p:l-conv} and Lemma \ref{l:estimate} we obtain
\begin{equation*}
\frac{1}{2}\frac{d}{dt}\int_{0}^{1}|p_{N}(x,t)-p_{M}(x,t)|^{2}\, dx \leq \lambda \int_{0}^{1} | p_{N}(x,t) -p_{M}(x,t) |^{2} dx.
\end{equation*}
By Gr\"{o}nwall's inequality   this gives
\begin{equation}\label{e:gronwall}
\int_{0}^{1}|p_{N}(x,t)-p_{M}(x,t)|^{2}\,dx\,\leq\,\exp(2 \lambda t)\,\int_{0}^{1}|p_{0N}(x)-p_{0M}(x)|^{2}\,dx\,.
\end{equation}
By (\ref{e:convinid}) this shows that
$p_{N} $ is a Cauchy sequence in $C([0,T];L^{2}(0,1))$ and so converges to $p $ with $p(0) = p_{0}$ proving that \eqref{e:pisin} holds.
\end{proof}

\begin{theorem}\label{t:exisuniq}
Assume that {\rm (H1)} and {\rm (H2)} hold, $W$ is $\lambda$-convex and 
\begin{equation}\label{e:assposp}
p_{0} \in L^{2}(0,1),\,\,\, p_{0}(x)\,\geq\,0\,\,\,\text{for}\,\,\,\mathrm{a.e.}\,\,x \in (0,1),\,\,\,\int_0^1p_0(x)\,dx=\mu.
\end{equation}
Then there exists a unique solution $p=p(x,t)$ to problem $(P)$, and the map $(p_0,t)\mapsto p(\cdot,t)$ is continuous from $L^2(0,1)\times [0,\infty)$ to $L^2(0,1)$. Furthermore, $p$ satisfies the energy equation
\begin{equation}
\label{energyeq}
\int_0^1W(p(x,t))\,dx=\int_0^1W(p(x,\tau))\,dx-\int_\tau^t\int_0^1p_s^2(x,s)\,dx\,ds
\end{equation}
for any $t\geq\tau>0$.
\end{theorem}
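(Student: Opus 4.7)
The plan is to construct the solution by approximation with finite-valued initial data, exploiting the $\lambda$-convexity contraction estimate of Proposition \ref{p:propagation}, and then use $\lambda$-convexity again to obtain uniqueness, continuous dependence, and the energy identity.

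For existence I would approximate $p_0$ in $L^2(0,1)$ by a sequence $p_{0N}$ of positive step functions of the form \eqref{e:fid} with $\int_0^1 p_{0N}\,dx=\mu$. Theorem \ref{t:exisfid} supplies corresponding global solutions $p_N$, and Proposition \ref{p:propagation} shows that $(p_N)$ is Cauchy in $C([0,T];L^2(0,1))$ for every $T>0$. Let $p$ denote the limit; then $p\in C([0,T];L^2)$, $p(\cdot,0)=p_0$, and the mass constraint $\int_0^1 p(x,t)\,dx=\mu$ passes to the limit. To verify the integral equation in Definition \ref{d:solnP}, I would extract a subsequence with $p_N(x,t)\to p(x,t)$ pointwise a.e., and use the finite-dimensional energy identity below to bound $W(p_N(\cdot,t))$ uniformly in $L^1$ for $t$ bounded away from $0$. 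Since (H2) forces $W(p)\to+\infty$ as $p\to 0+$, this gives $p(x,t)>0$ for a.e.\ $x$ and a.e.\ $t>0$, so $\sigma(p)$ is well defined, and dominated convergence transfers the integral equation from $p_N$ to $p$.

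Uniqueness and continuous dependence both follow from $\lambda$-convexity. If $p$ and $\tilde p$ are two solutions with initial data $p_0$ and $\tilde p_0$, the same computation as in Proposition \ref{p:propagation} gives
\[
\tfrac12\tfrac{d}{dt}\|p-\tilde p\|_{L^2}^2=-\int_0^1\bigl(\sigma(p)-\sigma(\tilde p)\bigr)(p-\tilde p)\,dx\leq \lambda\|p-\tilde p\|_{L^2}^2
\]
by Lemma \ref{l:estimate}, so Gronwall yields $\|p(t)-\tilde p(t)\|_{L^2}\leq e^{\lambda t}\|p_0-\tilde p_0\|_{L^2}$, which furnishes both uniqueness (take $p_0=\tilde p_0$) and continuity of $p_0\mapsto p(\cdot,t)$. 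Joint continuity in $(p_0,t)$ then follows by combining this estimate with the $C([0,T];L^2)$ regularity of each individual solution.

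Finally, for the energy equation, multiplying \eqref{e:fidodes} by $\sigma(p_i(t))$, summing with weights $\lambda_i$, and using $\sum_j\lambda_j p_j(t)=\mu$ yields the finite-dimensional identity
\[
\tfrac{d}{dt}\int_0^1 W(p_N(x,t))\,dx=-\int_0^1 \bigl((p_N)_t(x,t)\bigr)^2\,dx,
\]
so integration from $\tau$ to $t$ produces the analogue of \eqref{energyeq} for $p_N$, and passing to the limit $N\to\infty$ on any $[\tau,t]\subset(0,\infty)$ delivers \eqref{energyeq} for $p$. The main obstacle will be this limit passage away from $t=0$: because $p_0$ need not be positive and $\int_0^1 W(p_0)$ may be infinite, one cannot directly bound $p_N$ uniformly away from $0$ on $[0,T]$, and the argument must rely on the regularizing effect of the $\lambda$-convex gradient flow (in the spirit of Brezis's theory) to put $p(\cdot,t)$ into the domain of $\int_0^1 W(\cdot)$ for every $t>0$, which is exactly why the identity can be asserted only for $t\geq\tau>0$.
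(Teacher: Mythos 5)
Your overall architecture --- approximate $p_0$ by positive step functions, use Proposition \ref{p:propagation} to get a Cauchy sequence in $C([0,T];L^2(0,1))$, and run the Gr\"onwall estimate of Lemma \ref{l:estimate} for uniqueness and continuous dependence --- is exactly the paper's, and those parts are fine. The gap is in the step where you transfer the integral equation from $p_N$ to $p$. First, your positivity argument rests on the claim that (H2) forces $W(p)\to+\infty$ as $p\to 0+$; this is false. (H2) only says $\sigma(p)\to-\infty$, and $W(0)=-\int_0^1\sigma(z)\,dz$ can be finite (take $\sigma(z)\sim -z^{-1/2}$ near $0$). The paper treats $W(0)<\infty$ as a genuine possibility (see Remark \ref{rmk2}(b) and the necessity part of Proposition \ref{p:equivalenceoftheories}), so an $L^1$ bound on $W(p_N(\cdot,t))$ does not give $p>0$. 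Second, and more seriously, ``dominated convergence transfers the integral equation'' has no dominating function: Theorem \ref{t:exisuniq} assumes only (H1), (H2) and $\lambda$-convexity, so the universal bounds of Theorems \ref{t:lowerbound} and \ref{t:upperbound} (which need (L1), (L2), (U1), (U2)) are unavailable, $p_N$ is not uniformly bounded away from $0$ or $\infty$, and $\sigma$ blows up at $0$. Pointwise a.e.\ convergence of $p_N$ therefore does not let you pass to the limit in $\int_s^t\sigma(p_N(x,\tau))\,d\tau$. The paper's proof replaces this by the real work of the theorem: the a priori estimate $\int_0^T t\,\|\dot p_N\|_2^2\,dt\le C(T)$, extraction of a weak $L^2((\tau,T)\times(0,1))$ limit $\chi$ of $\sigma(p_N)-\int_0^1\sigma(p_N)\,dy$, and a Minty-type monotonicity argument based on $\lambda$-convexity --- testing \eqref{e:chi-ineq} against the minimizer $v$ of $I(v)=\int_0^1\bigl(W(v)+\lambda v^2-(2\lambda p+\chi)v\bigr)dx$ subject to $\int_0^1 v\,dx=\mu$ --- which identifies $\chi=\sigma(p)-\int_0^1\sigma(p)\,dx$ and simultaneously yields $p>0$ a.e.\ (because the minimizer $v=h^{-1}(c(t)+2\lambda p+\chi)$ is automatically positive). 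None of this is in your proposal, and appealing to ``the regularizing effect in the spirit of Brezis'' names the phenomenon without supplying the argument.

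A smaller but genuine issue concerns the energy equation. Your finite-dimensional identity $\frac{d}{dt}\int_0^1W(p_N)\,dx=-\|(p_N)_t\|_2^2$ is correct, but passing to the limit in $\int_\tau^t\|(p_N)_s\|_2^2\,ds$ requires strong $L^2(Q)$ convergence of $(p_N)_t$; with only the weak convergence available one gets lower semicontinuity of the norm and hence an energy \emph{inequality}, not the equality \eqref{energyeq}. Likewise $\int_0^1W(p_N(x,t))\,dx\to\int_0^1W(p(x,t))\,dx$ for fixed $t$ needs justification beyond Fatou. The paper sidesteps both difficulties by applying Brezis's chain-rule lemma (\cite[Lemma 3.3, p.73]{Brezis}, applied to $W+\lambda p^2$ and $p^2$) directly to the limit solution $p$.
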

\begin{proof}
(\textit{Existence}) \,\,
  Since $p_0$ is  nonnegative and measurable there exists a nondecreasing sequence $q_{0N}$ of nonnegative measurable functions, each taking only $N$ values, not necessarily distinct, each on sets of positive measure, converging to $p_0$ almost everywhere (cf. \cite{Bartle}). Thus $\int_0^1q_{0N}\,dx\rightarrow \mu$ and so 
$$ p_{0N}=\mu\left(\frac{q_{0N}+\frac{1}{N}}{\int_0^1q_{0N}\,dx+\frac{1}{N}}\right)$$
defines a sequence of strictly positive functions of the form \eqref{e:fid} satisfying $\int_0^1p_{0N}\,dx=\mu$ and $p_{0N}\rightarrow p_0$ in $L^2(0,1)$.
By Proposition \ref{p:propagation} we know that this implies the existence of a $p=p(x,t)$ such that \eqref{e:pisin} is satisfied.
It is therefore enough to show that $p(x,t)$ satisfies the conditions in Definition \ref{d:solnP}. 
From Proposition \ref{p:l-conv} we know that $W$ is $\lambda$-convex for some $\lambda > 0.$ Hence, we have
\[W(\mu) + \frac{\lambda}{2}\,\mu^{2} \geq W(p_{N}) + \frac{\lambda}{2}\,p_{N}^{2} + (\mu - p_{N}) (\sigma(p_{N}) + \lambda\,p_{N}).\]
Integrating with respect to $x$ gives
\[\int_{0}^{1} W(p_{N})\,dx \leq W(\mu) - \frac{\lambda}{2} \mu^{2} + \frac{\lambda}{2} \int_{0}^{1} p_{N}^{2} dx - \frac{d}{dt} \frac{1}{2} \| p_{N} - \mu \|_{2}^{2}.\]
Integrating with respect to time we obtain
\begin{eqnarray*}
\int_{0}^{T} \int_{0}^{1} W(p_{N}) dx\,dt & \leq & T \left(W(\mu) - \frac{\lambda}{2} \mu^{2} \right) + \frac{\lambda}{2} \int_{0}^{T} \int_{0}^{1} p_{N}^{2} dx\,dt  \\ 
& & \quad \quad - \frac{1}{2} \| p_{N}(T) - \mu \|_{2}^{2} + \frac{1}{2} \| p_{N}(0) - \mu \|_{2}^{2}.
\end{eqnarray*}
Since $p_{N}$ is bounded in $C([0,T] ; L^{2}(0,1)),$ for any finite $T$ we have that the right-hand side of the above inequality is bounded. Therefore,
\begin{equation}\label{e:W(pN)-bdd}
\int_{0}^{T} \int_{0}^{1} W(p_{N}) dx\,dt \leq M < \infty \quad (\text{independent of}\,\,N).
\end{equation}
On the other hand, denoting the inner product in $L^{2}(0,1)$ by $(\cdot, \cdot),$ for each $t >0$ we have
\begin{eqnarray*}
t\,\| \dot{p}_{N} \|_{2}^{2} & = & - \big(t\,\dot{p}_{N}, \sigma(p_{N}) - \int_{0}^{1} \sigma(p_{N}) dy \big) \\
& = & - t\,\frac{d}{dt} \int_{0}^{1} W(p_{N})\,dx.
\end{eqnarray*}
Integrating both sides with respect to time gives
\begin{eqnarray*}
\int_{0}^{T} t\,\| \dot{p}_{N} \|_{2}^{2} dt & = & - \int_{0}^{T} \left[ \frac{d}{dt} \left(t\,\int_{0}^{1} W(p_{N}) dx \right) - \int_{0}^{1} W(p_{N}) dx \right] dt \\
& = & \int_{0}^{T} \int_{0}^{1} W(p_{N}) dx\,dt - T\,\int_{0}^{1} W(p_{N}(x,T))\,dx.
\end{eqnarray*}
By \eqref{e:W(pN)-bdd} the first term is bounded. But $\lambda$-convexity implies that $W(p) + \lambda p^{2}$ is bounded from below for sufficiently large $\lambda > 0.$ Hence, the second integral is also bounded from below independently of $N$. As a result we obtain
\begin{equation}\label{e:t-bound}
\int_{0}^{T} t\,\int_{0}^{1} \Big( \sigma(p_{N}) - \int_{0}^{1} \sigma(p_{N}) dy \Big)^{2} dx\,dt \leq C(T) < \infty,
\end{equation}
where $C(T) > 0$ is a constant depending on $T.$ Let us define $Q := (\tau, T) \times (0, 1)$ where $\tau > 0.$ From \eqref{e:t-bound} we immediately have that for an appropriate subsequence, not relabelled, 
\begin{equation}\label{e:weak-L2-conv}
\sigma(p_{N}) - \int_{0}^{1} \sigma(p_{N}) dx \rightharpoonup \chi \,\,\, \,\text{in} \,\,\, \,L^{2}(Q),
\end{equation}
where$\int_0^1\chi(x,t)\,dx=0$ for a.e. $t\in(\tau,T)$. 
Suppose $v \in L^{2}(Q)$ is such that $\int_{0}^{1} v(x, t)\,dx = \mu$ and $\sigma(v(\cdot, t)) \in L^{1}(0,1)$ for a.e. $t \in (\tau, T),$ and $\sigma(v(x,t)) - \int_{0}^{1} \sigma(v(y,t)) dy \in L^{2}(Q)$. Then, by $\lambda$-convexity we have, for each $t\in(\tau,T)$,
\begin{eqnarray*}
- \lambda\,\| p_{N} - v \|_{2}^{2} & \leq & \Big(\sigma(p_{N}) - \sigma(v), p_{N} - v \Big) \\
& = & \Big(\sigma(p_{N}) - \int_{0}^{1} \sigma(p_{N}) dy - \sigma(v) + \int_{0}^{1} \sigma(v) dy, p_{N} - v \Big).
\end{eqnarray*}
Hence,
\[\int_{\tau}^{T} \Big(\sigma(p_{N}) - \int_{0}^{1} \sigma(p_{N})\,dy - \sigma(v) + \int_{0}^{1} \sigma(v)\,dy, p_{N} - v \Big) dt \geq - \lambda \int_{\tau}^{T} \| p_{N} - v \|_{2}^{2} \,dt.\] 
By \eqref{e:pisin} and \eqref{e:weak-L2-conv}, passing to the limit as $N \to \infty$ gives
\begin{equation}\label{e:chi-ineq}
\int_{\tau}^{T} \Big(\chi - \sigma(v) + \int_{0}^{1} \sigma(v) dy, p - v \Big) dt \geq - \lambda \int_{\tau}^{T} \| p - v \|_{2}^{2}\,dt.
\end{equation}
For a.e. $t \in (\tau, T)$ we have that $\chi(\cdot, t) \in L^{2}(0,1)$. Now, we choose $v(x,t)$ to minimize (at this time $t$) the functional
\begin{equation}\label{e:min-v}
I(v) = \int_{0}^{1} \Big(W(v) + \lambda\,v^{2} - ( 2\,\lambda\,p(x,t) + \chi(x,t))v \Big) \,dx
\end{equation}
in  $L^2(0,1)$ subject to $\int_{0}^{1} v(x,t) \,dx = \mu.$ The minimizer exists and is unique since the integrand is strictly convex in $v.$ We claim that the minimizer satisfies the Euler-Lagrange equation
\begin{equation}\label{e:EL}
\sigma(v) + 2\,\lambda\,v - 2\,\lambda\,p - \chi = c(t) \in \mathbb{R}
\end{equation}
and in particular that $v(x,t) > 0$ for a.e. $x \in (0,1).$

First of all, we claim that for each $t$ there is a unique solution $v(\cdot, t)$ to \eqref{e:EL} with $\int_{0}^{1} v(x,t) dx = \mu.$ To see this note that $h(v) = \sigma(v) +2 \lambda v$ satisfies $h(v) \to - \infty$ as $v \to 0{+}$, and that $h$ is strictly increasing with $ h(v) \to \infty$ as $v \to \infty.$ Given $c \in \mathbb{R}$ define 
\[\eta(c) = \int_{0}^{1} h^{-1}(c + 2 \lambda p + \chi)\, dx.\]
Since $h(v) \geq a + \lambda v$ for $v\geq 1$, where $a=\sigma(1)+\lambda$, it follows that  $v \geq h^{-1}(a +  \lambda  v)$  for $v\geq 1$, i.e. $0 \leq h^{-1}(s) \leq \max(1, \frac{s-a}{\lambda})$. Thus $\eta(c)$ is a well-defined real number for each $c$, and $\eta(c)$ is continuous and strictly increasing. By monotone convergence $\eta(c) \to 0$ as $c \to - \infty$ and $\eta(c) \to \infty$ as $c \to \infty.$ Hence there exists a unique $c(t)$ such that $\eta(c(t)) = \mu.$ Setting \[v(x,t) = h^{-1}(c(t) + 2 \lambda p(x,t) + \chi(x,t))\]
we get a unique solution to \eqref{e:EL} satisfying $\int_0^1v(x,t)\,dx=\mu$. If $u \in L^{2}(0,1)$ and $\int_{0}^{1} u \,dx = \mu,$ then
\[W(u) + \lambda u^{2} - (2 \lambda p + \chi) u \geq W(v) + \lambda v^{2} - (2 \lambda p + \chi) v + [\sigma(v) + 2 \lambda v - (2 \lambda p + \chi)] (u - v),\] 
with strict inequality if $u\neq v$, and integrating we get $I(u) \geq I(v).$ Hence $v$ is the unique minimizer. Note that $\eta$ is measurable in $t$, thus so is $c(t)$ and hence $v$ is measurable in $x$ and $t.$ Also testing with $u =\mu$ we have
\[\int_{0}^{1} \left( W(v) + \lambda v^{2} - (2 \lambda p + \chi) v\right)\, dx \leq \int_{0}^{1}\left( W(\mu) + \lambda \mu^{2} - (2 \lambda p + \chi) \mu\right)\,dx .\]
Since $W(v) + \lambda v^{2} \geq b + \frac{\lambda}{4} v^{2}$ for some $b$, this implies that
\[\int_{0}^{1} v^{2}(x,t)\, dx \leq \text{constant} + f(t)\]
where $f \in L^{1}(\tau, T)$ and hence $v \in L^{2}(Q).$ Also $\sigma(v(\cdot, t)) \in L^{2}(0,1)$ for a.e. $t \in (\tau, T)$, $c(t) = \int_{0}^{1} \sigma(v(x,t)) dx$ and $\sigma(v) - \int_{0}^{1} \sigma(v)\, dx \in L^{2}(Q).$ So, from \eqref{e:chi-ineq} and \eqref{e:EL} we get 
\[- \int_{\tau}^{T} 2 \lambda \| p - v \|_{2}^{2}\,dt \geq - \lambda \int_{\tau}^{T} \| p - v \|_{2}^{2} dt.\]
This implies
\begin{equation}\label{e:chi}
v = p\,\,\,\text{and}\,\,\,\chi = \sigma(p) - \int_{0}^{1} \sigma(p)\,dx.
\end{equation}
From the finite-dimensional problem, for any $\psi \in L^{2}(0,1)$, we have
\[\big(p_{N}(\cdot,t), \psi(\cdot) \big) = \big(p_{N}(\cdot, \tau), \psi(\cdot) \big) - \int_{\tau}^{t} \Big(\sigma(p_{N}) - \int_{0}^{1} \sigma(p_{N}) \,dy, \psi(\cdot) \Big) ds.\] 
Passing to the limit as $N \to \infty$ leads to
\[\big(p(\cdot, t), \psi(\cdot) \big) = \big(p(\cdot, \tau), \psi(\cdot) \big) - \int_{\tau}^{t} \Big(\sigma(p) - \int_{0}^{1} \sigma(p) \,dy, \psi (\cdot)\Big) ds\]
so that
\[p(\cdot, t) = p(\cdot, \tau) - \int_{\tau}^{t} \Big(\sigma(p(\cdot, s)) - \int_{0}^{1} \sigma(p(y, s))\,dy\Big) ds\,\,\,\,\text{in}\,\,\,\,L^{2}(0,1).\]
That is, 
\[p(\cdot, t) - \int_{\tau}^{t}\Big(\sigma(p(\cdot,s)) - \int_{0}^{1} \sigma(p(y,s)) dy \Big) ds\,\,\,\text{is independent of}\,\,\,t.\]
Hence for a.e. $x$ we have
\[p(x, t) = p(x, s) - \int_{s}^{t} \Big(\sigma(p(x, \tau)) - \int_{0}^{1} \sigma(p(y, \tau))\,dy\Big) d\tau\] 
for all $s,t>0$. Thus we have existence of a solution.\\

\noindent (\textit{Uniqueness and continuous dependence on initial data}) \,\,
If $p_{1}$ and $p_{2}$ are two solutions,
then for $i = 1,2,$ we get
\[\dot{p}_{i}(x,t)= - \sigma(p_{i}(x,\tau)) + \int_{0}^{1}\sigma(p_{i}(y,\tau))\, dy \,d\tau\quad \text{for a.e.}\,\,x \in (0,1).\]
After subtracting these two equalities and arguing as in the proof of Proposition \ref{p:propagation} we get an inequality similar to (\ref{e:gronwall}) which is
\[\|p_{1}(\cdot,t) - p_{2}(\cdot,t)\|^{2}_{2}\,\leq\,\exp(2 \lambda t)\,\|p_{1}(\cdot,0) - p_{2}(\cdot,0)\|^{2}_{2}.\]
This proves the asserted continuous dependence on the initial data, from which uniqueness follows immediately.\\

\noindent ({\it Energy equation})  By \cite[Lemma 3.3, p.73]{Brezis} (applied to $W + \lambda p^{2}$ and $p^{2}$) the energy equation \eqref{energyeq} holds for all $t, \tau > 0$. 
\end{proof}
\begin{corollary}
\label{semiflow}
Under the assumptions of Theorem \ref{t:exisuniq}, the solution $p(t)=T(t)p_0$ generates a semiflow $\{T(t)\}_{t\geq 0}$ on  the closed subset $$X=\{q\in L^2(0,1):q\geq 0 \;\mbox{ a.e. } x\in(0,1), \int_0^1q\,dx=\mu\}$$ of $L^2(0,1)$, i.e.    $T(t):X\rightarrow X$  for each $t\geq 0$ and satisfies {\rm(i)} $T(0)=\mbox{identity}$, {\rm (ii)} $T(s+t)=T(s)T(t)$ for all $s,t\geq 0$ and {\rm (iii)} $(p_0,t)\mapsto T(t)p_0$ is continuous from $X\times [0,\infty)$ to $X$.
\end{corollary}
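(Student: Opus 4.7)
The plan is to read off the four conditions directly from Theorem \ref{t:exisuniq} and Definition \ref{d:solnP}; essentially no new analysis is required, since the corollary just repackages the content of the theorem as a dynamical system. First I would verify that $X$ is closed in $L^2(0,1)$: if $q_n \to q$ in $L^2$ with $q_n \in X$, an a.e.-convergent subsequence preserves $q \geq 0$ a.e., and continuity of the bounded linear functional $q \mapsto \int_0^1 q\,dx$ passes the mean constraint $\int_0^1 q\,dx = \mu$ to the limit.

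Next I would check that $T(t)$ maps $X$ into itself. Positivity of $p(x,t)$ for a.e.\ $(x,t)$ is built into Definition \ref{d:solnP}(ii), so $T(t)p_0 \geq 0$ a.e.\ for each $t$. For preservation of the mean I would integrate the identity in Definition \ref{d:solnP}(iii) over $x \in (0,1)$ and invoke Fubini (justified by the $L^1$-integrability recorded in (ii)); since the bracketed integrand $\sigma(p(x,\tau)) - \int_0^1 \sigma(p(y,\tau))\,dy$ has zero spatial mean, this yields $\int_0^1 p(x,t)\,dx = \int_0^1 p(x,s)\,dx$ for all $s, t > 0$. The $C([0,T];L^2(0,1))$-continuity in Definition \ref{d:solnP}(i) then allows me to pass $s \to 0{+}$ and extend this to $t = 0$, giving $\int_0^1 p(x,t)\,dx = \mu$ for every $t \geq 0$, so that $T(t)p_0 \in X$.

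Assertion (i) is immediate from Definition \ref{d:solnP}(i). For the semigroup property (ii), I would fix $s \geq 0$ and observe that both $t \mapsto T(t+s)p_0$ and $t \mapsto T(t)(T(s)p_0)$ are solutions of $(P)$ in the sense of Definition \ref{d:solnP} with the common initial datum $T(s)p_0 \in X$; the uniqueness clause of Theorem \ref{t:exisuniq} therefore forces them to coincide. Joint continuity (iii) is precisely the continuous-dependence statement of Theorem \ref{t:exisuniq}, restricted to the invariant subset $X \times [0,\infty)$ of $L^2(0,1) \times [0,\infty)$.

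No step presents a genuine obstacle; the only item requiring any attention is the Fubini-plus-limit argument for mass conservation, which combines parts (i)--(iii) of Definition \ref{d:solnP} in a minor but essential way. Everything else is direct bookkeeping from the already established existence, uniqueness and continuous dependence.
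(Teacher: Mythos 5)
Your argument is correct and follows essentially the same route as the paper, whose entire proof consists of the mass-conservation step: integrate the identity in Definition \ref{d:solnP}(iii) over $x$ using parts (i) and (ii) of that definition, exactly as you do. The additional items you verify (closedness of $X$, the semigroup property via uniqueness, joint continuity from continuous dependence) are the routine bookkeeping the paper leaves implicit, and your treatment of them is sound.
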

\begin{proof}
We just need to check that $\int_0^1p(x,t)\,dx=\mu$ for all $t\geq 0$. This follows by integration of  (iii) and using (i), (ii)  in Definition \ref{d:solnP}.
\end{proof}

\begin{corollary}\label{c:bounds}
Assume {\rm (H1), (H2), (L1), (U1), (U2)} hold. Then for any $p_0\in L^2(0,1)$ with $p_0(x)\geq 0$ for a.e. $x\in (0,1)$ and $\int_0^1p_0(x)\,dx=\mu$ there exists a unique solution $p$ to Problem {\rm (P)}, and for all $t>0$ the universal lower and upper bounds 
$$\varepsilon(t)\leq p(x,t)\leq E(t) \;\;\mbox{a.e. }x\in (0,1),$$
 hold, where $\varepsilon(t)$ and $E(t)$ are as in Theorems \ref{t:lowerbound}, \ref{t:upperbound} respectively.
\end{corollary}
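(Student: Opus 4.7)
The plan is to combine the existence/uniqueness result of Theorem \ref{t:exisuniq} with the uniform $N$-independent bounds of Theorems \ref{t:lowerbound} and \ref{t:upperbound}, and pass them through the $L^2$-limit of Proposition \ref{p:propagation}.

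First, I would observe that (L1) implies (H3): since $\sigma'(p)\geq\alpha>0$ on $[0,\theta)$, the function $W$ is strictly convex there. Together with (H1) and (U1), Proposition \ref{p:l-conv} then yields $\lambda$-convexity of $W$ on $[0,\infty)$ for some $\lambda>0$. With (H1), (H2), and $\lambda$-convexity in hand, Theorem \ref{t:exisuniq} delivers a unique solution $p$ to problem $(P)$ with the prescribed initial data $p_0$, together with the continuous dependence $p_{0N}\to p_0$ in $L^2(0,1)$ implies $p_N\to p$ in $C([0,T];L^2(0,1))$.

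Next I would use precisely the approximating sequence constructed in the existence part of the proof of Theorem \ref{t:exisuniq}: a sequence $p_{0N}$ of strictly positive, finite-valued functions of the form \eqref{e:fid} with $\int_0^1 p_{0N}\,dx=\mu$ and $p_{0N}\to p_0$ in $L^2(0,1)$. Since all hypotheses (H1), (H2), (L1), (U1), (U2) are in force, Theorem \ref{t:lowerbound} applies to $p_N$ (giving $p_N(x,t)\geq\varepsilon(t)$ a.e.\ $x$, for all $t>0$), and Theorem \ref{t:upperbound} applies (giving $p_N(x,t)\leq E(t)$ a.e.\ $x$, for all $t>0$), with $\varepsilon(t)$ and $E(t)$ independent of $N$ and of the particular initial data.

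Finally, to transfer the bounds to $p$, I would fix $t>0$ and use $p_N(\cdot,t)\to p(\cdot,t)$ in $L^2(0,1)$, which along a subsequence (possibly depending on $t$) gives a.e.\ pointwise convergence. Since $\varepsilon(t)\leq p_N(x,t)\leq E(t)$ holds a.e.\ $x$ for every $N$, the same inequalities pass to the almost-everywhere limit, yielding $\varepsilon(t)\leq p(x,t)\leq E(t)$ for a.e.\ $x\in(0,1)$. There is no real obstacle here beyond bookkeeping; the only mildly delicate point is that the $L^2$-subsequence extracting a.e.\ convergence may depend on $t$, but that causes no trouble because the bounds we wish to verify are stated pointwise in $t$.
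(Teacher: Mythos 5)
Your proposal is correct and follows essentially the same route as the paper: derive $\lambda$-convexity via Proposition \ref{p:l-conv} (using that (L1) implies (H3)), invoke Theorem \ref{t:exisuniq} for existence and uniqueness, and pass the $N$-independent bounds of Theorems \ref{t:lowerbound} and \ref{t:upperbound} to the limit through the convergence $p_N\to p$ in $C([0,T];L^2(0,1))$. The one small point you gloss over is that Theorem \ref{t:lowerbound} also requires (L2), which is not among the Corollary's stated hypotheses but follows from (H2), (U1), (U2) by Lemma \ref{superlemma} --- the paper cites this explicitly.
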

\begin{proof} Note that by Lemma \ref{superlemma} and Proposition \ref{p:l-conv} we have that (L3) holds and $W$ is $\lambda$-convex. 
By Theorems \ref{t:lowerbound} and \ref{t:upperbound} we have existence of  a lower bound $\epsilon(t)$ and an upper bound $E(t)$, respectively, which are both independent of $N.$ Hence, passing to the limit in $\epsilon(t) \leq p_{N}(x,t) \leq E(t)$ as $N \to \infty$ gives the claim for any $t > 0$.
\end{proof}
\begin{remark}
\rm In fact under the stronger hypotheses of the Corollary the proof of existence is much easier, since we can use the upper and lower bounds on $p_N$ to prove that $$\sigma(p_N)-\int_0^1\sigma(p_N)\,dx\;\;\rightarrow\;\;  \sigma(p) - \int_{0}^{1} \sigma(p)\,dx \;\;\;\;\mbox{   strongly in } L^2(Q).$$
\end{remark}
\begin{remark} The existence of the universal lower and upper bounds implies that it is impossible to solve problem $(P)$ backwards in time on any time interval if the initial data $p_0$ does not satisfy $\varepsilon\leq p_0(x)\leq \frac{1}{\varepsilon}$ a.e. $x\in(0,1)$ for some $\varepsilon>0$. This is not surprising in view of the derivation of \eqref{peqn} from \eqref{e:qseqn}.
\end{remark}
\begin{remark}\label{rmk2}We list various additional properties satisfied by the solution $p$ to Problem (P) whose existence was proved in Theorem \ref{t:exisuniq}.\label{s:ex-rem}\medskip

\noindent (a)\;\; $p \in W^{1,2}(\tau, T; L^{2}(0,1)).$ 
Indeed, we know that
\[\int_{0}^{1} W(p_{N}(t))\, dx + \int_{\tau}^{t} \| \dot{p}_{N}(s) \|^{2} ds = \int_{0}^{1} W(p_{N}(\tau))\, dx.\]
Hence $p_{N}$ is bounded in $W^{1,2}(\tau, T; L^{2}(0,1)),$ which implies that $\dot p=-\sigma(p)+\int_0^1\sigma(p)\,dy\in L^2((\tau,T)\times(0,1))$ and thus $p \in W^{1,2}(\tau, T; L^{2}(0,1))$ for any $0<\tau<T$. It follows that the derivative 
$\dot{p}(t)$ exists for a.e. $t>0$ (see \cite[p.145]{Brezis}).\medskip

\noindent (b)\;\;If $W(0) = \infty,$ then $p(x,t) > 0$ a.e. $x \in (0,1)$ and all $t > 0.$ 
Indeed, we have the estimate 
\[t \int_{0}^{1} W(p_{N})\, dx \leq \int_{0}^{t} \int_{0}^{1} W(p_{N}(x,s))\, dx\, ds \]
for $t\geq 0$, since $\int_0^1W(p_N)\,dx$ is nonincreasing in $t$. Therefore, using the fact that $W(v) \geq b - c v^{2}$ for constants $b$ and $c>0$, we have that 
\begin{eqnarray*} t \int_{0}^{1} W(p_{N})\, dx &\leq& \int_0^t\int_0^1 (W(p_N)+cp_N^2-b)\,dx\,ds-\int_0^t\int_0^1(cp_N^2-b)\,dx\,ds\\
&\leq&\int_0^T\int_0^1 (W(p_N)+cp_N^2-b)\,dx\,ds -\int_0^t\int_0^1(cp_N^2-b)\,dx\,ds\\
&\leq& M_1<\infty,
\end{eqnarray*}
for all $t\in [0,T]$, where $M_1$ is independent of $t\in[0,T]$ and we have used \eqref{e:W(pN)-bdd} and Proposition \ref{p:propagation}. Hence, using again the estimate $W(v) \geq b - c v^{2}$
and Fatou's Lemma, we get that 
\[t \int_{0}^{1} W(p(x,t))\, dx \leq M_1 < \infty\]
for all $t\in [0,T]$.
Thus, for any $t > 0,$ we have $\text{meas}\{x \colon p(x,t) = 0\} = 0$. Now, suppose that for $x$ in a set $E\subset(0,1)$ of positive measure, $p(x, t(x)) = 0$ for some $t(x) > 0$. Since $\text{meas}\,E>0$ there exists $x'\in E$ such that $\text{meas}\{x\in E:p_0(x)\leq p_0(x')\}>0$. By Lemma \ref{l:monotonicity} below $0\leq p(x, t(x'))\leq p(x',t(x'))=0$  for all $x$ with $p_{0}(x) \leq p_{0}(x')$,   contradicting $\text{meas}\,\{x:p(x,t(x'))=0\}=0$. Hence, for a.e. $x,$ $p(x,t) > 0$ for all $t > 0.$

\end{remark}

\subsection{Relation with the Theory of Gradient Flows}\label{s:egft}

In this section, we analyze problem $(P)$ using the well-developed existence theory of gradient flows.

\subsubsection{Classical Theory of Gradient Flows}

Let $H$ be a  Hilbert space with inner product $(\cdot,\cdot)$ and norm $\| \cdot \|.$ For given $T> 0$ and $f \colon (0,T) \rightarrow H,$ the gradient flow equation is given by
\begin{displaymath}\label{e:GF}
 \begin{array}{ll}
  \hspace{.6in} \dot{u}(t)\,+\,\partial \phi(u(t))\,\ni\,f(t)\,\,\,\text{for a.e.}\,\,t \in (0,T) & {} \\
 (GF)\\
  \hspace{.6in} u(0)\,=\,u_{0} & {}
\end{array}
\end{displaymath}
where $\phi:H \rightarrow (-\infty,\infty]$ is a proper and lower semicontinuous functional  with effective domain $D(\phi)\,=\,\{u \in H:\,\phi(u)\,<\,\infty \}$ and $\partial \phi \colon D(\partial \phi) \subset H \rightarrow 2^{H}$ is its Fr\'{e}chet subdifferential with   corresponding domain
\begin{equation}\label{e:subdif-dom}
D(\partial \phi) = \{u \in H : \partial \phi(u) \neq \emptyset \}.
\end{equation}
Let us recall that the functional $\phi$ is said to be \textit{proper} if $D(\phi)\,\neq\,\emptyset$ and the \textit{Fr\'{e}chet subdifferential} $\partial \phi$ of $\phi$ at a point $u \in D(\phi)$ is defined as
\begin{equation}\label{e:subdiff}
v \in \partial \phi(u)\,\,\,\,\Leftrightarrow \,\,\,\,\underset{\omega \rightarrow u}{\liminf}\,\, \frac{\phi(\omega)-\phi(u)-(v,\omega -u)}{\| \omega - u \|}\geq\,0.
\end{equation} 
When $\phi$ is assumed to be convex, $ \partial \phi$ is a maximal monotone operator (for the definition see e.g. \cite{Rockafellar}). In this case, existence, uniqueness and regularity of solutions for problem $(GF)$ follow from the well-known theory of nonlinear semigroups in Hilbert spaces developed by \cite{Brezis},   \cite{Cran-Pazy} and   \cite{Kom}. \begin{definition}\label{d:solGF}
Let $f \in L^{1}(0,T;H).$ A function $u \in C([0,T]; H)$ is called a solution of $(GF)$ if $u$ is differentiable a.e. on $(0,T),$ $u(t) \in D(\phi)$ for a.e. $t \in (0,T)$, and there exists $g(t) \in \partial \phi(u(t))$ for a.e. $t \in (0,T)$ such that
$\dot{u}(t) + g(t) = f(t)$ for a.e. $t \in (0,T).$
\end{definition}
When $\phi$ is $\lambda$-convex, the Fr\'{e}chet subdifferential can be characterized by
\begin{displaymath}
\begin{array}{lll}
   v \in \partial \phi(u) & \Leftrightarrow & {}
\end{array}
\left\{ \begin{array}{l}
   u \in D(\phi)\,\,\text{and}  \\ \phi(\omega)-\phi(u)-(v,\omega - u) \geq - \frac{\lambda}{2} \| \omega - u \|^{2}\,\,\,\textrm{for all}\,\,\omega \in H.
\end{array}\right.
\end{displaymath}
This case is covered by   \cite{Brezis} as a Lipschitz  perturbations of the convex case and $u_{0}$ is assumed to be only in $\overline{D(\partial \phi)}$ (see Remark \ref{r:subdif-dom}). His result \cite[Prop. 3.12]{Brezis} gives as a special case that:
\begin{quote}
Assume that $\phi$ is $\lambda$-convex and $u_{0} \in \overline{D(A)}.$ Then there exists a unique  solution to the equation
\[\frac{d u}{d t} + \partial\phi(u)   \ni 0, \quad u(0) = u_{0}.\]
\end{quote}

\subsubsection{Equivalence of the theories}

In this section we show that the existence theory we developed in Section \ref{s:both-ends-fixed} can also be obtained by the theory of gradient flows for $\lambda$-convex functionals, and vice versa. 

\begin{definition}\label{d:phi-domain}
 We define the functional $\phi$ on $L^{2}(0,1)$ as
\begin{displaymath}
\phi(p) = \left\{ \begin{array}{ll}
\displaystyle\int_{0}^{1}W(p)\,dx, & \quad \textrm{if}\,\,\,\displaystyle\int_{0}^{1}p\, dx=\mu, \,p \geq 0\,\,\text{a.e.}  \\
+\infty, & \quad \textrm{otherwise}
\end{array} \right.
\end{displaymath}
and its effective domain as
\begin{equation*}\label{e:domain}
D(\phi(p))\,=\,\left\{p \in L^{2}(0,1):\,p \geq 0\,\,\text{a.e.},\,\int_{0}^{1}W(p)\,dx\, < \infty,\,\int_{0}^{1}\,p\,dx =\mu \right\}.
\end{equation*}
\end{definition}
We now prove the fundamental result necessary for the proof of the equivalence of the theories.

\begin{proposition}\label{p:equivalenceoftheories}
 Assume $W$ is $\lambda$-convex and that {\rm (H1), (H2)} hold. Then, 
\begin{equation*}
\xi \in \partial \phi(p)\,\,\Leftrightarrow\,\,p \in D(\phi), p > 0 \,\,\text{a.e.}, \sigma(p) \in \displaystyle L^{2}(0,1), \xi =\sigma(p)-c\,\,\,\textrm{for a constant}\,\,c.
\end{equation*}
\end{proposition}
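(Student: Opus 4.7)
My approach exploits the fact that, by Proposition \ref{p:l-conv}, the functional $\phi$ is $\lambda$-convex on $L^2(0,1)$. Under $\lambda$-convexity the Fr\'echet subdifferential admits the global characterization
$$\xi\in\partial\phi(p)\ \Longleftrightarrow\ p\in D(\phi)\text{ and }\phi(\omega)-\phi(p)-(\xi,\omega-p)\geq -\tfrac{\lambda}{2}\|\omega-p\|^2\ \text{for all }\omega\in L^2(0,1).$$
I would prove the two directions of the equivalence separately.

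\emph{Sufficiency.} Given $p\in D(\phi)$ with $p>0$ a.e., $\sigma(p)\in L^2(0,1)$, and $\xi=\sigma(p)-c$, I integrate the pointwise $\lambda$-convex inequality
$$W(\omega)-W(p)-\sigma(p)(\omega-p)\geq -\tfrac{\lambda}{2}(\omega-p)^2\qquad (\omega,p\geq 0),$$
valid by Proposition \ref{p:l-conv}, over $(0,1)$. If $\omega\notin D(\phi)$ the target inequality is trivial; for $\omega\in D(\phi)$ the constraint $\int_0^1(\omega-p)\,dx=0$ permits replacing $\sigma(p)$ by $\sigma(p)-c=\xi$ in the linear term, yielding exactly the subgradient inequality.

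\emph{Necessity, Step 1 (computing $\xi$ on $\{p>0\}$).} For small $\delta>0$ let $A_\delta=\{x:\delta<p(x)\leq 1/\delta\}$, which has positive measure for $\delta$ sufficiently small since $\mu>0$ forces $|\{p>0\}|>0$. For any $\eta\in L^\infty(0,1)$ supported in $A_\delta$ with $\int_0^1\eta\,dx=0$ and $|\tau|$ sufficiently small, the perturbation $p+\tau\eta$ belongs to $D(\phi)$: it is nonnegative, has mean $\mu$, and on $A_\delta$ its values remain in a compact subinterval of $(0,\infty)$ on which $W$ is continuous (while off $A_\delta$ it agrees with $p$). Since $\sigma$ is locally Lipschitz on that subinterval, dominated convergence yields
$$\lim_{\tau\to 0}\frac{\phi(p+\tau\eta)-\phi(p)}{\tau}=\int_0^1\sigma(p)\eta\,dx.$$
Dividing the subgradient inequality by $\tau$ and sending $\tau\to 0^\pm$ gives $(\sigma(p)-\xi,\eta)=0$ for every such $\eta$, hence $\sigma(p)-\xi$ is constant a.e.\ on $A_\delta$; nesting over shrinking $\delta$ produces a single constant $c$ with $\xi=\sigma(p)-c$ a.e.\ on $\{p>0\}$.

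\emph{Necessity, Step 2 (showing $|\{p=0\}|=0$).} This is the main obstacle, and the only place where (H2) is essential. Suppose for contradiction that $A=\{p=0\}$ has $|A|>0$. Choose $\delta_0>0$ and $B\subset\{\delta_0\leq p\leq 1/\delta_0\}$ with $|B|>0$, and for $\tau>0$ small set $\omega_\tau=p+\tau(\chi_A-\tfrac{|A|}{|B|}\chi_B)$, so that $\omega_\tau\geq 0$, $\int_0^1\omega_\tau\,dx=\mu$, and $\|\omega_\tau-p\|^2=O(\tau^2)$. The subgradient inequality gives
$$\frac{\phi(\omega_\tau)-\phi(p)}{\tau}\geq\bigl(\xi,\,\chi_A-\tfrac{|A|}{|B|}\chi_B\bigr)-O(\tau),$$
bounded below as $\tau\to 0^+$. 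On the other hand, a direct computation gives
$$\frac{\phi(\omega_\tau)-\phi(p)}{\tau}=\frac{|A|}{\tau}\int_0^\tau\sigma(z)\,dz\ +\ \frac{1}{\tau}\int_B\bigl[W\bigl(p-\tfrac{\tau|A|}{|B|}\bigr)-W(p)\bigr]\,dx.$$
Since $\sigma$ is bounded on $[\delta_0,1/\delta_0]$, dominated convergence shows the second term tends to $-\tfrac{|A|}{|B|}\int_B\sigma(p)\,dx$, which is finite; but by (H2), $\sigma(z)\to-\infty$ as $z\to 0^+$, so the first term tends to $-\infty$, yielding the contradiction. Hence $p>0$ a.e., so by Step 1 $\xi=\sigma(p)-c$ a.e., and in particular $\sigma(p)=\xi+c\in L^2(0,1)$.
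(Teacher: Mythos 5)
Your proof is correct and follows essentially the same route as the paper's: the same integration of the pointwise $\lambda$-convex inequality (with the mean-zero constraint absorbing the constant $c$) for sufficiency, the same compactly-supported mean-zero perturbations on the level sets $\{\delta<p\le 1/\delta\}$ to identify $\xi=\sigma(p)-c$, and the same $\chi_A-\frac{|A|}{|B|}\chi_B$ variation combined with (H2) to exclude a positive-measure zero set. The only cosmetic slip is attributing the $\lambda$-convexity of $W$ to Proposition \ref{p:l-conv} (which needs (H3), (U1)) rather than to the hypothesis of the statement itself, which already assumes it.
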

\begin{proof}
For sufficiency, let $v \in L^{2}(0,1)$ and consider $v$ with $\phi(v) < \infty.$ Then, by $\lambda$-convexity of $W$ we obtain
\begin{eqnarray*}
& & \phi(v) - \phi(p) - (\xi, v - p) + \frac{\lambda}{2} \| v - p \|^{2} \\
& & = \int_{0}^{1}\left( W(v) - W(p) - \xi (v - p) + \frac{\lambda}{2} (v - p)^{2}\right)  dx\\
& & = \int_{0}^{1} \left( W(v) - W(p) - (\sigma(p) - c) (v - p) + \frac{\lambda}{2}(v - p)^{2}\right)  dx \\
& & = \int_{0}^{1}\left(  W(v) + \frac{\lambda}{2} v^{2} - W(p) - \frac{\lambda}{2} p^{2} - (\sigma(p) + \lambda p)(v - p) \right) dx \geq 0.
\end{eqnarray*}
For necessity, let $\xi\in\partial\phi(p)$ and first suppose that $p = 0$ on a set $A$ of positive measure (so that $W(0) < \infty$). Then, $p \geq \mu$ on a set of positive measure and so there exists a set $B$ of positive measure on which $\mu \leq p \leq M < \infty$. We choose $v = p + \epsilon z$ where
\begin{displaymath}
z(x) = \left\{\begin{array}{lll}
1,&  x \in A \\ 
- \frac{\text{meas} A}{\text{meas} B}, & x \in B \\
0, & \text{otherwise}.
\end{array}\right.
\end{displaymath}
Then, $\displaystyle\int_{0}^{1} z(x) dx = 0$ and
\[\int_{0}^{1}\left( W(v) - W(p) - \xi (v - p) + \smfrac{\lambda}{2} (v - p)^{2}\right)  dx \geq 0.\]
This is equivalent to 
\begin{eqnarray*}
&&  \int_{A}\left( W(\varepsilon) - W(0) - \epsilon \xi + \smfrac{\lambda}{2} \epsilon^{2}\right)\,dx \\ 
&& \qquad + \int_{B}\left( W\left(p - \epsilon \smfrac{\text{meas} A}{\text{meas} B} \right) - W(p) + \epsilon \xi \left(\smfrac{\text{meas} A}{\text{meas} B}\right) + \smfrac{\lambda}{2} \epsilon^{2} \left(\smfrac{\text{meas} A}{\text{meas} B}\right)^{2}\right)\,dx \geq 0.\end{eqnarray*}
Dividing by $\epsilon$ and letting $\epsilon \to 0$ we get a contradiction by (H2). This shows that $p$ cannot equal  $0.$ So, let $p > 0$ and choose $v = p + \epsilon \,z$ where $z$ satisfies
\begin{displaymath}
\left\{\begin{array}{lll}
z \in L^{\infty}(0,1), \\ 
z=0\,\,\,\text{outside}\,\,\,\displaystyle E_{\tau}:=\left\{x \in (0,1):\,\tau<p(x)<\smfrac{1}{\tau},\,\tau > 0\right\},\,\\  \displaystyle\int_{E_{\tau}}z\,dx\,=\,0.
\end{array}\right.
\end{displaymath}
Then, clearly $v \in D(\phi)$ and $v \to p$ in $L^{2}(0,1)$ as $\epsilon \to 0$. Hence
\begin{eqnarray*}
 \int_{0}^{1} \big(W(p + \epsilon z) - W(p) - \xi \epsilon z + \smfrac{\lambda}{2}  \epsilon^{2} z^{2} \big) dx \geq 0 .
\end{eqnarray*}
Dividing by $\epsilon$ and letting $\epsilon \to 0$ gives 
$$\int_{E_{\tau}}\,(W'(p)-v)\,z\,dx\,\geq\,0.$$
Repeating the above calculation with $-z$ instead of $z,$ one gets the same inequality for $-z.$ Therefore, we must have
\begin{equation*}
 \int_{E_{\tau}}(\sigma(p)-\xi)\,z\,dx\,=\,0.
\end{equation*}
Therefore
\begin{equation*}
\sigma(p) - \xi = c(\tau)\,\,\,\,\text{for a.e.}\,\,x \in E_{\tau},
\end{equation*}
where $c(\tau)$ is a constant depending on $\tau.$ However, by definition, $E_{\tau}$ is an increasing set which implies that $c(\tau)$ cannot depend on $\tau.$ Therefore
\begin{equation*}
\sigma(p) - \xi = c
\end{equation*}
must hold for a constant $c$ as required.
\end{proof}

\begin{remark}\label{r:subdif-dom} 
By Proposition \ref{p:equivalenceoftheories} and \eqref{e:subdif-dom} we have that 
\begin{eqnarray*} 
& & D(\partial \phi) = \left\{ p \in L^{2}(0,1) \colon \sigma(p) \in L^{2}(0,1), p > 0\,\,\textrm{a.e.}, \right. \\
& & \qquad \qquad \qquad \qquad \qquad \qquad \qquad \left.\int_{0}^{1} W(p) dx < \infty, \int_{0}^{1} p\,dx = \mu \right\}
\end{eqnarray*}
and hence
\[\overline{D(\partial \phi)} = \left\{ p \in L^{2}(0,1) \colon p \geq 0\,\,\textrm{a.e.}, \int_{0}^{1} p \,dx = \mu \right\}.\]
Note that we did not assume the initial energy to be bounded in Section \ref{s:both-ends-fixed}. This is equivalent to assuming $p_{0} \in \overline{D(\partial \phi)}$ as in the above result of Brezis. 
\end{remark}

We can now establish the equivalence of  the existence theory  developed in Section \ref{s:both-ends-fixed} for one-dimensional nonlinear viscoelasticity with that of the theory of gradient flows.

\begin{theorem}
Assume that $W$ is $\lambda$-convex, and \rm (H1) and \rm (H2) are satisfied. Then, any solution $p(\cdot,t)$ of problem $(P)$ is a solution of $(GF)$ with $f(t) \equiv 0,$ and vice versa.
\end{theorem}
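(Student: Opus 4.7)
The proof hinges on the characterization in Proposition \ref{p:equivalenceoftheories}: on $D(\partial\phi)$, $\xi\in\partial\phi(p)$ iff $\xi=\sigma(p)-c$ for some constant $c\in\mathbb R$, with $p>0$ a.e. and $\sigma(p)\in L^2(0,1)$. Both directions consist in rewriting the evolution equation so that this constant is made explicit.

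For $(P)\Rightarrow(GF)$ with $f\equiv 0$, let $p$ solve $(P)$. By Remark \ref{rmk2}(a), $p\in W^{1,2}(\tau,T;L^2(0,1))$ for every $0<\tau<T$, so $\dot p(t)\in L^2(0,1)$ exists for a.e. $t>0$. Setting
\[c(t):=\int_0^1\sigma(p(y,t))\,dy,\]
equation \eqref{peqn} reads $-\dot p(\cdot,t)=\sigma(p(\cdot,t))-c(t)$. Since $c(t)$ is constant in $x$ and the left-hand side lies in $L^2(0,1)$, we conclude $\sigma(p(\cdot,t))\in L^2(0,1)$ for a.e. $t>0$. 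Together with Definition \ref{d:solnP}(ii) and the bound $\int_0^1 W(p(\cdot,t))\,dx<\infty$ (for $t>0$) extracted in Remark \ref{rmk2}(b), this places $p(\cdot,t)$ in $D(\partial\phi)$. Proposition \ref{p:equivalenceoftheories} then yields $\sigma(p(\cdot,t))-c(t)\in\partial\phi(p(\cdot,t))$, i.e. $\dot p(t)+\partial\phi(p(t))\ni 0$ a.e., which is $(GF)$ in the sense of Definition \ref{d:solGF}.

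For the converse, let $u$ solve $(GF)$ with $f\equiv 0$, so for a.e. $t$ there exists $g(t)\in\partial\phi(u(t))$ with $\dot u(t)=-g(t)$ and $u(t)\in D(\phi)$. Proposition \ref{p:equivalenceoftheories} supplies $c(t)\in\mathbb R$ such that $g(\cdot,t)=\sigma(u(\cdot,t))-c(t)$, with $u(\cdot,t)>0$ a.e. and $\sigma(u(\cdot,t))\in L^2(0,1)$. The constraint $\int_0^1 u(x,t)\,dx=\mu$ built into $D(\phi)$ forces $\int_0^1\dot u(x,t)\,dx=0$ for a.e. $t$, so integrating $\dot u=-\sigma(u)+c(t)$ over $x\in(0,1)$ pins down $c(t)=\int_0^1\sigma(u(y,t))\,dy$. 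Substituting back produces \eqref{peqn}, and the remaining items in Definition \ref{d:solnP} follow from $u\in C([0,T];L^2)$, the $L^2$-regularity of $\sigma(u(\cdot,t))$, and absolute continuity of $u$ as an $L^2$-valued curve.

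The only genuinely nontrivial step is upgrading $\sigma(p(\cdot,t))$ from the $L^1$-integrability provided by Definition \ref{d:solnP}(ii) to the $L^2(0,1)$-integrability required to invoke Proposition \ref{p:equivalenceoftheories}. This upgrade is delivered by the time-regularity $p\in W^{1,2}(\tau,T;L^2(0,1))$ in Remark \ref{rmk2}(a): once $c(t)$ is separated out as an $x$-independent constant, the $L^2$ bound on $\dot p(\cdot,t)$ transfers directly to $\sigma(p(\cdot,t))$.
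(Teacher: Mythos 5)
Your proposal is correct and follows essentially the same route as the paper: both directions rest on the subdifferential characterization in Proposition \ref{p:equivalenceoftheories}, with $c(t)$ pinned down by integrating over $x$ and using the mass constraint, and with the $L^2$-integrability of $\sigma(p(\cdot,t))$ extracted from the $W^{1,2}(\tau,T;L^2)$ regularity of Remark \ref{rmk2}(a) after separating off the $x$-independent constant. Your explicit verification that $\int_0^1 W(p(\cdot,t))\,dx<\infty$ for $t>0$ (needed for $p(\cdot,t)\in D(\phi)$) is a small point the paper leaves implicit, but it does not change the argument.
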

\begin{proof}
We take $f(t) \equiv 0$ in $(GF)$ and consider a solution $p(t)$ to $(GF).$ By Definition \ref{d:solGF}, we know that $p$ is differentiable a.e. on $(0,T)$, $p(t) \in D(\phi(p))$, and there exists a $g(t) \in \partial \phi(p(t))$ such that 
\[- \dot{p}(t) = g(t)\,\,\,\,\text{a.e.}\,\, \text{in}\,\, (0,T).\] 
By Proposition \ref{p:equivalenceoftheories}, we must have $p(x,t)=p(t)(x)> 0$ a.e., $\sigma(p(t)) \in L^{2}(0,1)$, and 
\[g(t) = \sigma(p(t))-c(t),\,\,\int_{0}^{1}p(t)\,dx=\,\mu.\]
Therefore,
\[\int_{0}^{1} \dot{p}(t) dx = - \int_{0}^{1} g(t) dx \quad \Rightarrow\quad c(t) = \int_{0}^{1}\sigma(p(t))\,dx\] so that we have
\[\dot{p}(t) = - \sigma(p(t)) + \int_{0}^{1} \sigma(p(t))\,dx.\]
According to Definition \ref{d:solnP} this shows that $p(t)$ is a solution of $(P)$.

Conversely, for all $s, t \in (0,T),$ any solution $p(t)$ of problem $(P)$ satisfies
\[p(t)- p(s) =  - \int_{s}^{t} \Big(\sigma(p(\tau)) - \int_{0}^{1} \sigma(p(\tau)) dx \Big) d\tau.\]
Since $p$ is differentiable for a.e. $t \in (0,T)$ (see Remark \ref{rmk2}) we can divide both sides by $(t - s)$ and let $s \to t.$
This gives
\[\dot{p}(t) = - \sigma(p(t)) + \int_{0}^{1} \sigma(p(t)) dx\,\,\,\,\,\text{for a.e.}\,\,\,t \in (0,T).\]
Since $p \in W^{1,2}(\tau, T; L^{2}(0,1))$ we have $- \sigma(p(t)) + \int_{0}^{1} \sigma(p(t)) dx \in L^{2}(0,1)$ for a.e. $t>0$.  However, the integral is a function of $t$ only, and hence we get $\sigma(p(t)) \in L^{2}(0,1).$ Now  we can set $c(t) = \int_{0}^{1}\,\sigma(p(t))\,dx$, so that $p(t)$ is a solution of $(GF)$ according to Definition \ref{d:solGF}. 
\end{proof}

\section{Asymptotic Behaviour of Solutions}\label{s:asymbehav}

In this section we investigate the asymptotic behaviour of solutions for system $(P)$.  For the rest of the paper we assume that (H1), (H2), (L1), (U1), (U2) hold. By Corollary \ref{semiflow} we know that solutions to $(P)$ generate a semiflow   $\{T(t)\}_{t\geq 0}$ on $$X=\{q\in L^2(0,1):q\geq 0 \;\mbox{ a.e. } x\in(0,1), \int_0^1q\,dx=\mu\}.$$

\subsection{Equilibrium solutions}

Any equilibrium solution $\bar{p} \in L^{2}(0,1)$ for problem $(P)$ satisfies
\begin{displaymath}
\left\{\begin{array}{ll}
  0\,=-\sigma(\bar{p}(x))\,+\,\displaystyle\int_{0}^{1}\,\sigma(\bar{p}(y))\,dy\, & {}\\
  \displaystyle\int_{0}^{1}\,\bar{p}(x)\,dx=\mu & {}
\end{array}\right.
\end{displaymath}
giving $\sigma(\bar{p}(x)) = \,$constant. Therefore, we can define the set of equilibrium solutions as
\begin{equation}\label{e:equilibrium}
\mathcal{E}_{\mu}:= \left\{\bar{p} \in L^{2}(0,1) :\bar p(x) >0 \;\mbox{ a.e. },\; \sigma(\bar{p}(x)) = C\,\,\,\text{for some}\,\,\,C\in \mathbb{R},\,\,\,\int_{0}^{1} \bar{p}(x)\,dx = \mu \right\}.\nonumber
\end{equation}

\begin{proposition}
  $\bar{p}(x) \equiv \mu$ is the unique equilibrium solution if and only if $$\mu\not\in(\min\sigma^{-1}(c),\max\sigma^{-1}(c))$$ for any $c\in\mathbb R$.
In particular, $\bar{p} \equiv \mu$ is the unique equilibrium if $\sigma$ is strictly monotone increasing. If $\bar{p}(x) \equiv \mu$ is not the unique equilibrium, then there are uncountably many equilibria.
\end{proposition}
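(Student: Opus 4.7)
The approach is to exploit the characterization, derived immediately before the statement, that any $\bar p \in \mathcal{E}_\mu$ satisfies $\sigma(\bar p(x)) = c$ a.e.\ for some constant $c \in \mathbb{R}$, so the essential range of $\bar p$ is contained in the level set $\sigma^{-1}(c)$, while $\int_0^1 \bar p\,dx = \mu$. Under the standing hypotheses (H1), (H2), (U2) together with Lemma \ref{superlemma}, $\sigma(p) \to -\infty$ as $p \to 0+$ and $\sigma(p) \to +\infty$ as $p \to \infty$, so $\sigma^{-1}(c)$ is a nonempty compact subset of $(0,\infty)$ for every $c$; write $a(c) := \min \sigma^{-1}(c)$ and $b(c) := \max \sigma^{-1}(c)$.

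For the direction asserting uniqueness, I would take an arbitrary $\bar p \in \mathcal{E}_\mu$ with associated constant $c$. Since $a(c) \leq \bar p(x) \leq b(c)$ a.e., integrating yields $a(c) \leq \mu \leq b(c)$, and the hypothesis $\mu \notin (a(c), b(c))$ then forces $\mu = a(c)$ or $\mu = b(c)$. In either boundary case, $\bar p - \mu$ is a.e.\ of one sign with integral zero, hence $\bar p \equiv \mu$ a.e. The strict-monotonicity special case is immediate because each $\sigma^{-1}(c)$ is then at most a singleton, so the open interval $(a(c), b(c))$ is empty for every $c$.

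For the converse and the uncountability statement, suppose $\mu \in (a(c), b(c))$ for some $c$, so in particular $a(c) < b(c)$, and set $\lambda = (b(c) - \mu)/(b(c) - a(c)) \in (0,1)$. For each measurable $A \subset (0,1)$ with $|A| = \lambda$, the two-valued function
\[
\bar p_A := a(c)\,\chi_A + b(c)\,\chi_{(0,1)\setminus A}
\]
lies in $L^2(0,1)$, is strictly positive (since $\sigma^{-1}(c) \subset (0,\infty)$), satisfies $\sigma(\bar p_A) \equiv c$, and has mean $\mu$, so belongs to $\mathcal{E}_\mu \setminus \{\mu\}$. Letting $A$ vary over the continuum of measurable sets of measure $\lambda$ (for instance the translates $(t,t+\lambda)$ for $t \in (0,1-\lambda)$ already give distinct $L^2$-classes of $\bar p_A$) produces uncountably many distinct equilibria.

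The only point requiring genuine care is the existence of $\min$ and $\max$ of each level set $\sigma^{-1}(c)$, which as noted follows from the asymptotic behaviour of $\sigma$ under the standing hypotheses; beyond that the argument is routine bookkeeping, and I do not foresee a real obstacle.
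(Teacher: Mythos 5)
Your proposal is correct and follows essentially the same route as the paper: both directions rest on the observation that an equilibrium takes values in a level set $\sigma^{-1}(c)$ so its mean $\mu$ lies between $\min\sigma^{-1}(c)$ and $\max\sigma^{-1}(c)$, and the uncountable family is built from exactly the same two-valued functions supported on measurable sets of the appropriate measure. Your extra care about compactness and nonemptiness of the level sets, and about distinctness of the $L^2$-classes, is sound but not a substantive departure.
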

\begin{proof}
If $\min \sigma^{-1}(c) < \mu < \max \sigma^{-1}(c)$ for some $c \in \mathbb{R}$, then 
\[\mu = s\,\min \sigma^{-1}(c) + (1 - s)\,\max \sigma^{-1} (c)\,\,\,\text{for}\,\,\,0 < s < 1,\]
and an uncountable family of equilibria is given by
\begin{displaymath}
p_{E}(x) =\left\{\begin{array}{ll}
  \max \sigma^{-1}(c), \quad &  x \in E^{c}, \\
  \min \sigma^{-1}(c), \quad & x \in E
\end{array}\right.
\end{displaymath}
for any measurable $E \subset (0,1)$ with $\text{meas}(E) = s.$

Conversely, if there is an equilibrium $\bar{p} \neq \mu$, then since $\sigma(\bar{p}) = c$ for some $c$ and $\int_{0}^{1} \bar{p} = \mu,$ there exist subsets $E_{1}, E_{2}$ of $(0,1)$ of positive measure such that $\bar{p}(x) < \mu$ on $E_{1}, \bar{p}(x) > \mu$ on $E_{2}$ so that $\min \sigma^{-1}(c) < \mu < \max \sigma^{-1}(c).$
\end{proof}

\begin{remark}
Note that there can still be uncountably many equilibria if $\sigma$ is monotone and constant on an interval containing $\mu$ as an interior point.
\end{remark}
 
\subsection{Convergence to the set of equilibria} 
 
\begin{proposition}\label{p:convtimeder}
 Let $p(x,t)$ be a solution of problem $(P).$ Then we have \[p_t(\cdot,t)\rightarrow 0 \;\mbox{ in } L^2(0,1) \;\mbox{ as } t\rightarrow\infty.\]
\end{proposition}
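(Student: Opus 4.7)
The plan is to follow the standard asymptotic dissipation argument for gradient flows. First, from the energy equation \eqref{energyeq} I would observe that $t\mapsto \int_0^1 W(p(x,t))\,dx$ is nonincreasing with derivative $-\|p_t(\cdot,t)\|_{L^2}^2$. Fixing any $\tau_0 > 0$, Corollary \ref{c:bounds} confines the orbit to the compact interval $[\varepsilon(\tau_0), E(\tau_0)]$ for all $t \geq \tau_0$, so $W$ is bounded below along the orbit, and the energy equation yields the finite-dissipation bound
\[
\int_{\tau_0}^\infty \|p_s(\cdot, s)\|_{L^2}^2 \, ds \,<\,\infty.
\]
The remaining challenge is the familiar one of upgrading this $L^1$ statement to pointwise decay.

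The key technical step I plan is to show that $h(t) := \|p_t(\cdot,t)\|_{L^2}$ cannot decrease too rapidly. On $[\varepsilon(\tau_0), E(\tau_0)]$ the stress $\sigma$ is Lipschitz with some constant $L$ by (H1), and the identity $p_t = -\sigma(p) + \int_0^1 \sigma(p)\,dy$ combined with $p(\cdot,t) - p(\cdot,s) = \int_s^t p_\tau(\cdot,\tau)\,d\tau$ (valid for $s,t\geq\tau_0$ by Remark \ref{rmk2}(a)) yields, for $t \geq s \geq \tau_0$,
\[
\|p_t(\cdot,t) - p_t(\cdot,s)\|_{L^2} \,\leq\, 2L \int_s^t h(\tau)\,d\tau.
\]
By the reverse triangle inequality this gives $h(s) \leq h(t) + 2L \int_s^t h(\tau)\,d\tau$. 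Setting $u(t) := \int_s^t h(\tau)\,d\tau$, so that $u' + 2Lu \geq h(s)$, an integrating-factor computation produces
\[
\int_s^{s+1} h(\tau)\,d\tau \,\geq\, \frac{h(s)}{2L}\bigl(1 - e^{-2L}\bigr).
\]

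With these two ingredients I would finish by contradiction. If $h(t) \not\to 0$ then there exist $\delta>0$ and $s_n \to \infty$ with $s_{n+1} > s_n + 1$ and $h(s_n) \geq \delta$. The lower bound above gives $\int_{s_n}^{s_n+1} h\,d\tau \geq C$ for some fixed $C > 0$, and Cauchy--Schwarz then yields $\int_{s_n}^{s_n+1} h^2\,d\tau \geq C^2$; summing over the disjoint intervals $(s_n,s_n+1)$ contradicts the finiteness of $\int_{\tau_0}^\infty h^2\,d\tau$. The main obstacle is controlling the time-regularity of $p_t$ without assuming differentiability of $\sigma$; the above Lipschitz-in-time estimate sidesteps this by using only the locally Lipschitz hypothesis (H1) together with the uniform compact range supplied by Corollary \ref{c:bounds}.
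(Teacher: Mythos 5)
Your proof is correct and follows essentially the same route as the paper: finite dissipation $\int_\tau^\infty\|p_t(\cdot,t)\|_2^2\,dt<\infty$ from the energy equation, a quantitative continuity estimate for $\|p_t(\cdot,t)\|_2$ derived from (H1) and the universal bounds of Corollary \ref{c:bounds}, and a Barbalat-type conclusion. The only (cosmetic) difference is that the paper observes directly that $t\mapsto\|p_t(\cdot,t)\|_2^2$ is Lipschitz on $[\tau,\infty)$, since $|p_\tau|$ is uniformly bounded there, and then invokes the standard ``uniformly continuous plus integrable implies tends to zero'' lemma, whereas your integrating-factor computation reproves that lemma by hand.
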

\begin{proof} We use a similar argument to \cite{And-Ball}.
By Theorem \ref{t:exisuniq} we have that for any $\tau>0$
\begin{equation}\label{finiteint}\int_\tau^\infty\|p_t(\cdot,t)\|_2^2\,dt<\infty.\end{equation}
If $\tau>0$ and $t\geq s\geq \tau$ we have that 
\begin{eqnarray*}
\left|\|p_t(\cdot,t)\|_2^2-\|p_t(\cdot,s)\|_2^2\right|&=&
\left|\int_0^1\left(\sigma(p(x,t))^2-\sigma(p(x,s))^2\right)\,dx \right.\\
&&\hspace{.5in}\left. -\left(\int_0^1\sigma(p(y,t))\,dy\right)^2+\left(\int_0^1\sigma(p(y,s))\,dy\right)^2\right|\\
&\leq & C \int_0^1 |p(x,t)-p(x,s)|\,dx
= C \int_0^1 \left| \int_s^t \frac{d}{d \tau} p(x, \tau)\,d\tau \right| dx \\
& \leq& \int_0^1 \int_s^t |p_{\tau}(x,\tau)|\, d\tau\,dx 
 \leq C |t -s |,
\end{eqnarray*}
where we used (H1) and the universal bounds in Corollary \ref{c:bounds}. Hence $\|p_t(\cdot,t)\|_2^2$ is uniformly continuous on $[\tau,\infty)$, so that by \eqref{finiteint} $p_t(\cdot,t)\rightarrow 0$ in $L^2(0,1)$ as $t\rightarrow\infty$.
\end{proof}

We can now prove relative compactness of positive orbits, by noting that the equation is invariant to rearrangement of the initial data (cf. \cite{Serre}).
\begin{lemma}\label{l:monotonicity}
Let $p_0\in X$ be such that $p_0(x)$ is a nondecreasing function of $x$ on $(0,1)$ (that is, $p_0$ has a nondecreasing representative). Then $p(x,t)=(T(t)p_0)(x)$ is a nondecreasing function of $x$ on $(0,1)$ for all $t\geq 0$.
\end{lemma}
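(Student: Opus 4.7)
The plan is to exploit the fact that equation \eqref{peqn} reduces, at each fixed $x$, to a single ODE $\dot{p}=-\sigma(p)+c(t)$ with spatially-constant driving term $c(t)=\int_0^1\sigma(p(y,t))\,dy$. Because $c(t)$ does not depend on $x$, ODE comparison forces the ordering of values of $p_0(x)$ across $x$ to be preserved by the flow. I would implement this at the level of the finite-dimensional approximations and then pass to the limit.

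First, I would construct a sequence of nondecreasing approximants. Given $p_0\in X$ with a nondecreasing representative, define for each $N$ the conditional expectation on the dyadic (or uniform) partition $E_i^N=((i-1)/N,i/N)$ by $p_{0i}^N=N\int_{E_i^N}p_0(x)\,dx$, and set $q_{0N}=\sum_i p_{0i}^N\chi_{E_i^N}$. Since $p_0$ is nondecreasing, the averages $p_{0i}^N$ satisfy $p_{01}^N\leq p_{02}^N\leq\cdots\leq p_{0N}^N$, so $q_{0N}$ is nondecreasing. Then $q_{0N}\to p_0$ in $L^2(0,1)$ and $\int_0^1 q_{0N}\,dx=\mu$. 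A small perturbation $p_{0N}=\mu(q_{0N}+\tfrac{1}{N})/(\int_0^1 q_{0N}\,dx+\tfrac{1}{N})$ makes the values strictly positive while preserving the nondecreasing ordering, the mass constraint, and $L^2$-convergence to $p_0$.

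Next, I apply Theorem \ref{t:exisfid} to obtain $p_N(x,t)=\sum_{i=1}^N p_i(t)\chi_{E_i^N}(x)$, and invoke the monotonicity-of-ordering established in its proof: if $p_{0m}^N<p_{0n}^N$ then $p_m(t)<p_n(t)$ for all $t\in[0,\infty)$ (by running the ODE $\dot q=-\sigma(q)+c_N(t)$ backwards from a hypothetical crossing and using Picard uniqueness). Equality of initial values gives equality for all $t$. Since the $p_{0i}^N$ are nondecreasing in $i$, so are the $p_i(t)$ in $i$, and therefore $p_N(\cdot,t)$ is a nondecreasing step function for every $t\geq 0$.

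Finally, by Proposition \ref{p:propagation}, $p_N\to p$ in $C([0,T];L^2(0,1))$, so for each fixed $t\geq 0$ we have $p_N(\cdot,t)\to p(\cdot,t)$ in $L^2(0,1)$. Passing to a subsequence we obtain a.e. convergence, and an a.e. limit of nondecreasing functions is nondecreasing (outside a null set, which we absorb into the choice of representative). Hence $p(\cdot,t)=T(t)p_0$ has a nondecreasing representative for every $t\geq 0$. The only mildly delicate point is the construction of a nondecreasing simple-function approximation respecting the mass constraint, but the conditional-expectation construction above handles it cleanly; everything else is a direct appeal to results already proved.
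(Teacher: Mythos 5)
Your proof is correct and follows essentially the same route as the paper: establish the ordering-preservation in the finite-dimensional case via the backwards-in-time uniqueness argument from the proof of Theorem \ref{t:exisfid}, approximate $p_0$ by nondecreasing positive simple functions respecting the mass constraint, and pass to the limit using Proposition \ref{p:propagation}. The only difference is that you make explicit the conditional-expectation construction of the nondecreasing approximants, which the paper leaves implicit.
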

\begin{proof}
The claim  holds in the finite-dimensional case (cf. proof of Theorem \ref{t:exisfid}). For the general case, note that  we can approximate $p_{0}$ in $X$ by nondecreasing $p_{0N}$. Then $p_N(\cdot,t)=T(t)p_{0N}$ is nondecreasing, and the result follows since $p_N\rightarrow p$ in $C([0,T]; L^2(0,1))$ for any $T>0$. 
\end{proof}
\begin{proposition}\label{p:conveqsubseq}
Given any $p_0\in X$, the $\omega$-limit set 
$$\omega(p_0)=\{\chi\in X:T(t_j)p_0\rightarrow \chi \;\mbox{ for some sequence }t_j\rightarrow \infty\}$$
is a nonempty, compact subset of the set $\mathcal E_\mu$ of equilibria, and 
\begin{equation}\label{attraction}
{\mathrm{dist}}\, (T(t)p_0,\omega(p_0))\rightarrow 0 \;\mbox{ as } t\rightarrow\infty.
\end{equation}
\end{proposition}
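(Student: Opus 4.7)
My plan is to establish relative compactness of the positive orbit $\{T(t)p_0:t\geq 1\}$ in $L^2(0,1)$, from which nonemptiness and compactness of $\omega(p_0)$ and the attraction property \eqref{attraction} follow by standard dynamical-systems arguments, and then to invoke Proposition \ref{p:convtimeder} to show that every element of $\omega(p_0)$ is an equilibrium. The key point, as suggested by the reference to \cite{Serre} and the preceding Lemma \ref{l:monotonicity}, is that equation \eqref{peqn} is invariant under measure-preserving transformations of $x$: if $\psi:(0,1)\to(0,1)$ is measure-preserving, then $p(x,t)\mapsto p(\psi(x),t)$ sends solutions to solutions, because $\int_0^1\sigma(p(\psi(y),t))\,dy=\int_0^1\sigma(p(y,t))\,dy$.

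First I would pick, via the Halmos--von Neumann type theorem already cited in the introduction, a measure-preserving bijection $\psi$ and a nondecreasing representative $p_0^\ast$ of the nondecreasing rearrangement of $p_0$ so that $p_0=p_0^\ast\circ\psi$. By the invariance just mentioned, $T(t)p_0=(T(t)p_0^\ast)\circ\psi$, and Lemma \ref{l:monotonicity} tells me that $T(t)p_0^\ast$ is a nondecreasing function of $x$ for every $t\geq 0$. Combined with the universal bounds of Corollary \ref{c:bounds}, for each $\tau>0$ the family $\{T(t)p_0^\ast:t\geq\tau\}$ consists of nondecreasing functions taking values in the fixed compact interval $[\epsilon(\tau),E(\tau)]$. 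Helly's selection theorem then provides, along any sequence $t_j\to\infty$, a subsequence converging pointwise a.e.\ to a nondecreasing $\chi^\ast$, and dominated convergence upgrades this to convergence in $L^2(0,1)$. Composing with $\psi$ (an $L^2$-isometry on rearrangement classes) gives $T(t_j)p_0\to \chi^\ast\circ\psi$ in $L^2$, so the positive orbit of $p_0$ is relatively compact.

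From relative compactness, $\omega(p_0)$ is nonempty; it is closed by the standard intersection characterisation $\omega(p_0)=\bigcap_{s\geq 0}\overline{\{T(t)p_0:t\geq s\}}$ and relatively compact as a subset of the closure of a relatively compact set, hence compact. The attraction property \eqref{attraction} follows by a one-line contradiction: if it failed, a sequence $t_j\to\infty$ would stay at distance $\geq\delta>0$ from $\omega(p_0)$, yet by relative compactness a subsequence of $T(t_j)p_0$ converges, and its limit lies in $\omega(p_0)$.

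It remains to show $\omega(p_0)\subset\mathcal{E}_\mu$. Given $\chi\in\omega(p_0)$ with $T(t_j)p_0\to\chi$ in $L^2$, I pass to a further subsequence so that convergence also holds a.e. The uniform bounds $\epsilon(1)\leq T(t)p_0\leq E(1)$ for $t\geq 1$ together with continuity of $\sigma$ on the compact interval $[\epsilon(1),E(1)]$ and dominated convergence give $\sigma(T(t_j)p_0)\to\sigma(\chi)$ in $L^2$, and likewise for the spatial average. Proposition \ref{p:convtimeder} says $p_t(\cdot,t_j)\to 0$ in $L^2$, so passing to the limit in \eqref{peqn} yields $\sigma(\chi(x))=\int_0^1\sigma(\chi(y))\,dy$ a.e., i.e.\ $\sigma(\chi)$ is constant; the constraints $\chi>0$ a.e.\ and $\int_0^1\chi=\mu$ survive the limit, so $\chi\in\mathcal{E}_\mu$. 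The only conceptually nontrivial step is the rearrangement-plus-Helly argument for relative compactness; once that is in hand, the rest is routine $\omega$-limit bookkeeping combined with the already-established convergence $p_t\to 0$.
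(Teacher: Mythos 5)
Your proposal is correct and follows essentially the same route as the paper: rearrange the initial data via a measure-preserving map (the paper cites Ryff's theorem rather than Halmos--von Neumann, and the map need only be measure-preserving, not a bijection), use the invariance of \eqref{peqn} under such maps together with uniqueness and Lemma \ref{l:monotonicity} to reduce to monotone solutions, apply the universal bounds and Helly's selection theorem for relative compactness, and then combine the standard $\omega$-limit facts with Proposition \ref{p:convtimeder} to place the limit points in $\mathcal E_\mu$. No gaps.
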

\begin{proof}
By   \cite{Ryff} (see also \cite{Chong}),   for any real integrable function $f$ defined on the interval $(0,1)$ there exists a measure-preserving map $\delta \colon [0,1] \to [0,1]$ such that $f$ can be written in terms of its nondecreasing rearrangement $f^{*}$ as 
\begin{equation*}
f = f^{*} \circ \delta\,\,\,\,\text{for a.e.}\,\,x \in (0,1).
\end{equation*}
Moreover, for any measure-preserving map $\delta \colon [0,1] \to [0,1]$ and integrable function $f \in L^{1}(0,1),$ we have
\begin{equation}\label{e:mpminv}
\int_{0}^{1} f(x)\,dx = \int_{0}^{1} f(\delta(y))\,dy.
\end{equation} 
We apply this to the initial data $p_0$, thus obtaining a nondecreasing rearrangement $p_0^*$ such that $p_0(x)=p^*_0(\delta(x))$ for some measure-preserving map  $\delta(x) \colon [0,1] \to [0,1]$. 
 By Lemma \ref{l:monotonicity} the solution  $p^*(\cdot,t)=T(t)p_0^*$ with initial data $p_0^*$ is nondecreasing for each $t\geq 0$.  We claim that $p(x,t)=p^*(\delta(x),t)$. Indeed, since $p^*$ is a solution of $(P)$, we have that 
\begin{equation}
\label{Pstar}
p^*(\zeta,t)=p^*(\zeta,\tau)-\int_\tau^t\left(\sigma(p^*(\zeta,s))-\int_0^1\sigma(p^*(\zeta',s))\,d\zeta'\right)\,ds
\end{equation}
 for a.e. $\zeta\in(0,1)$ and all $t\geq\tau>0$. Setting $\zeta=\delta(x)$ in \eqref{Pstar}, and observing that, since $\delta$ is measure-preserving,  $\int_0^1\sigma(p^*(\zeta',s))\,d\zeta'=\int_0^1\sigma(p^*(\delta(x),s))\,dx$, we see that $p^*(\delta(x),t)$ is a solution to $(P)$, and so by uniqueness $p(x,t)=p^*(\delta(x),t)$ as claimed.

 By the universal upper  bound we  know that $p^*(\cdot,t)$ is bounded in  $L^{\infty}(0,1)$ for sufficiently large $t$. Since $p^*(\cdot,t)$ is nondecreasing it follows that $p^*(\cdot,t)$ is a function of uniformly bounded variation. Applying Helly's Selection Theorem (see e.g. \cite[p. 222]{Natanson}) it follows that  for any sequence $t_j\rightarrow\infty$ there exists a subsequence $ t_{j_k} $ and a bounded nondecreasing $q \in L^{2}(0,1)$ such that
\[p^*(\cdot,t_{j_k})\,\rightarrow\,q(\cdot)\,\,\,\text{in}\,\,\,L^{2}(0,1)\,\,\,\text{as}\,\,\,k \to \infty.\]
Hence $p(\cdot,t_{j_k})\rightarrow q(\delta(\cdot))$ as $k\rightarrow \infty$. Thus the positive orbit $\gamma^+(p_0)=\bigcup_{t\geq 0}T(t)p_0$ is relatively compact in $X$, and thus by standard results (see e.g. \cite[p. 36]{Hale}) $\omega(p_0)$ is nonempty, compact  and \eqref{attraction} holds. 
By the universal lower and upper bounds and the continuity of 
  $\sigma$, if $\chi=\lim_{j\rightarrow\infty}T(t_j)p_0\in\omega(p_0)$ then
\[\sigma(T(t_{j})p_0)\,\rightarrow\,\sigma(\chi)\,\,\,\text{in}\,\,\,\,L^{2}(0,1)\,\,\,\,\text{as}\,\,\,j \to \infty.\]
By  Proposition \ref{p:convtimeder} we also have that 
\[\sigma(T(t_j)p_0)-\int_{0}^{1}\sigma(T(t_j)p_0)\,dy\,\rightarrow\,0\,\,\,\,\text{in}\,\,\,\,L^{2}(0,1)\,\,\,\,\text{as}\,\,\,\,j \to \infty ,\]
and thus $\sigma(\chi)=\int_0^1\sigma(\chi)\,dx$ and
  $\chi \in \mathcal{E}_{\mu}.$
\end{proof}
\begin{corollary}\label{monsigma}
If $\sigma$ is monotone (not necessarily strictly) and $p_0\in X$ then $T(t)p_0\to\bar p$ as $t\to\infty$ for some equilibrium $\bar p\in\mathcal E_\mu$.
\end{corollary}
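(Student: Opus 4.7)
The plan is to use the fact that monotone $\sigma$ makes the semiflow non-expansive in $L^2$, which combined with Proposition \ref{p:conveqsubseq} forces $\omega(p_0)$ to be a single point. First I would observe that although the statement just says $\sigma$ is monotone, assumption (L1) forces $\sigma'\geq\alpha>0$ near $0$, so ``monotone'' here must mean non-decreasing; hence $W$ is genuinely convex on $(0,\infty)$, i.e.\ $\lambda$-convex with $\lambda=0$.

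Next I would re-examine the estimate derived in the proof of Proposition \ref{p:propagation}. With $\lambda=0$ that estimate collapses to $\frac{d}{dt}\|p_1(\cdot,t)-p_2(\cdot,t)\|_2^2\leq 0$ for any two finite-dimensional solutions, and passage to the limit (as in the proof of Theorem \ref{t:exisuniq}) gives the non-expansivity property
\[
\|T(t)p_0-T(t)q_0\|_2\,\leq\,\|T(s)p_0-T(s)q_0\|_2\qquad\text{for all }t\geq s\geq 0,\ p_0,q_0\in X.
\]

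Now I would pick any $\bar p\in\omega(p_0)$, which is nonempty and contained in $\mathcal E_\mu$ by Proposition \ref{p:conveqsubseq}. Since $\bar p$ is an equilibrium, $T(t)\bar p=\bar p$ for every $t\geq 0$, so applying the non-expansivity inequality to $p_0$ and $\bar p$ shows that $t\mapsto\|T(t)p_0-\bar p\|_2$ is non-increasing. Hence $L:=\lim_{t\to\infty}\|T(t)p_0-\bar p\|_2$ exists in $[0,\infty)$. By definition of $\omega(p_0)$ there is a sequence $t_j\to\infty$ with $T(t_j)p_0\to\bar p$ in $L^2(0,1)$, so evaluating $L$ along this subsequence gives $L=0$. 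Therefore $T(t)p_0\to\bar p$ in $L^2(0,1)$ as $t\to\infty$, which in particular collapses $\omega(p_0)$ to $\{\bar p\}$.

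The only point requiring genuine care is the first step—verifying that the Gronwall-type contraction really does descend from the finite-dimensional approximations to the full semiflow when $\lambda=0$—but this is exactly the content of Proposition \ref{p:propagation} and its uniqueness corollary in Theorem \ref{t:exisuniq}, so no new estimate is needed. Everything else is a short La Salle-type argument built on the compactness already established in Proposition \ref{p:conveqsubseq}.
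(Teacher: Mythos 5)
Your proposal is correct and follows essentially the same route as the paper: both establish that monotonicity of $\sigma$ makes $\{T(t)\}_{t\geq 0}$ non-expansive on $X$ (the paper by differentiating $\frac{1}{2}\|p-q\|_2^2$ directly, you by setting $\lambda=0$ in the Gronwall estimate of Proposition \ref{p:propagation}), and then both conclude by noting that $t\mapsto\|T(t)p_0-\bar p\|_2$ is non-increasing for $\bar p\in\omega(p_0)$ and must therefore tend to $0$ along the whole orbit.
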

\begin{proof}
If $\sigma$ is monotone then $\{T(t)\}_{t\geq 0}$ is a contraction semigroup on $X$, since
$$\frac{d}{dt}\frac{1}{2}\|p(t)-q(t)\|_2^2=-(\sigma(p)-\sigma(q),p-q)\leq 0$$
for any two solutions $p,q$. By Proposition \ref{p:conveqsubseq}, $\omega(p_0)$
consists only of equilibria. Let $\bar p\in\omega(p_0)$. Then $\lim_{t\to\infty}\|T(t)p_0-\bar p\|_2=l$ for some constant $l$. But since $\bar p\in \omega(p_0)$ we have $l=0$ and $T(t)p_0\to \bar p$ as $t\to\infty$, as required.
\end{proof}

By a {\it global attractor} $A$ of a semiflow $\{T(t)\}_{t\geq 0}$ on a metric space $X$ is meant a compact, invariant (i.e. $T(t)A=A$ for all $t\geq 0$) set that attracts bounded sets.

\begin{theorem}
There exists a  global attractor in $X$ for the   semiflow $\{T(t)\}_{t\geq 0}$ associated with problem $(P).$ 
\end{theorem}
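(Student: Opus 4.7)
The approach is the standard one for dissipative dynamical systems: I would verify that the semiflow admits a bounded absorbing set and is asymptotically compact on it, then invoke the abstract criterion (see e.g.\ Hale's \emph{Asymptotic Behavior of Dissipative Systems} or Temam's \emph{Infinite Dimensional Dynamical Systems in Mechanics and Physics}) that under these two hypotheses the set $\mathcal{A}:=\omega(B_0)$ is a nonempty, compact, invariant global attractor.

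\textbf{Step 1: Bounded absorbing set.} Corollary \ref{c:bounds} provides universal bounds $\varepsilon(t)\le p(x,t)\le E(t)$ for a.e.\ $x$ and all $t>0$, independent of the initial datum. Since $\varepsilon(\cdot)$ is nondecreasing and $E(\cdot)$ is nonincreasing, fixing any $t_1>0$ and setting
\[
B_0:=\{q\in X : \varepsilon(t_1)\le q(x)\le E(t_1)\ \text{for a.e. } x\in(0,1)\}
\]
yields a set bounded in $L^\infty(0,1)$, hence in $L^2(0,1)$, and satisfying $T(t)X\subset B_0$ for every $t\ge t_1$. In particular $B_0$ absorbs every bounded subset of $X$.

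\textbf{Step 2: Asymptotic compactness.} Given $p_{0n}\in B_0$ and $t_n\to\infty$, I would invoke the rearrangement decomposition from the proof of Proposition \ref{p:conveqsubseq}: for each $n$ there exist a nondecreasing rearrangement $p_{0n}^*$ and a measure-preserving map $\delta_n:(0,1)\to(0,1)$ such that $p_{0n}=p_{0n}^*\circ\delta_n$, and the uniqueness argument there yields $T(t)p_{0n}=T(t)p_{0n}^*\circ\delta_n$. By Lemma \ref{l:monotonicity} each $T(t_n)p_{0n}^*$ is nondecreasing in $x$, and by the universal bounds it is also sandwiched between $\varepsilon(t_1)$ and $E(t_1)$; hence the functions $T(t_n)p_{0n}^*$ have uniformly bounded total variation. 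Helly's Selection Theorem then extracts a subsequence converging in $L^2(0,1)$ to some nondecreasing limit. Using that $L^2$--norms are invariant under composition with measure-preserving maps, one transfers this precompactness to the sequence $T(t_n)p_{0n}$, showing that $\omega(B_0)$ is relatively compact in $L^2(0,1)$.

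\textbf{Step 3: Conclusion.} The abstract theorem then gives that $\mathcal{A}:=\omega(B_0)$ is a nonempty, compact, invariant set attracting every bounded $B\subset X$. By Proposition \ref{p:conveqsubseq} one moreover has $\mathcal{A}\subset\mathcal{E}_\mu$.

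The main obstacle I foresee is Step 2: the rearrangement trick immediately gives Helly-compactness for the nondecreasing trajectories $T(t_n)p_{0n}^*$, but the measure-preserving maps $\delta_n$ need not converge, so extra care is required to ensure that asymptotic compactness actually holds in the $L^2$-topology for the original (unrearranged) sequence. The dissipative gradient-flow structure, together with the $W^{1,2}(\tau,T;L^2)$-regularity from Remark \ref{rmk2}(a), should provide sufficient rigidity to push the compactness of the rearranged orbits through to the unrearranged ones.
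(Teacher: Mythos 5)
Your Steps 1 and 3 follow exactly the paper's strategy: the paper also reduces the theorem to point dissipativity (from the universal upper bound) plus asymptotic compactness, citing the abstract criterion of Hale and of Ball, and then asserts that asymptotic compactness follows from ``the same argument as in the proof of Proposition \ref{p:conveqsubseq}''. The difficulty you flag in Step 2 is, however, a genuine gap, and it cannot be closed. The rearrangement/Helly argument of Proposition \ref{p:conveqsubseq} uses one \emph{fixed} measure-preserving map $\delta$ attached to one fixed initial datum, so that composition with $\delta$ is an isometry of $L^2(0,1)$ and transfers the Helly limit back to the original orbit; with a sequence of data $p_{0n}$ you get a sequence $\delta_n$, and all you can conclude is that $T(t_n)p_{0n}-q\circ\delta_n\to 0$ in $L^2$, where $q$ is the Helly limit of the monotone trajectories. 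The set of rearrangements $\{q\circ\delta\}$ of a fixed nonconstant $q$ is \emph{not} relatively compact in $L^2$ (characteristic functions of oscillating sets of fixed measure converge weakly but not strongly), and the $W^{1,2}(\tau,T;L^2(0,1))$ regularity you invoke is purely temporal and yields no compactness in $x$, so it cannot rescue the step.

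Worse, asymptotic compactness in the strong $L^2$ topology is actually false whenever $\mathcal{E}_\mu$ contains two-phase equilibria, i.e.\ whenever $\min\sigma^{-1}(c)<\mu<\max\sigma^{-1}(c)$ for some $c$: choose $E_n\subset(0,1)$ with $\mathrm{meas}(E_n)=s$ fixed and $\chi_{E_n}\rightharpoonup s$ weakly but not strongly, and set $p_{0n}=\min\sigma^{-1}(c)\,\chi_{E_n}+\max\sigma^{-1}(c)\,\chi_{E_n^c}\in\mathcal{E}_\mu\subset X$. These are fixed points of the semiflow, so $T(t_n)p_{0n}=p_{0n}$ for any $t_n\to\infty$, and this bounded sequence has no strongly convergent subsequence. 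Moreover, a compact invariant set that attracts bounded sets must contain every equilibrium, and since $\{p_{0n}\}$ is not relatively compact, no global attractor in the sense defined can exist in this (generic, non-monotone) case. So the obstacle you identified is not a technicality to be finessed: the conclusion needs to be weakened (e.g.\ an attractor for the weak $L^2$ topology, or for the restriction of the semiflow to monotone data, where Helly applies directly) or the hypotheses strengthened (e.g.\ monotone $\sigma$, as in Corollary \ref{monsigma}). The paper's one-line justification of asymptotic compactness is subject to exactly the same objection.
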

\begin{proof}
It suffices to show (see \cite[Theorem 3.4.6]{Hale},  \cite[Theorem 3.3]{Ball97}) that $\{T(t)\}_{t\geq 0}$ is
 point dissipative (that is, there exists a bounded set $B_{0}$ such that, for any $p_0\in X$,   $T(t)p_0 \in B_{0}$ for all sufficiently large $t$),  and 
 asymptotically compact (that is, for any bounded sequence $p_{0j} \in X$  and  any sequence $t_{j} \to \infty$, the sequence $T(t_{j})p_{0j}$ has a convergent subsequence). That $\{T(t)\}_{t\geq 0}$ is point dissipative follows immediately from the universal upper bound, while the same argument as in the proof of Proposition \ref{p:conveqsubseq} establishes the asymptotic compactness.
\end{proof}

\subsection{Convergence to equilibrium}\label{s:conveq}
In this section we discuss the problem of proving that, for any $p_0\in X$, $T(t)p_0$ converges in $X$ to a unique equilibrium (as opposed to converging to the set of equilibria as established in the previous subsection). This is delicate because there is in general a continuum of equilibria.

For the case when $p_0$ takes finitely many values, as discussed in Section \ref{s:fid}, convergence to a unique equilibrium was proved by  \cite{Pego-92} by a result of  \cite{Hale-Massatt} whose proof was clarified in \cite{Hale-Raugel}. However, adapting the proof to the case of general initial data encounters a serious difficulty which was already noted by \cite{Frie-Mc}, namely that for bounded $\sigma:\mathbb R\to \mathbb R$ (which we effectively have on account of the universal  bounds) the map $p\mapsto \sigma (p)$ is not $C^1$ from $L^2(0,1)$ to $L^2(0,1)$ unless $\sigma$ is constant, this being closely related to the motion of phase boundaries (see also \cite{Brunovsky-Polacik}).

One might think, however, that a possible strategy  might be to use the fact that we have a dense set of initial data for which convergence to a unique equilibrium holds, namely finite-dimensional initial data. However, this kind of argument fails even in finite dimensions, as the following example shows.

\begin{example}
Consider the ODE in ${\mathbb R}^3$ written in cylindrical polars $(r,\theta,z)$ by
\begin{eqnarray*}
\dot r&=&-r(1-r)^2-r|z|\\
\dot z&=& -z|z|\\
\dot\theta&=&r(r-1)
\end{eqnarray*}
Writing $u=(x,y,z)$ we can write this as $\dot u=f(u)$ with $f:{\mathbb R}^3\rightarrow {\mathbb R}^3$ Lipschitz. Also $|u|^2=r^2+z^2$ is a Lyapunov function. So we have global existence and the $\omega$-limit set of every solution is contained in the set of rest points given by $z=0$, $r=0$ or $r=1$, that is by the origin plus the unit circle $S^1$ in the $(x,y)$ plane. Then 
\begin{eqnarray*}
z(t)=\frac{z(0)}{1+|z(0)|t}.
\end{eqnarray*}
Thus if $z(0)\neq 0$ then $z(t) \to 0$ as $t \to \infty$ and since 
\begin{eqnarray*}
\dot r(t)\leq -r(t)|z(t)|,
\end{eqnarray*}
 it follows that 
\begin{eqnarray*}
r(t)\leq \frac{r(0)}{1+|z(0)|t},
\end{eqnarray*}
and hence $u(t) \to 0$ as $t \to \infty$.

If $z(0)=0$ then $z(t)=0$ for all $t\geq 0$ and so we have the ordinary differential equation in ${\mathbb R}^2$
\begin{eqnarray*}
\dot r&=&-r(1-r)^2\\
\dot\theta&=& r(r-1).
\end{eqnarray*}
In this case we have $r(t) \to 1$ as $t \to \infty$ if $r(0)\geq 1$ and $r(t) \to 0$ as $t\to \infty$ if $r(0)<1$. If $r(0)>1$ then $r(t)\leq r(0)$ for all $t\geq 0$ and so $\dot r(t)\geq -r(0)(1-r(t))^2$, from which it follows by integration that
\begin{eqnarray*}
r(t)-1\geq \frac{1}{r(0)t+\frac{1}{r(0)-1}}.
\end{eqnarray*}
Therefore 
\begin{eqnarray*}
\dot \theta \geq\frac{1}{r(0)t+\frac{1}{r(0)-1}},
\end{eqnarray*}
and so $\theta(t)\to \infty$ as $t \to \infty$ and thus $\omega(u(0))=S^1$.

Thus we have an example with a Lyapunov function such that for a dense set of initial data the solution converges to a rest point, while there is a solution which does not tend to a rest point.
\end{example}

Another standard technique for proving convergence to a unique equilibrium is to use the  Lojasiewicz-Simon inequality, introduced by   \cite{Loja} in  a finite-dimensional setting and later generalized by  \cite{Simon} (see also \cite{Jendoubi}) to infinite dimensions, for which analyticity assumptions on nonlinear terms are needed.  The inequality is used to obtain an estimate which in our case would correspond to 
\begin{equation}
\label{l1}
\int_0^\infty \|p_t\|_2\,dt<\infty,
\end{equation}
 thus preventing the length of the orbit being infinite. (Of course,     from   \eqref{finiteint} we have  the weaker statement that $\int_0^\infty \|p_t\|_2^2\,dt<\infty$.) This method does not seem applicable for similar reasons to those mentioned above in connection with the Hale-Massatt theorem. Also, it does not seem easy to prove \eqref{l1} directly.

Because of these difficulties the only currently viable method seems to be that introduced in    \cite{And-Ball} (see also   \cite{Pego-91}). The first step is the following lemma.

\begin{proposition}\label{l:limF}
For any $C^{1}$ function $F : \mathbb{R} \to \mathbb{R}$ we have
\begin{equation*}
\underset{t \to \infty}{\lim}\,\int_{0}^{1} \int_{1}^{p(x,t)}\,F(\sigma(z))\,dz\,dx\,\,\,\,\,\text{exists.}
\end{equation*}
\end{proposition}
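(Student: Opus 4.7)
The plan is to write $\Phi(t):=\int_0^1 G(p(x,t))\,dx$ with $G(p):=\int_1^p F(\sigma(z))\,dz$, and to show that $\Phi(t)$ admits a finite limit as $t\to\infty$. The underlying observation is that if $F$ were monotone nondecreasing then $\Phi$ would be a Lyapunov function, so the general $C^1$ case should reduce to a difference of two monotone cases via a Jordan-type decomposition of $F$.

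First, I would fix $\tau>0$ and invoke Corollary~\ref{c:bounds}: there exist constants $0<m\leq M<\infty$ (depending on $\tau$) such that $m\leq p(x,t)\leq M$ for a.e.\ $x\in(0,1)$ and all $t\geq\tau$. Hence $\sigma(p(x,t))$ lies in a bounded interval $[A,B]=\sigma([m,M])$ for such $x,t$. On $[A,B]$ I would decompose $F=F_1-F_2+F(A)$, where
\[
F_1(s):=\int_A^s \bigl(F'(\xi)\bigr)^{+}\,d\xi,\qquad F_2(s):=\int_A^s \bigl(F'(\xi)\bigr)^{-}\,d\xi,
\]
are both Lipschitz and nondecreasing, extended arbitrarily outside $[A,B]$. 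Setting $G_i(p):=\int_1^p F_i(\sigma(z))\,dz$, we obtain $G=G_1-G_2+F(A)(p-1)$, so
\[
\Phi(t)=\Phi_1(t)-\Phi_2(t)+F(A)(\mu-1),\qquad \Phi_i(t):=\int_0^1 G_i(p(x,t))\,dx,
\]
where the constant term comes from the mass constraint $\int_0^1 p\,dx=\mu$. Thus it suffices to show that each $\Phi_i$ has a limit as $t\to\infty$.

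Next, using the $W^{1,2}(\tau,T;L^2(0,1))$ regularity of $p$ from Remark~\ref{rmk2}(a) and the boundedness of $F_i\circ\sigma$ on $[m,M]$, I would differentiate under the integral:
\[
\frac{d}{dt}\Phi_i(t)=\int_0^1 F_i(\sigma(p))\,p_t\,dx=-\int_0^1 F_i(\sigma(p))\bigl(\sigma(p)-\bar\sigma(t)\bigr)\,dx,
\]
where $\bar\sigma(t):=\int_0^1\sigma(p(y,t))\,dy$. Since $\int_0^1(\sigma(p)-\bar\sigma)\,dx=0$, subtracting $F_i(\bar\sigma)$ inside the integrand yields
\[
\frac{d}{dt}\Phi_i(t)=-\int_0^1\bigl(F_i(\sigma(p))-F_i(\bar\sigma)\bigr)\bigl(\sigma(p)-\bar\sigma\bigr)\,dx\leq 0,
\]
because $F_i$ is nondecreasing. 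Hence $\Phi_i$ is nonincreasing on $[\tau,\infty)$, and since $G_i$ is continuous and therefore bounded on the compact interval $[m,M]$, $\Phi_i$ is bounded; consequently $\lim_{t\to\infty}\Phi_i(t)$ exists for $i=1,2$, and then so does $\lim_{t\to\infty}\Phi(t)$.

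The only real subtlety is the Lyapunov decomposition step: writing $F$ as a difference of Lipschitz nondecreasing functions so that the monotonicity argument $\frac{d}{dt}\Phi_i\leq0$ becomes available. A minor bookkeeping point is that the base point $1$ in the definition of $G_i$ may lie outside $[m,M]$; this is harmless, since changing the base point only alters each $G_i$ by an additive constant whose contribution to $\Phi_i$ is constant in $t$.
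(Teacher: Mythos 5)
Your proof is correct, but the reduction from general $C^1$ functions $F$ to monotone ones is genuinely different from the paper's. Both arguments share the same core: compute $\frac{d}{dt}\int_0^1\int_1^{p}F(\sigma(z))\,dz\,dx=-\int_0^1 F(\sigma(p))(\sigma(p)-\bar\sigma)\,dx$, observe that for nondecreasing $F$ this is $\leq 0$ (you see this by subtracting the constant $F(\bar\sigma)$, the paper by symmetrizing into a double integral over $(x,y)$ --- these are equivalent), and conclude via the universal bounds that the monotone, bounded quantity converges. Where you diverge is the non-monotone case: you use a Jordan-type decomposition $F=F_1-F_2+F(A)$ on the compact range $\sigma([m,M])$, with $F_i(s)=\int_A^s(F')^{\pm}\,d\xi$ nondecreasing, and apply the monotone case twice; the paper instead perturbs to $h(z)=z+\varepsilon F(z)$, which is increasing on compacta for small $|\varepsilon|$, applies the monotone case to $h$, and recovers the $F$-term by subtracting $\int_0^1\int_1^p\sigma(z)\,dz\,dx=\int_0^1 W(p)\,dx-C$, whose convergence comes from the energy equation \eqref{energyeq}. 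Your route is slightly more self-contained (it does not invoke the energy equation, only the mass constraint to absorb the affine term $F(A)(p-1)$), at the modest cost of the bookkeeping about the decomposition being valid only on $[A,B]$ and the base point $1$ possibly lying outside $[m,M]$ --- which you correctly note is harmless since it shifts each $\Phi_i$ by a $t$-independent constant. Both proofs use $F\in C^1$ only to ensure $F$ is a difference of (Lipschitz) nondecreasing functions on compact sets, so the two approaches are of essentially equal strength.
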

\begin{proof}
 Let $F \in C^{1}.$ Then, for $t>0$,
\begin{eqnarray*}
  & & \frac{d}{dt} \int_{0}^{1} \int_{1}^{p(x,t)}\,F(\sigma(z))\,dz\,dx = \int_{0}^{1} F(\sigma(p(x,t)))\,p_{t}(x,t)\,dx \\
  & = & \int_{0}^{1} F(\sigma(p(x,t))\,\left( - \sigma(p(x,t)) + \int_{0}^{1} \sigma(p(y,t))\,dy \right)\,dx \\
  & = & - \int_{0}^{1} \int_{0}^{1} F(\sigma(p(x,t))\,(\sigma(p(x,t)) - \sigma(p(y,t)) )\,dy dx \\
  & = & - \frac{1}{2} \int_{0}^{1} \int_{0}^{1} \big(F(\sigma(p(x,t))) - F(\sigma(p(y,t))) \big) \big(\sigma(p(x,t)) - \sigma(p(y,t)) \big) \,dy dx.
\end{eqnarray*}
If $F'(z) \geq 0,$ then the result immediately follows since from above we get that
\[\frac{d}{dt} \int_{0}^{1} \int_{1}^{p(x,t)}\,F(\sigma(z))\,dz\,dx\,\leq\,0,\] which, by the universal bounds, implies that the function
\[\int_{0}^{1} \int_{1}^{p(x,t)}\,F(\sigma(z))\,dz\,dx\] is nonincreasing and bounded from below. If $F$ is not monotone, then we define
$h(z) = z + \varepsilon\,F(z).$ For sufficiently small $|\, \varepsilon |,$ $r + \varepsilon\,F(r)$ is monotone increasing for $r$ in any compact subset of $(0,\infty)$. Hence $h'(z) \geq 0.$ Since
\begin{eqnarray*}
\varepsilon \int_{0}^{1} \int_{1}^{p(x,t)} F(\sigma(z)) dz dx &=& \int_{0}^{1} \int_{1}^{p(x,t)} h(\sigma(z)) dz dx - \int_{0}^{1} \int_{1}^{p(x,t)} \sigma(z) dz dx \\
&=& \int_{0}^{1} \int_{1}^{p(x,t)} h(\sigma(z)) dz dx -\int_{0}^{1} W(p) dx + C,
\end{eqnarray*}
and each term on the right-hand side tends to a constant as $t\to\infty$ this proves the claim.
\end{proof}
\begin{corollary}\label{charfn}
Let $\chi_{[a,b]}$ be the characteristic function of a bounded closed interval $[a,b]\subset(0,\infty)$. Then
\begin{equation}\label{e:lim-chi}
\underset{t \to \infty}{\lim}\,\int_{0}^{1} \int_{0}^{p(x,t)} \chi_{[a,b]}(\sigma(z))\,dz\,dx\,\,\,\text{exists}.
\end{equation}
 \end{corollary}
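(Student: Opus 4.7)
The plan is to reduce Corollary \ref{charfn} to Proposition \ref{l:limF} via the decomposition
$\chi_{[a,b]}=\chi_{[a,\infty)}-\chi_{(b,\infty)}$,
which writes $\chi_{[a,b]}$ as a difference of two bounded nondecreasing functions. The main step is to extend Proposition \ref{l:limF} from $C^{1}$ functions $F$ to any bounded nondecreasing $F$, and then apply it to each summand.

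Given such an $F$, I would approximate from above by a decreasing sequence of $C^{1}$ nondecreasing functions $F_{n}\downarrow F$ pointwise on $\mathbb{R}$, obtained by smoothly rounding off the (at most countably many) jumps of $F$ on shrinking neighborhoods, with $F_n'$ arranged to vanish at the glue points so that $F_n\in C^{1}(\mathbb{R})$. For each $F_n$, since $F_n'\geq 0$, the derivative computation in the proof of Proposition \ref{l:limF} gives that
$J_{F_n}(t):=\int_{0}^{1}\int_{1}^{p(x,t)}F_{n}(\sigma(z))\,dz\,dx$
is nonincreasing in $t$. Since $F_n(\sigma(z))\downarrow F(\sigma(z))$ pointwise and $|F_n|\le \|F_1\|_\infty$, dominated convergence (together with the universal upper bound $p(x,t)\le E(1)=:M$ for $t\ge 1$ from Corollary \ref{c:bounds}, which confines $z$ to a fixed bounded interval $[\min(1,\varepsilon(1)),\max(1,M)]$) gives $J_{F_n}(t)\to J_F(t)$ for each $t>0$. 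A pointwise limit of nonincreasing functions is nonincreasing, so $J_F$ is nonincreasing in $t$. Combined with the uniform bound $|J_F(t)|\le\|F\|_\infty(1+M)$ coming again from the universal bounds, the nonincreasing bounded function $J_F$ must converge as $t\to\infty$.

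Applying this extension with $F=\chi_{[a,\infty)}$ and $F=\chi_{(b,\infty)}$ yields the existence of the limits $L_1=\lim_t J_{\chi_{[a,\infty)}}(t)$ and $L_2=\lim_t J_{\chi_{(b,\infty)}}(t)$, so by linearity $\lim_t J_{\chi_{[a,b]}}(t)=L_1-L_2$ exists. Since the integral in \eqref{e:lim-chi} differs from $J_{\chi_{[a,b]}}(t)$ only by the time-independent constant $\int_{0}^{1}\chi_{[a,b]}(\sigma(z))\,dz$ (accounting for $\int_0^{p(x,t)}$ versus $\int_1^{p(x,t)}$), \eqref{e:lim-chi} follows. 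The only real obstacle is verifying that the nonincreasing property in $t$ survives the $n$-limit, which is immediate from the elementary observation that a pointwise monotone limit of nonincreasing functions is nonincreasing; notably, this route avoids any assumption that the level sets $\sigma^{-1}(\{a,b\})$ have Lebesgue measure zero, which would be required by the alternative approach of squeezing $\chi_{[a,b]}$ between $C^{1}$ approximations from above and below.
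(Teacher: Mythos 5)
Your overall strategy --- decompose $\chi_{[a,b]}$ into a difference of monotone indicators and extend Proposition \ref{l:limF} to bounded monotone $F$ by passing the monotonicity of $t\mapsto J_F(t)$ to the limit --- is genuinely different from the paper's, which approximates $\chi_{[a,b]}$ itself from above by smooth, not necessarily monotone, functions $F_k$ and runs a Cauchy-sequence argument applying Proposition \ref{l:limF} to each $F_k$. However, there is a gap in your version, located precisely at the point you advertise as an advantage. The function $\chi_{(b,\infty)}$ is lower, not upper, semicontinuous at $b$: any continuous majorant $F_n\ge\chi_{(b,\infty)}$ satisfies $F_n(b)\ge\limsup_{s\to b+}F_n(s)\ge 1$, while $\chi_{(b,\infty)}(b)=0$. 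Hence no sequence of continuous (let alone $C^1$) functions decreasing to $\chi_{(b,\infty)}$ can converge pointwise on all of $\mathbb{R}$; convergence fails exactly at $s=b$, and consequently $F_n(\sigma(z))\to\chi_{(b,\infty)}(\sigma(z))$ fails for $z\in\sigma^{-1}(\{b\})$. Your dominated convergence step $J_{F_n}(t)\to J_{\chi_{(b,\infty)}}(t)$ therefore does require $\mathrm{meas}\,\sigma^{-1}(\{b\})=0$ --- exactly the hypothesis you claim to avoid, and one the standing assumptions do not supply, since $\sigma$ is only locally Lipschitz and may be constant, equal to $b$, on an interval. As written, your argument proves the existence of the limit for $J_{\chi_{[b,\infty)}}$ rather than $J_{\chi_{(b,\infty)}}$, i.e.\ the conclusion for $\chi_{[a,b)}$ rather than $\chi_{[a,b]}$, and these can genuinely differ when $\sigma^{-1}(\{b\})$ has positive measure.

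The repair is short and stays within your framework: since $\chi_{(b,\infty)}$ is lower semicontinuous and nondecreasing, approximate it from \emph{below} by smooth nondecreasing $G_n\uparrow\chi_{(b,\infty)}$ pointwise everywhere (for instance $G_n=0$ on $(-\infty,b]$, rising smoothly to $1$ on $[b,b+\frac{1}{n}]$). Each $J_{G_n}$ is still nonincreasing in $t$ because $G_n$ is nondecreasing, dominated convergence now applies without any measure-zero hypothesis, and the pointwise limit $J_{\chi_{(b,\infty)}}$ is therefore nonincreasing and bounded, hence convergent. Equivalently, use the decomposition $\chi_{[a,b]}=\chi_{[a,\infty)}+\chi_{(-\infty,b]}-1$, in which both indicators are upper semicontinuous and monotone (one nondecreasing, one nonincreasing, so the corresponding $J$'s are respectively nonincreasing and nondecreasing in $t$), and note that $J_1(t)=\mu-1$ is constant. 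For comparison, the paper sidesteps the issue entirely because $\chi_{[a,b]}$ itself is upper semicontinuous and so admits smooth majorants converging to it pointwise everywhere; the price paid there is that these majorants are not monotone, which is why Proposition \ref{l:limF} is proved for arbitrary $C^1$ functions via the perturbation $h(z)=z+\varepsilon F(z)$ and the energy equation, whereas your route needs only the monotone case of that proposition.
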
  
\begin{proof}
It suffices to show that for any sequence $t_j\to\infty$ the sequence $\int_0^1\int_0^{p(x,t_j)}\chi_{[a,b]}(\sigma(z))\,dz$ is Cauchy. Let $F_k$ be a sequence of smooth functions with $F_{k+1}\leq F_k$ and $F_k(s)\to \chi_{[a,b]}(s)$ for all $s$. Then
\begin{eqnarray*}\label{chiproof}
&&\left|\int_0^1\int_0^{p(x,t_j)}\chi_{[a,b]}(\sigma(z))\,dz-\int_0^1\int_0^{p(x,t_l)}\chi_{[a,b]}(\sigma(z))\,dz\right|=\left|\int_0^1\int_{p(x,t_l)}^{p(x,t_j)}\chi_{[a,b]}(\sigma(z))\,dz\right|\\
&&\hspace{.5in}\leq  \left|\int_0^1\int_{p(x,t_l)}^{p(x,t_j)}F_k(\sigma(z))\,dz\right|+\left|\int_0^1\int_{p(x,t_l)}^{p(x,t_j)}[\chi_{[a,b]}(\sigma(z))-F_k(\sigma(z))]\,dz\right|.
\end{eqnarray*}
Since there is a constant $\delta>0$ such that $\delta\leq p(x,t)\leq 1/\delta$ for a.e. $x\in(0,1)$ and all large $t$, the second integral is bounded above by
$$\int_\delta^{\frac{1}{\delta}}[F_k(\sigma(z))-\chi_{[a,b]}(\sigma(z))]\,dz$$ and so, given $\varepsilon>0$, is less than $\varepsilon$ for sufficiently large $k$. But for any such $k$ the first term tends to zero as $j,l\to \infty$ by Proposition \ref{l:limF}, and the result follows.
\end{proof}

\subsubsection{A special cubic case}\label{s:example}

Before dealing with more general cases we give a direct proof of stabilization   when $\sigma = p^{3} - p$, with corresponding $W(p) = \frac{1}{4} (p^{2} - 1)^{2}$,  which as explained in the introduction is of interest in various applications. Of course, this case does not satisfy all of our assumptions (in particular (H2)). However, the proof of Theorem \ref{t:exisuniq} can easily be adapted to get the existence of a semiflow on $X_1=\{q\in L^2(0,1): \int_0^1q\,dx=\mu\}$, and the solution $p=p(x,t)$ satisfies the universal upper bound $|p(x,t)|\leq E(t)$ for all $t>0$.

\begin{proposition}\label{p:ex} 
Let $\sigma(p)=p^3-p$,  $\mu\neq 0$ and  $p_0\in X_1$. Then the unique solution $p=p(x,t)$ to \eqref{peqn} with $p(\cdot,0)=p_0$ satisfies
\[p(x,t) \rightarrow \bar{p}(x)\,\,\,\text{for}\,\,\,\mathrm{a.e.}\,\,x \in (0,1)\,\,\text{as}\,\,\,\,t \to \infty,\] for some equilibrium solution  $\bar {p} \in L^{2}(0,1)$.
\end{proposition}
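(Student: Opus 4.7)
First, although $\sigma(p) = p^{3} - p$ fails (H2), it is locally Lipschitz and $W(p) = \frac{1}{4}(p^{2}-1)^{2}$ is $\lambda$-convex, so the construction of Section \ref{s:both-ends-fixed} adapts to give a semiflow on $X_{1} = \{q \in L^{2}(0,1) : \int_{0}^{1} q\,dx = \mu\}$ satisfying the universal upper bound $|p(x,t)| \leq E(t)$ announced in the excerpt. By the argument in the proof of Proposition \ref{p:conveqsubseq}, using the rearrangement identity $p(x,t) = p^{*}(\delta(x),t)$ from Lemma \ref{l:monotonicity}, I may assume $p_{0}$ is nondecreasing, so that every subsequential limit $\bar p(x) = \lim_{k} p(x, t_{j_{k}})$ is a nondecreasing equilibrium with $\sigma(\bar p(x)) \equiv c_{\bar p}$ constant. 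Since $\sigma = p^{3} - p$ is cubic, $\bar p$ takes at most three distinct values, namely the roots $r_{1} \leq r_{2} \leq r_{3}$ of $z^{3} - z = c_{\bar p}$, on sets of measures $a_{1}, a_{2}, a_{3} \geq 0$ with $\sum_{j} a_{j} = 1$ and $\sum_{j} a_{j} r_{j} = \mu$; in particular, $\bar p$ lies in a two-parameter family.

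The invariants needed to single out $\bar p$ come from Proposition \ref{l:limF} and Corollary \ref{charfn}: for every $F \in C^{1}(\mathbb{R})$,
\[
L(F) \;=\; \lim_{t\to\infty}\int_{0}^{1}\int_{1}^{p(x,t)} F(\sigma(z))\,dz\,dx \;=\; \sum_{j} a_{j} \int_{1}^{r_{j}} F(\sigma(z))\,dz
\]
is the same for every subsequential limit. Taking $F(u) = u^{k}$ and exploiting the equilibrium identity $r_{j}^{3} = r_{j} + c_{\bar p}$ reduces every moment $m_{n} := \sum_{j} a_{j} r_{j}^{n}$ to a polynomial in the two unknowns $(c_{\bar p}, m_{2})$: for instance $m_{3} = \mu + c_{\bar p}$, $m_{4} = m_{2} + c_{\bar p}\mu$, $m_{5} = \mu + c_{\bar p}(1 + m_{2})$, and so on. A direct calculation using $F(u)=u$ yields $4L_{1} = c_{\bar p}\mu - m_{2} + 1$, and substituting the resulting $m_{2} = c_{\bar p}\mu - 4L_{1}+1$ into the expression obtained from $F(u) = u^{2}$ produces a polynomial equation in $c_{\bar p}$ with leading coefficient $\mu/35$. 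Using one or two further invariants from $F(u) = u^{3}, u^{4}, \ldots$ (equivalently, Corollary \ref{charfn} with varying $[a,b]$), together with the positivity constraints $a_{j}\geq 0$ and the ordering $r_{1} \leq r_{2} \leq r_{3}$, one then pins down $c_{\bar p}$ uniquely; the $r_{j}$ follow from $z^{3} - z = c_{\bar p}$ and the $a_{j}$ from the Vandermonde system given by $(m_{0}, m_{1}, m_{2})$.

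I expect the hardest step to be this last one: verifying rigorously that the finite algebraic system produced by $L_{1}, L_{2}, L_{3}, \ldots$ has a unique admissible solution $(c_{\bar p}, a_{1}, a_{2}, a_{3})$. It is precisely here that the hypothesis $\mu \neq 0$ enters, since the leading coefficient $\mu/35$ of the quadratic in $c_{\bar p}$ produced by $L_{2}$ vanishes when $\mu = 0$; this reflects the odd symmetry $\sigma(-p) = -\sigma(p)$ and the identity $r_{1}+r_{2}+r_{3} = 0$, which for $\mu = 0$ allow distinct equilibria to share the same conservation data. Once $\bar p$ is identified as the only element of the $\omega$-limit set, the adapted Proposition \ref{p:conveqsubseq} gives $p^{*}(\cdot, t) \to \bar p$ in $L^{2}(0,1)$, and the rearrangement identity $p(x,t) = p^{*}(\delta(x),t)$ delivers $p(x,t) \to \bar p(\delta(x))$ for a.e.\ $x \in (0,1)$, as asserted.
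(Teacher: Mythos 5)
Your setup (the semiflow on $X_1$, the universal bound, the subsequential equilibria from Proposition \ref{p:conveqsubseq}, and the invariants $L(F)$ from Proposition \ref{l:limF}) matches the paper's, and your computation with $F(u)=u$ and $F(u)=u^{2}$ reproduces, in moment notation, the paper's quadratic \eqref{e:polysigma} for $c_{\bar p}=\bar\sigma$ with leading coefficient proportional to $\mu$. But the core of your argument --- that ``one or two further invariants \ldots together with the positivity constraints \ldots then pins down $c_{\bar p}$ uniquely,'' after which a Vandermonde system determines $(a_{1},a_{2},a_{3})$ --- is asserted rather than proved, and you yourself flag it as the hardest step. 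That is a genuine gap: nothing you write rules out that both roots of the quadratic admit consistent data compatible with all higher invariants, and the higher moments are not independent pieces of information (each $m_{n}$ reduces via $r_{j}^{3}=r_{j}+c_{\bar p}$ to a polynomial in $c_{\bar p}$ and $m_{2}$, so they need not break a tie between the two roots). The paper avoids this entirely with a soft argument you are missing: $\omega(p_{0})$ is connected and $\chi\mapsto\int_{0}^{1}\sigma(\chi)\,dx$ is continuous on it, so once the quadratic restricts $\bar\sigma$ to at most two values (this is where $\mu\neq 0$ enters), it must take exactly one value; no identification of the $a_{j}$ or $r_{j}$ is ever needed.

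The second omission is the finishing step. Uniqueness of $\bar\sigma$ only gives $c(t)=\int_{0}^{1}\sigma(p(x,t))\,dx\to\bar\sigma$; the paper converts this into the asserted a.e.\ pointwise convergence by viewing, for each fixed $x$, $p_{t}=-\sigma(p)+\bar\sigma+e(t)$ with $e(t)\to 0$ and invoking the scalar ODE lemma of Pego (Lemma \ref{ncpego}), which requires no knowledge of which root of $z^{3}-z=\bar\sigma$ each trajectory selects. Your route instead needs the full equilibrium to be the unique element of $\omega(p_{0}^{*})$, and even granting that, $L^{2}$ convergence of $p^{*}(\cdot,t)$ yields the claimed a.e.\ convergence only after a further argument (e.g.\ that monotone functions converging in $L^{2}$ to a monotone limit converge at its continuity points), which you do not supply. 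In short: the skeleton and the key quadratic are right, but the uniqueness claim at the heart of your argument is unproven, and the connectedness and ODE devices the paper uses to close the proof are absent.
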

\begin{proof}
Taking $F(s) = s^{2}$ and using the given form of $\sigma$   we obtain
\begin{eqnarray*}
& & \int_{0}^{1} \int_{1}^{p(x,t)} F(\sigma(z))\,dz\,dx = \int_{0}^{1} \int_{1}^{p(x,t)} (z^{3}- z)^{2}\,dz\,dx \\
& & \qquad \quad =\, \int_{0}^{1} \left(\frac{p^{7}}{7} - \frac{2}{5} p^{5} + \frac{p^{3}}{3} - \frac{8}{105} \right)\,dx.
\end{eqnarray*}
By Lemma \ref{l:limF} we deduce that 
\begin{equation}\label{e:limpoly}
\underset{t \to \infty}{\lim}\,\int_{0}^{1}\left(\frac{p^{7}}{7} - \frac{2}{5} p^{5} + \frac{p^{3}}{3}\right)\,dx =K_1
\end{equation}
for some constant $K_1$. 
  We can rewrite $p^{3}, p^{5}$ and $p^{7}$ in terms of $\sigma(p)$ and $p$ as
\begin{equation}\label{e:trans}
\left\{ \begin{array}{ll}
p^{3} = \sigma(p) + p, & {} \\
p^{5} = \sigma(p)\,p^{2} + \sigma(p) + p, & {} \\
p^{7} = \sigma^{2}(p)\,p + 2\,\sigma(p)\,p^{2} + \sigma(p) + p.
\end{array}\right.
\end{equation}
Substituting into \eqref{e:limpoly} we get
\begin{equation}\label{e:limfromF}
\underset{t \to \infty}{\lim}\int_0^1 \left( \frac{-4}{35}  \sigma(p)\,p^{2}  + \frac{8}{105}\,\mu + \frac{1}{7}   \sigma^{2}(p)\,p  + \frac{8}{105}   \sigma(p)  \right)\,dx=K_1.
\end{equation}
On the other hand, we have
\begin{equation*}
\int_{0}^{1} W(p)\,dx = \int_{0}^{1} \frac{1}{4} (p^{2} - 1)^{2}\,dx  = \frac{1}{4} \int_{0}^{1}\left( \sigma(p)\,p -  p^{2} + 1\right)\,dx.
\end{equation*}
Therefore, by (\ref{energyeq}) (or Lemma \ref{l:limF} with $F(s)=s$)  we deduce that
\begin{equation}\label{e:limenergy}
\underset{t \to \infty}{\lim} \int_{0}^{1}\left( \sigma(p)\,p  -  p^{2}\right)\,dx=K_2
\end{equation}
for some constant $K_2$.
By Proposition \ref{p:conveqsubseq} we know that for a subsequence $t_{j}$ there exists an equilibrium solution $\bar{p}=\bar p(x)$ such that
\[\underset{j \to \infty}{\lim}\,p(x,t_{j}) = \bar{p}(x) \;\;\;\mbox{for a.e. }x\in(0,1).\]
Denoting $\sigma(\bar{p}(x)) = \bar{\sigma}$ and letting $t_{j} \to \infty$ in (\ref{e:limfromF}) and (\ref{e:limenergy}), we obtain using the universal upper bound that
\begin{displaymath}
\left\{\begin{array}{ll}
\displaystyle\frac{-4}{35}\,\bar{\sigma} \displaystyle\int_{0}^{1} \bar {p}^{2}(x)\,dx + \frac{8}{105}\,\mu + \frac{\mu}{7} \,\bar{\sigma}^{2} + \frac{8}{105} \,\bar{\sigma}  = K_{1}, & {} \\
\bar{\sigma}\,\mu - \displaystyle\int_{0}^{1} \bar{p}^{2}(x)\,dx = K_{2}. & {}
\end{array}\right.
\end{displaymath}
  Substituting the second equation into the first  leads to
\begin{equation}\label{e:polysigma}
\mu\bar\sigma^2 - \left(\frac{8}{3} + 4 K_2\right) \bar\sigma - \frac{8}{3} \mu +35 K_1=0.
\end{equation}
This is a second order polynomial in $\bar{\sigma}.$ Hence, since $\mu\neq 0$ and the constants $K_1, K_2$ do not depend on the sequence $t_j$, solving (\ref{e:polysigma}) gives  at most two distinct possible values for $\bar{\sigma}$ in   $\omega(p_0)$.   But there cannot be just two distinct such values since $\omega(p_0)$ is connected. Therefore
\[\sigma(p(x,t)) \rightarrow \bar{\sigma}\;\;\text{as}\,\,\,t \to \infty\]
for a.e. $x\in (0,1)$.
By Corollary \ref{c:bounds}, (H1), and Lebesgue's dominated convergence theorem, this gives 
\begin{equation}\label{e:uniqconv}\nonumber
\int_{0}^{1} \sigma(p(x,t))\,dx\,\rightarrow\,\bar{\sigma}\,\,\,\,\text{as}\,\,\,t \to \infty .
\end{equation}
Thus $p$ satisfies for a.e. $x\in(0,1)$ the ordinary differential equation
$$p_{t}(x,t)\,=\,-\sigma(p(x,t)) + \bar{\sigma} + e(t),$$
where  $e(t) \to 0$ as $t \to \infty$. The result then follows from  
\begin{lemma}{\cite[Lemma 3.4]{Pego-91}}\label{ncpego}
Assume $f: \mathbb{R} \to \mathbb{R}$ is continuous and not constant on any open interval. Assume that $z(t) \in C^{1}(0,\infty)$ is a bounded solution of $z'(t) = f(z(t)) + e(t),$ where $e(t)$ is continuous with $\lim_{t \to \infty} e(t) = 0.$ Then $\lim_{t \to \infty} z(t) = z_{\infty}$ exists, and $f(z_{\infty}) = 0.$
\end{lemma}
\end{proof}

\subsubsection{The nondegeneracy condition}\label{s:genex}
In this subsection we discuss a slightly modified version of the nondegeneracy condition introduced in \cite{And-Ball} and show why it leads to convergence of the solution $p$ to a unique equilibrium as $t\to\infty$. We state this condition as follows:\\

\noindent {\bf Nondegeneracy condition (NC)}

(a) $\sigma:(0,\infty)\to \mathbb R$ is $C^1$ and $\mbox{meas} \mathcal S=0$, where $\mathcal S=\{z:\sigma'(z)=0\}$ is the set of critical points of $\sigma$.

(b)  If $[\alpha,\beta]$ with $\alpha<\beta$ is a closed interval with $[\alpha,\beta]\cap\sigma(\mathcal S)=\emptyset$, and if $p_i, 1\leq i\leq 2k+1, \;k=k(\alpha,\beta)\geq 0$, are the distinct inverse functions to $\sigma$ on $[\alpha,\beta]$, then the derivatives $\{p_i'\}_{1\leq i\leq 2k+1}$ are linearly independent in $C^0([\alpha,\beta])$.

Note that by Sard's theorem $\sigma(\mathcal S)$ is a closed set of measure zero, so that such closed intervals $[\alpha,\beta]$ exist. By (H2), (U1), (U2), on each such interval there are an odd number of distinct inverse functions  and each inverse function is $C^1$.

\begin{proposition}
\label{nondegenthm}
If the nondegeneracy condition holds then each solution $p$ of $(P)$ with $p_0\in X$ converges in $L^2(0,1)$ to an equilibrium $\bar p\in \mathcal E_\mu$ as $t\to\infty$.
\end{proposition}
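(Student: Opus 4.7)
The plan is to upgrade convergence to the $\omega$-limit set established in Proposition \ref{p:conveqsubseq} to convergence to a single equilibrium, by showing $\omega(p_0)$ is a singleton. First I would reduce to the case of nondecreasing initial data: by Lemma \ref{l:monotonicity} and the identity $p(x,t) = p^*(\delta(x),t)$ from the proof of Proposition \ref{p:conveqsubseq}, it suffices to prove that $\omega^*(p_0) := \omega(p_0^*)$ is a single point. Each $\bar p^* \in \omega^*(p_0)$ is a nondecreasing equilibrium with $\sigma(\bar p^*)\equiv \bar\sigma$. Provided $\bar\sigma \notin \sigma(\mathcal S)$ (the generic case, since by NC(a) and Sard's theorem $\sigma(\mathcal S)$ has Lebesgue measure zero), $\bar p^*$ takes the values $p_1(\bar\sigma) < \cdots < p_{2k+1}(\bar\sigma)$ on $2k+1$ consecutive intervals of lengths $\lambda_1,\ldots,\lambda_{2k+1}$, subject to $\sum_i \lambda_i = 1$ and $\sum_i \lambda_i p_i(\bar\sigma) = \mu$.

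The main input is the family of conserved quantities supplied by Proposition \ref{l:limF} and Corollary \ref{charfn}: for every $F\in C^1(\mathbb{R})$ or $F = \chi_{[a,b]}$, the functional
\begin{equation*}
I_F(\bar p^*) \;=\; \sum_{i=1}^{2k+1} \lambda_i \int_1^{p_i(\bar\sigma)} F(\sigma(z))\, dz
\end{equation*}
is constant on $\omega^*(p_0)$. Assuming for contradiction that $\omega^*(p_0)$ is not a point, I would use that $\omega^*(p_0)$ is compact and connected (standard for $\omega$-limit sets of precompact orbits), so that its image under $\bar p^* \mapsto \sigma(\bar p^*)$ is a closed interval $[\alpha_0,\beta_0]$. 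Comparing pairs of nearby equilibria in $\omega^*(p_0)$ and passing to a limiting difference quotient, I would extract a nontrivial infinitesimal tangent direction $(\dot{\bar\sigma}, \dot\lambda_1,\ldots,\dot\lambda_{2k+1})$ satisfying $\sum_i \dot\lambda_i = 0$, $\sum_i \dot\lambda_i p_i(\bar\sigma) + \dot{\bar\sigma}\sum_i \lambda_i p_i'(\bar\sigma) = 0$, and
\begin{equation*}
\sum_i \dot\lambda_i \int_1^{p_i(\bar\sigma)} F(\sigma(z))\, dz \;+\; \dot{\bar\sigma}\, F(\bar\sigma) \sum_i \lambda_i p_i'(\bar\sigma) \;=\; 0
\end{equation*}
for every admissible $F$.

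Choosing $F$ supported in a small interval $[a,b] \subset [\alpha,\beta] \setminus \{\bar\sigma\}$ with $[\alpha,\beta]\cap\sigma(\mathcal S) = \emptyset$ makes $F(\bar\sigma) = 0$, and after the change of variables $u = \sigma(z)$ on each inverse branch the third identity collapses to $\sum_j c_j(a,b)\,|p_j'(u)| = 0$ a.e.\ on $[a,b]$, where $c_j(a,b)$ is a signed sum of the $\dot\lambda_i$ over the branches $R_j$ lying between $1$ and $p_i(\bar\sigma)$. Since each $p_j'$ has constant sign on $[a,b]$, the linear independence in NC(b) forces $c_j(a,b) = 0$ for every $j$, and varying the combinatorial position of $[a,b]$ relative to $\bar\sigma$ and the branches yields enough independent relations to conclude $\dot\lambda_i = 0$ for every $i$. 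The $\mu$-constraint derivative then gives $\dot{\bar\sigma}\sum_i \lambda_i p_i'(\bar\sigma) = 0$; if $\dot{\bar\sigma}\ne 0$, then $\sum_i \lambda_i p_i'(c) = 0$ identically on a nondegenerate sub-interval, whence a second application of NC(b) forces $\lambda_i\equiv 0$, contradicting $\sum_i\lambda_i = 1$. Hence $\omega^*(p_0)$ is a single point, and $T(t)p_0 \to \bar p$ in $L^2(0,1)$ by the attraction property \eqref{attraction}.

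The main obstacle is making the ``differentiation along curves in $\omega^*(p_0)$'' rigorous, since $\omega^*(p_0)$ is only known to be compact and connected and need not be a smooth submanifold of $L^2(0,1)$. In practice this requires replacing tangent vectors by finite differences $\lambda^{(1)} - \lambda^{(2)}$ between pairs of elements of $\omega^*(p_0)$, together with careful bookkeeping of the combinatorial index sets $\mathcal J_i(a,b) = \{j : R_j \subset [1, p_i(\bar\sigma)]\}$ as $[a,b]$ moves across the various regions defined by the branches of $\sigma^{-1}$ and by $\bar\sigma$. A secondary issue is the treatment of possible equilibria with $\bar\sigma \in \sigma(\mathcal S)$: by Sard's theorem this is a measure-zero subset of $[\alpha_0,\beta_0]$, and an approximation argument using continuity of the conserved quantities is needed to rule out $\omega^*(p_0)$ being a finite union of components with distinct degenerate $\bar\sigma$.
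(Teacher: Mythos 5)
Your proposal correctly identifies the two essential ingredients (the conserved quantities of Proposition \ref{l:limF} and Corollary \ref{charfn}, and the linear independence in (NC)(b) as the mechanism that kills the remaining degrees of freedom), but the step that is supposed to combine them --- extracting a nontrivial ``infinitesimal tangent direction'' $(\dot{\bar\sigma},\dot\lambda_1,\dots,\dot\lambda_{2k+1})$ to $\omega^*(p_0)$ and differentiating $I_F$ along it --- is a genuine gap, not a technicality to be cleaned up afterwards. A compact connected subset of $L^2(0,1)$ need not admit tangent directions anywhere; the normalized differences $\lambda^{(1)}-\lambda^{(2)}$ between nearby elements need not converge, and even along a subsequence the limiting relations you write down require differentiability of the parametrization $(\bar\sigma,\lambda)$ of $\omega^*(p_0)$, which you have not established and which is exactly the point at issue. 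There is also a concrete slip at the end: from a single relation $\dot{\bar\sigma}\sum_i\lambda_i p_i'(\bar\sigma)=0$ at one value $\bar\sigma$ you cannot conclude $\sum_i\lambda_i p_i'(c)=0$ identically on a nondegenerate interval, which is what (NC)(b) needs as input; and your argument does not address the case where all of $\omega^*(p_0)$ has the same $\bar\sigma$ but varying volume fractions, where there is no $\dot{\bar\sigma}$ to exploit at all.

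The paper's proof avoids the $\omega$-limit set geometry entirely and works in the time domain with finite differences. It reduces the problem to showing that $c(t)=\int_0^1\sigma(p(x,t))\,dx$ converges, after which Lemma \ref{ncpego} gives pointwise convergence. If $c(t)$ does not converge, continuity forces it to sweep an entire interval $[r,s]$, chosen disjoint from $\sigma(\mathcal S)$ by Sard, at arbitrarily large times. One then defines the measures $\mu_i(t)$ of the sets where $p(\cdot,t)$ is close to the branch $p_i(c(t))$, shows $\sum_i\mu_i(t)\to 1$ using Proposition \ref{p:convtimeder}, and uses Corollary \ref{charfn} to show that certain linear combinations of the partial sums $\nu_j(t)$ with coefficients $p_{2r}(a)-p_{2r+1}(a)$ and $p_{2r-1}(b)-p_{2r}(b)$ converge. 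The key use of (NC)(b) is then applied to the \emph{difference of two subsequential limits} $\bar\nu^1-\bar\nu^2$ (varying $a$ and $b$ and differentiating in those endpoint parameters, where differentiability is genuine because the $p_i$ are $C^1$ functions of $c$), forcing each $\mu_i(t)$ to converge to a constant $\bar\mu_i$ independent of which value $c(t)$ is passing through. The contradiction then comes from $\sum_i\bar\mu_i\,p_i(c)=\mu$ holding for all $c\in[\bar r,\bar s]$ with the same constants $\bar\mu_i$, which violates (NC)(b) since $\sum_i\bar\mu_i=1$. If you want to salvage your approach, the fix is to replace tangent vectors throughout by these finite differences of subsequential limits of the time-dependent volume fractions, which is essentially to rewrite your argument into the paper's.
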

\begin{proof}
We both simplify and explain more fully the proof in \cite{And-Ball}.
It suffices to show that 
$$c(t)=\int_0^1\sigma(p(x,t))\,dx$$
tends to a limit as $t\to\infty$, since then we can apply Lemma \ref{ncpego}. Assume for contradiction that this is not the case. Since $c(t)$ is bounded for large $t$, there exists an interval $[r,s]$ with $r<s$ such that $c(t)$ takes the values $r$ and $s$ for arbitrarily large values of $t$. Since $c(t)$ is continuous for $t>0$, by Sard's theorem we may assume that $[r,s]\cap\sigma(\mathcal S)=\emptyset$. We then have that the graph of $\sigma$ crosses the interval $[r,s]$ in an odd number $2k+1$ of segments with alternately strictly positive and strictly negative derivatives. If  $k=0$ then we have that for any $c\in[\alpha,\beta]$ there is a sequence $t_j\to\infty$ such that $c(t_j)=c$ for all $j$. By Proposition \ref{p:convtimeder} we have that $\sigma(p(\cdot,t_j))\to c$ in $L^2(0,1)$, and since $\sigma$ is strictly monotone on $[\sigma^{-1}(\alpha),\sigma^{-1}(\beta)]$, it follows that $p(\cdot,t_j)\to \sigma^{-1}(c)$ in $L^2(0,1)$. But then $\int_0^1p(x,t_j)\,dx=\mu=\sigma^{-1}(c)$ for all $c\in[\alpha,\beta]$, a contradiction. Thus we may suppose that $k\geq 1$.  Let $p_i:[r,s]\to\mathbb R,\, 1\leq i\leq 2k+1$ denote the corresponding inverse functions to $\sigma$, which are $C^1$. Thus we have that if $k\geq 1$ then $p_{1}(r)<p_{1}(s)<p_{2}(s)<p_{2}(r)<\cdots <p_{2k+1}(r)<p_{2k+1}(s)$.

 Let $r<\bar r<\bar s<s$ and 
 for $\varepsilon>0$ sufficiently small and $t$ such that $c(t)\in [\bar r,\bar s]$ define 
\begin{equation*}
S_{i}(t) = \{x \in (0,1)\,:\,\,|p(x,t) - p_{i}(c(t))| < \varepsilon\,\},
\end{equation*}
and set $\mu_i(t)=\mbox{meas }S_i(t)$. Then the sets  $S_{i}(t)$ are disjoint, and we claim that 
\begin{equation}\label{e:vol-frac-lim}
\underset{c(t) \in [\bar r ,\bar s]}{\underset{t \to \infty}{\lim}} \sum_{i = 1}^{2k+1} \mu_{i}(t) = 1.
\end{equation}
Indeed,    by the universal bounds there is a $\delta>0$ with $\delta\leq p(x,t)\leq 1/\delta$ for a.e. $x\in (0,1)$ and all large $t$, and there exists $\rho>0$ such that $|\sigma(q)-\gamma|>\rho$ whenever $\gamma\in[\bar r,\bar s]$ and $|q-p_i(\gamma)|\geq\varepsilon$. Hence 
\begin{equation}\label{meas}
\mbox{meas}\,\left((0,1)\setminus \bigcup_{i=1}^{2k+1}S_i(t)\right)\leq\mbox{meas}\,(\{x\in (0,1):|\sigma(p(x,t))-c(t)|>\rho\}).
\end{equation}
But by Proposition \ref{p:convtimeder}, $\sigma(p(\cdot,t))-c(t)\to 0$ in measure as $t\to\infty$, so that the right-hand side of \eqref{meas} tends to zero as $t\to\infty$, proving the claim.

We apply Corollary \ref{charfn} with $a,b$ chosen so that $r<a<\bar r<\bar s<b<s$. Thus 
$$\underset{c(t)\in [\bar r,\bar s]}{\lim_{t\to\infty }}\int_0^1\int_0^{p(x,t)}\chi_{[a,b]}(\sigma(z))\,dz\,dx=\underset{c(t)\in [\bar r,\bar s]}{\lim_{t\to\infty }}\sum_{i=1}^{2k+1}\int_{S_i(t)}\int_0^{p(x,t)}\chi_{[a,b]}(\sigma(z))\,dz\,dx$$
exists, where we have used \eqref{e:vol-frac-lim} and the boundedness of $p$. Thus, by a similar argument to that above,
\begin{equation}\label{limsumchi}\underset{c(t)\in [\bar r,\bar s]}{\lim_{t\to\infty }}\sum_{i=1}^{2k+1}\mu_i(t)\int_0^{p_i(c(t))}\chi_{[a,b]}(\sigma(z))\,dz:=l(a,b)  \mbox{   exists}.
\end{equation}
Now define $$\nu_j(t)=\sum_{l=j}^{2k+1}\mu_l(t), \;\; 1\leq j\leq 2k+1.$$
Then $\mu_{2k+1}(t)=\nu_{2k+1}(t)$ and $\mu_j(t)=\nu_j(t)-\nu_{j+1}(t)$ for $1\leq j\leq 2k$. Thus, by \eqref{limsumchi} the limit as $t\to\infty$ with $c(t)\in[\bar r,\bar s]$ of
\begin{eqnarray*}
&& \sum_{i=1}^{2k+1} \mu_{i}(t)\, \mbox{meas}\,\{[0, p_i(c(t))] \cap \sigma^{-1}([a,b])\} \\
&&\hspace{.5in} = \nu_{2k+1}(t)\, \mbox{meas}\,\{[0, p_{2k+1}(c(t))] \cap \sigma^{-1}([a,b])\} \\
&& \hspace{1in} + \sum_{i=1}^{2k} (\nu_{i}(t) - \nu_{i+1}(t))\, \mbox{meas}\,\{[0,p_{i}(c(t))] \cap \sigma^{-1}([a,b])\}
\end{eqnarray*}
exists and equals $l(a,b)$. After separating the odd and the even terms and using the relation $\sum_{i=1}^{2k+1} \mu_{i}(t) p_{i}(c(t)) = \mu$ we obtain that
\begin{equation}\label{limnu}\underset{c(t)\in [\bar r,\bar s]}{\lim_{t\to\infty }} \sum_{r=1}^{k} \big(\nu_{2r+1}(t) (p_{2r}(a) - p_{2r+1}(a)) + \nu_{2r}(t) (p_{2r-1}(b) - p_{2r}(b))\big)\end{equation}
exists and equals $l(a,b)+p_1(a)-\mu$. We claim that \eqref{limnu} implies  
\begin{equation}\label{limnui}
\underset{c(t)\in [\bar r,\bar s]}{\lim_{t\to\infty }}\nu_i(t):=\bar\nu_i \;\;\;\mbox{   exists for  }1\leq i \leq 2k+1.
\end{equation}
Suppose not.  Then, since the $\nu_i(t)$ are bounded, there are sequences $s_j,t_j\to\infty$ with $s_j, t_j\in[\bar r,\bar s]$ such that the limits $\lim_{j\to\infty}\nu_i(s_j)=\bar\nu_{i}^1$ and $\lim_{j\to\infty}\nu_i(t_j)=\bar\nu_{i}^2$ exist for $1\leq i\leq 2k+1$, but for some $n$ we have  $\bar\nu_{n}^1\neq\bar\nu_{n}^2$. If $n$ is odd, then fixing $b$ and varying $a$ in the interval $[r,\bar r]$ we get
$$\sum_{r=1}^k (\bar\nu_{2r+1}^1-\bar\nu_{2r+1}^2)(p_{2r}(a) - p_{2r+1}(a))=0.$$
Differentiating with respect to $a$ and using (NC)(b) we get that $\bar\nu_{2r+1}^1=\bar\nu_{2r+1}^2$ for $1\leq r\leq k$, a contradiction. We obtain a similar contradiction if $n$ is even, this time fixing $a$ and varying $b$ in the interval $[\bar s, s]$, establishing \eqref{limnui}. Hence $\lim_{t\to\infty, c(t)\in [\bar r,\bar s]}\mu_i(t):=\bar\mu_i$ exists for each $i$, and $\sum_{i=1}^{2k+1}\bar\mu_i=1$ by \eqref{e:vol-frac-lim}.
But 
\begin{eqnarray*}
\underset{c(t)\in [\bar r,\bar s]}{\lim_{t\to\infty }}\int_0^1p(x,t)\,dx&=&\underset{c(t)\in [\bar r,\bar s]}{\lim_{t\to\infty }}\sum_{i=1}^{2k+1}\int_{S_i(t)}p(x,t)\,dx\\
&=&\underset{c(t)\in [\bar r,\bar s]}{\lim_{t\to\infty }}\sum_{i=1}^{2k+1}\mu_i(t)p_i(c(t))\\
&=&\underset{c(t)\in [\bar r,\bar s]}{\lim_{t\to\infty }}\sum_{i=1}^{2k+1}\bar\mu_ip_i(c(t))=\mu.
\end{eqnarray*}
If $c\in[\bar r,\bar s]$ then there is a sequence $t_j\to\infty$ with $c(t_j)=c$ for all $j$. Thus passing to the limit we obtain
$$\sum_{i=1}^{2k+1}\bar\mu_ip_i(c)=\mu \;\;\mbox{for all }c\in[\bar r,\bar s],$$
contradicting (NC)(b) again. This completes the proof.
\end{proof}
\begin{remark}\label{threeroots}
Suppose that $\sigma$ satisfies (NC)(a) and that for any interval $[\alpha,\beta]$ such that $[\alpha,\beta]\cap \sigma(\mathcal S)\neq\emptyset$ there are either one or three roots $p_i(c)$ of $\sigma(p)=c\in [\alpha,\beta]$. Then the proof of Proposition \ref{nondegenthm} shows that $ {\lim_{t\to\infty,\; c(t)\in [\alpha,\beta]}}\mu_i(t)$ exists, without assuming (NC)(b). In fact, supposing, as we may, that $k=1$, we have from \eqref{limnu} that
$$\underset{c(t)\in [\alpha,\beta]}{\lim_{t\to\infty }}\big(\nu_3(t)(p_2(a)-p_3(a))+\nu_2(t)(p_1(b)-p_2(b))\big)\;\;\mbox{exists}.$$
 Suppose for contradiction that $\nu_3(t)$ does not tend to a limit, so that there are sequences $s_j,t_j\to\infty$ with $s_j,t_j\in[\alpha,\beta]$ for all $j$ and such that $\nu_3(s_j)\to \bar\nu_3^1$ and $\nu_3(t_j)\to \bar\nu_3^2$ with $\bar\nu_3^1\neq\bar\nu_3^2$. Then we obtain that $(\bar\nu_3^1-\bar\nu_3^2)(p_2(a)-p_3(a))=0$ for all $a\in [\alpha,\beta]$. But $p_3(a)>p_2(a)$, a contradiction. Similarly $\nu_2(t)$ tends to a limit, and since $\sum_{i=1}^3\mu_i(t)=1$ we deduce that each $\mu_i(t)$ tends to a limit as claimed.
\end{remark}

\begin{figure}[t]
\centerline{
  \includegraphics[scale=0.8]{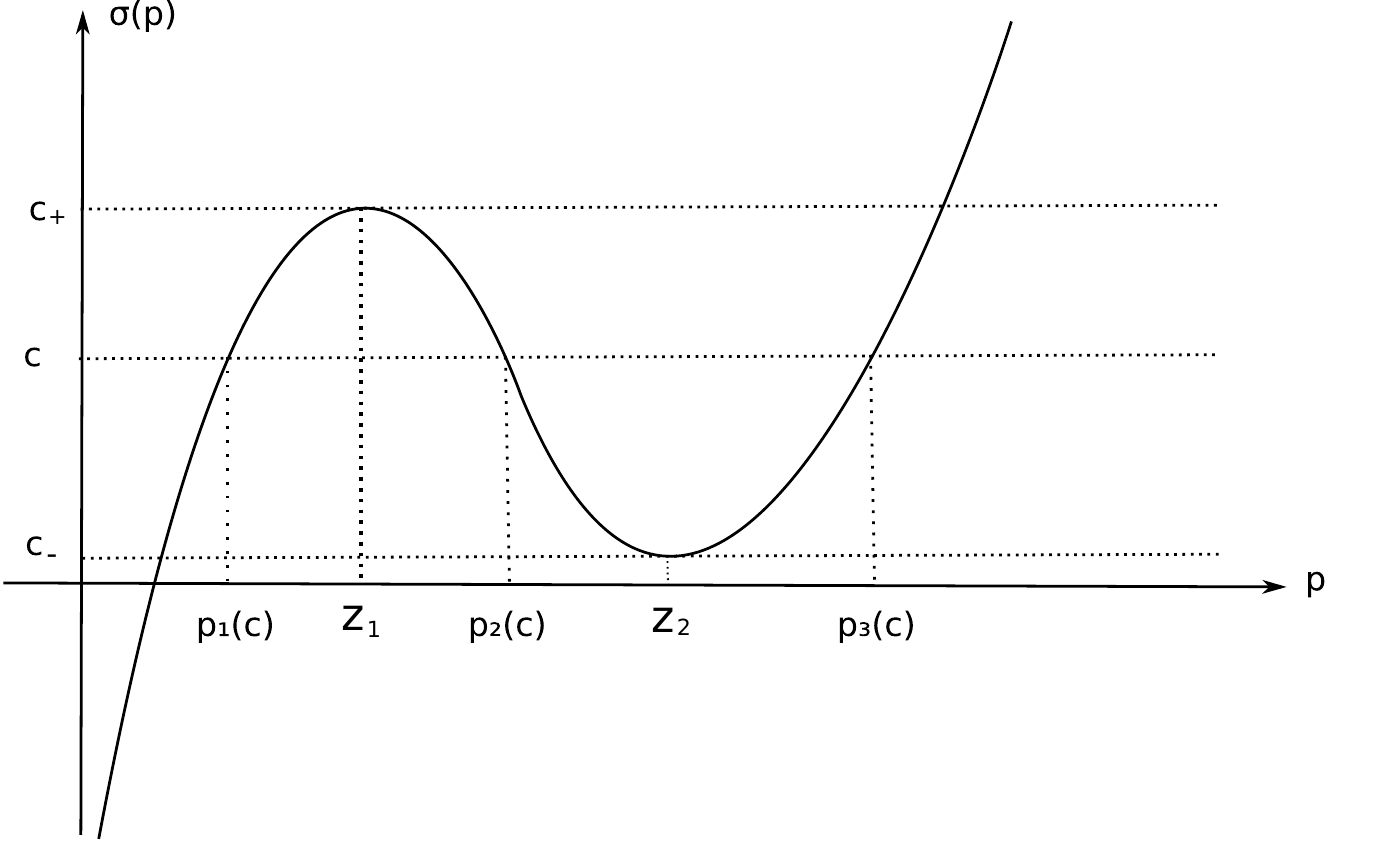}}
 \caption{Setting for the cubic-like stress}\label{stabilization}
\end{figure}

\subsubsection{The case of cubic-like real analytic $\sigma$}
In the case of a cubic-like real analytic $\sigma$ we are able to prove that a weakened form of (NC) holds, that is sufficient for establishing convergence to a unique equilibrium.  In addition to our standing assumptions we make the following hypotheses on $\sigma$:
 \vspace{.05in}

\noindent  (C1)  $\sigma:(0,\infty)\to\mathbb R$ is real analytic.\\
(C2) $\sigma$ has precisely two critical points $z_1<z_2$ with $\sigma(z_2)<\sigma(z_1)$, and $\sigma''(z_1), \sigma''(z_2)$ are nonzero.\vspace{.05in}
 
 Let $c_-=\sigma(z_2), c_+=\sigma(z_1)$. Clearly $\sigma'(z)>0$ for $z\in (0,z_1)$, $\sigma'(z)<0$ for $z\in(z_1,z_2)$ and $\sigma'(z)>0$ for $z\in(z_2,\infty)$.  By the Inverse Function Theorem, for each $c \in (c_{-}, c_{+})$ there exist exactly three roots $p_i(c)$  of $\sigma(p) = c$ with $p_{1}(c) \in (- \infty, z_{1}), p_{2}(c) \in (z_{1}, z_{2})$ and $p_{3}(c) \in (z_{2}, \infty).$ Moreover, each $p_{i}(c)$ is real analytic on $(c_{-}, c_{+})$. (See Fig. \ref{stabilization}.)  
\begin{proposition}\label{cubiclike}
Suppose {\rm (C1), (C2)} hold.  If $\mu_i, \;i=1,2,3$, are nonnegative constants such that
\[\sum_{i = 1}^{3} \mu_{i} p_{i}(c) = \mu, \,\,\, \mbox{for all } c\in[\alpha,\beta]\subset[c_-,c_+],\]
then the $\mu_i$ are all equal.
\end{proposition}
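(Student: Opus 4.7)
My plan is to use the real analyticity hypothesis (C1), combined with the non-degeneracy of the critical points from (C2), to run a monodromy argument in the complex plane.

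First I would observe that, since $\sigma$ is real analytic and $\sigma'(p_i(c))\neq 0$ for $c\in(c_-,c_+)$, each branch $p_i$ extends to a real-analytic function on the whole open interval $(c_-,c_+)$, and further to a holomorphic function on a complex neighbourhood of this interval. The hypothesis $\sum_{i=1}^3\mu_ip_i(c)=\mu$ on $[\alpha,\beta]$ therefore extends, by the identity theorem for analytic functions, to the identity
\[
\mu_1p_1(c)+\mu_2p_2(c)+\mu_3p_3(c)=\mu \qquad \text{for all } c\in(c_-,c_+),
\]
and in fact on any complex neighbourhood to which all three $p_i$ holomorphically extend.

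Next I would analyse the behaviour of the branches $p_i$ near the endpoints $c_\pm$. By (C2), $\sigma''(z_1)\neq 0$, so
$\sigma(p)-c_+=\tfrac{1}{2}\sigma''(z_1)(p-z_1)^2+O((p-z_1)^3)$ near $z_1$. Hence in a small complex neighbourhood of $c_+$ the two local solutions of $\sigma(p)=c$ have Puiseux expansions
\[
p=z_1\pm\sqrt{2(c-c_+)/\sigma''(z_1)}\,\bigl(1+O(\sqrt{c-c_+}\,)\bigr),
\]
so they are the two determinations of a two-valued holomorphic function with a square-root branch point at $c_+$. For $c\in(c_-,c_+)$ these two determinations are exactly $p_1$ and $p_2$. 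Meanwhile $p_3$ stays close to $z_3>z_2$, where $\sigma'\neq 0$, so $p_3$ extends holomorphically through $c_+$ with no branching. Thus analytic continuation of our identity along a small closed loop around $c_+$ produces
\[
\mu_1p_2(c)+\mu_2p_1(c)+\mu_3p_3(c)=\mu
\]
near some point of $(c_-,c_+)$. Subtracting the original identity gives
\[
(\mu_1-\mu_2)(p_1(c)-p_2(c))\equiv 0,
\]
and since $p_1(c)<p_2(c)$ on $(c_-,c_+)$ this forces $\mu_1=\mu_2$.

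Running the same argument at $c_-$, where $\sigma''(z_2)\neq 0$ produces an analogous square-root branch point at which $p_2$ and $p_3$ swap while $p_1$ extends holomorphically, yields $\mu_2=\mu_3$. Combining the two equalities gives $\mu_1=\mu_2=\mu_3$, as required.

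The main thing to nail down carefully is the monodromy step: one must verify, using (C2), that the local holomorphic extensions of $p_1$ and $p_2$ are genuinely interchanged by a small loop around $c_+$ (rather than each returning to itself), and similarly at $c_-$. This follows from the standard fact that a simple quadratic critical point gives rise to a two-sheeted branched cover with nontrivial monodromy, but it is the one place where the hypothesis $\sigma''(z_j)\neq 0$ is genuinely used. Everything else is essentially bookkeeping with the identity theorem.
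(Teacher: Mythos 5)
Your proof is correct, and it takes a genuinely different route from the paper. The paper stays entirely real: it differentiates the extended identity to get $\sum_{i=1}^3\mu_i p_i'(c)=0$, divides by $p_2'(c)$, and computes the limit as $c\to c_+$ using $\sigma'(p_i(c))\,p_i'(c)=1$ together with a second chain-rule step involving $\sigma''$; this yields $\lim_{c\to c_+}p_1'(c)/p_2'(c)=-\mu_2/\mu_1=\lim_{c\to c_+}p_2'(c)/p_1'(c)$, hence $\mu_2^2/\mu_1^2=1$ and $\mu_1=\mu_2$, with a separate case needed when $\mu_1=0$ (and symmetrically at $c_-$). Your monodromy argument replaces that asymptotic computation by the observation that a simple quadratic critical point of $\sigma$ at $z_1$ makes $c_+$ a square-root branch point interchanging $p_1$ and $p_2$ while $p_3$ continues univalently, so continuing the identity $\sum\mu_ip_i\equiv\mu$ around a small loop and subtracting gives $(\mu_1-\mu_2)(p_1-p_2)\equiv 0$ directly. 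What this buys you is a cleaner conclusion with no division by $\mu_1$ and hence no case split for vanishing coefficients, and no delicate limit; what it costs is the setup of holomorphic extensions and the verification (which you correctly flag) that the monodromy at $c_\pm$ is nontrivial, which is exactly where $\sigma''(z_j)\neq 0$ enters — the same place it enters the paper's computation. Two cosmetic points: the "$z_3$" in your argument should be $p_3(c_+)$, and for the identity-theorem step you should note the neighbourhood of $(c_-,c_+)$ can be taken connected. Otherwise the argument is complete.
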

\begin{proof}
Since $\sum_{i = 1}^{3} \mu_{i} p_{i}(c) = \mu, \,\,\, \mbox{for all } c\in[\alpha,\beta]$ and the $p_i(c)$ are real analytic on $(c_{-}, c_{+})$ it follows that 
\begin{equation}\label{convcomb}
 \sum_{i = 1}^{3} \mu_{i} p_{i}(c) = \mu, \,\,\, \mbox{for all } c\in(c_-,c_+).
\end{equation}
Note that since $\sum_{i=1}^3\mu_ip_i'(c)=0$ we have for $c\in(c_-,c_+)$ that 
\begin{eqnarray}\label{e:cond}
  \mu_{1}\frac{p_{1}'(c)}{p_{2}'(c)} + \mu_{2} + \mu_{3}\frac{p_{3}'(c)}{p_{2}'(c)} = 0. 
\end{eqnarray}
As 
$c \to c_{+}$ we have that $p_{1}'(c) \to \infty$, $p_{2}'(c) \to - \infty$, and $p_{3}'(c) \to p_{3}'(c_{+}) = 1/\sigma'(p_{3}(c_{+}))$, where $0<\sigma'(p_3(c_+))<\infty$.  If $\mu_1>0$ we thus have that 
\begin{equation}\label{e:lim-ratio}
\underset{c \to c_{+}}{\lim} \frac{p_{1}'(c)}{p_{2}'(c)} = - \frac{\mu_{2}}{\mu_{1}}.
\end{equation}
By (C2) this gives
\[- \frac{\mu_{2}}{\mu_{1}} = \underset{c \to c_{+}}{\lim} \frac{\sigma'(p_{2}(c))}{\sigma'(p_{1}(c))} = \underset{c \to c_{+}}{\lim} \frac{\sigma''(p_{2}(c)) p_{2}'(c)}{\sigma''(p_{1}(c)) p_{1}'(c)}=\underset{c \to c_{+}}{\lim} \frac{ p_{2}'(c)}{ p_{1}'(c)},\]
since $p_{1}(c)$ and $p_{2}(c)$ both converge to $p_{1}(c_{+}) = p_{2}(c_{+})$ and $\sigma''(p_1(c_+))\neq 0$.
Hence
\[\frac{\mu_{2}^{2}}{\mu_{1}^{2}} = 1.\]
This means either $\mu_{1} = - \mu_{2},$ which is impossible since $\mu_{1},\mu_{2} > 0,$ or $\mu_{1} = \mu_{2}$ as required. If, on the other hand, $\mu_1=0$, then $\mu_2=0$ by \eqref{e:cond}, which, since $p_3(c)$ is not constant, implies $\mu_3=0$, in contradiction to $\mu>0$.  Arguing similarly for $c \to c_{-}$ we  also obtain $\mu_{2} = \mu_{3}.$
\end{proof}
The weakened form of (NC) can now be stated as follows:\vspace{.05in}

\noindent{\bf(NC3)}   For some $c\in(c_-,c_+)$ there holds
$$\frac{1}{3}\sum_{i=1}^3p_i(c)\neq \mu.$$
\begin{theorem}\label{cubicconv}
Assume {\rm (C1), (C2)} and {\rm (NC3)} hold. Then for any $p_0\in X$ we have $T(t)p_0\to\bar p$ as $t\to\infty$ for some $\bar p\in \mathcal E_\mu$.
\end{theorem}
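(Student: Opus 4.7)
The plan is to show that the spatial average $c(t):=\int_0^1 \sigma(p(x,t))\,dx$ converges to a limit $c_\infty$ as $t\to\infty$; once this is in hand, the concluding argument of Proposition \ref{p:ex} applies. Indeed, for a.e.\ $x$ the fibre satisfies $p_t(x,t)=-\sigma(p(x,t))+c_\infty+e(t)$ with $e(t):=c(t)-c_\infty\to 0$, and since (C1) forces $\sigma$ to be real analytic and nonconstant, hence nonconstant on every open interval, Lemma \ref{ncpego} yields $p(x,t)\to\bar p(x)$ with $\sigma(\bar p(x))=c_\infty$ for a.e.\ $x$. The universal bounds of Corollary \ref{c:bounds} and dominated convergence then give $T(t)p_0\to\bar p$ in $L^2(0,1)$ with $\bar p\in\mathcal E_\mu$.

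To prove convergence of $c(t)$, suppose for contradiction that $\liminf_{t\to\infty} c(t)<\limsup_{t\to\infty} c(t)$. Since $c$ is continuous, bounded, and $\sigma(\mathcal S)=\{c_-,c_+\}$ is finite, we may choose $r<s$ with $[r,s]\subset (\liminf c(t),\limsup c(t))\setminus\{c_-,c_+\}$, and by the intermediate value theorem $c(t)$ attains every value in $[r,s]$ for arbitrarily large $t$. The interval $[r,s]$ then lies in exactly one of $(-\infty,c_-)$, $(c_-,c_+)$, $(c_+,\infty)$. If it lies in a monotone branch, then $\sigma(z)=c\in[r,s]$ has a unique bounded solution $z=p_1(c)$ that is strictly monotone in $c$; combining Proposition \ref{p:convtimeder} (which gives $\sigma(p(\cdot,t))-c(t)\to 0$ in $L^2$) with the universal bounds shows, for any sequence $t_j\to\infty$ with $c(t_j)=c$, that $p(\cdot,t_j)\to p_1(c)$ in $L^2$ along a subsequence. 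The mass constraint forces $p_1(c)=\mu$ for every $c\in[r,s]$, contradicting strict monotonicity of $p_1$.

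In the remaining case $[r,s]\subset(c_-,c_+)$, pick $[\bar r,\bar s]\subset(r,s)$ and, for $\varepsilon>0$ sufficiently small and $t$ with $c(t)\in[\bar r,\bar s]$, define the sets $S_i(t)$ and measures $\mu_i(t)$ ($i=1,2,3$) exactly as in the proof of Proposition \ref{nondegenthm}. Hypotheses (C1), (C2) guarantee that $\sigma$ possesses three real-analytic inverse branches $p_i$ on $(c_-,c_+)$, so Remark \ref{threeroots} applies and produces limits $\bar\mu_i:=\lim_{t\to\infty,\,c(t)\in[\bar r,\bar s]}\mu_i(t)$; by \eqref{e:vol-frac-lim} the $\bar\mu_i$ are nonnegative with $\sum_{i=1}^3\bar\mu_i=1$, and passing to the limit in the identity $\int_0^1 p(x,t)\,dx=\mu$, just as in the final display of the proof of Proposition \ref{nondegenthm}, gives $\sum_{i=1}^3\bar\mu_i p_i(c)=\mu$ for every $c\in[\bar r,\bar s]$. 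Proposition \ref{cubiclike} then forces $\bar\mu_1=\bar\mu_2=\bar\mu_3=\tfrac13$, so $\tfrac13\sum_{i=1}^3 p_i(c)\equiv\mu$ on $[\bar r,\bar s]$, and by real analyticity of the $p_i$ this identity extends to $(c_-,c_+)$, in direct violation of (NC3).

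The main obstacle is the three-branch case: hypothesis (NC3) is too weak to run the linear-independence argument of Proposition \ref{nondegenthm} directly, so the proof must exploit the specifically cubic-like geometry of $\sigma$ via Remark \ref{threeroots} (to produce the limits $\bar\mu_i$), Proposition \ref{cubiclike} (to pin them down to $\tfrac13$), and real analyticity (to extend the resulting identity from $[\bar r,\bar s]$ to all of $(c_-,c_+)$), with (NC3) delivering the final contradiction.
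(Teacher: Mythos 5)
Your proposal is correct and follows essentially the same route as the paper: reduce to convergence of $c(t)=\int_0^1\sigma(p(x,t))\,dx$ via Lemma \ref{ncpego}, dispose of the monotone-branch case by the mass constraint as in Proposition \ref{nondegenthm}, and in the three-root case use Remark \ref{threeroots} to obtain the limits $\bar\mu_i$, Proposition \ref{cubiclike} to force $\bar\mu_i=\tfrac13$, and real analyticity to propagate $\tfrac13\sum_i p_i(c)=\mu$ to all of $(c_-,c_+)$, contradicting (NC3). The only cosmetic difference is your explicit choice of the inner interval $[\bar r,\bar s]$, which the paper leaves implicit by citing Proposition \ref{nondegenthm}.
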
\begin{proof}
Suppose not. By Lemma \ref{ncpego} it is enough to show that $c(t)$ converges as $t\to\infty$. If not, then there exists an interval $[\alpha,\beta]$ with $\alpha<\beta$ and $c(s_j)=\alpha, c(t_j)=\beta$ for sequences $s_j,t_j\to\infty$. By the same argument as in Proposition \ref{nondegenthm} we may assume that $[\alpha,\beta]\subset(c_-,c_+)$. Then by Remark \ref{threeroots} we have that $\lim_{t\to\infty,\; c(t)\in [\alpha,\beta]}\mu_i(t):=\mu_i$ exists for $i=1,2,3$. Since 
$$\underset{c(t)\in [\alpha,\beta]}{\lim_{t\to\infty}}\sum_{i=1}^{3}\mu_i(t)p_i(c(t))=\mu$$
 we have that $\sum_{i=1}^3\mu_i p_i(c)=\mu$ for all $c\in[\alpha,\beta]$. Thus by Proposition \ref{cubiclike} we have that the $\mu_i$ are all equal, and since $\sum_{i=1}^3\mu_i=1$ we have that $\mu_1=\mu_2=\mu_3=\frac{1}{3}$. Hence $\frac{1}{3}\sum_{i=1}^3p_i(c)=\mu$ for all $c\in [\alpha,\beta]$, and thus by real analyticity for all $c\in(c_-,c_+)$. This contradicts (NC3).
\end{proof}
\begin{remark}
We could also have used this method to prove Proposition \ref{p:ex}. Indeed (NC3) holds because $\sum_{i=1}^3p_i(c)=0$ and $\mu\neq 0$. 
\end{remark}

\section{Discussion}
\label{full}

  In this work we analyzed the quasistatic problem corresponding to the equation \eqref{e:visco} with $S(y_{x}, y_{xt}) = y_{xt},$ which allowed us to make a connection with the theory of gradient flows. It would be interesting to generalize our analysis to the case of quasistatic motion for more general $S$ that are not linear in $y_{xt}$. In order to comment on possible extensions of our results to the fully dynamical problem, we need to consider two separate issues, namely the existence and the asymptotic behaviour of solutions. We discuss this first for the one-dimensional case with $S(y_{x}, y_{xt}) = y_{xt}$.\vspace{.05in}

\noindent (i) Existence: The global existence of solutions for the equation expressed in terms of the displacement $u(x,t)=y(x,t)-\mu x$, that is
\begin{equation}\label{e:full-dyn-S}
u_{tt} = \sigma(u_{x})_{x} + u_{xxt}, \quad x \in (0,1),\; t > 0,
\end{equation}
with boundary conditions
\begin{equation}\label{dirichlet}
u(0,t)=u(1,t)=0,
\end{equation}
was analyzed by   \cite{Pego} using a nonlocal  transformation inspired by \cite{And}. For the case of displacement boundary conditions (see \cite{BHJPS}) the transformation is given by 
\[w(x,t) = \int_{0}^{x} u_{t}(s,t)\, ds-\int_0^1\int_0^xu_t(s,t)\,ds\,dx, \quad q(x,t) = u_{x}(x,t) - w(x,t).\]
Then $w$ and $q$ form a solution to the problem
\begin{eqnarray}w_{t} &=& w_{xx} + \mathcal F(p + w),\label{eqnmotion1}\\ \qquad q_{t} &=& - \mathcal F(q + w),\label{eqnmotion2}\end{eqnarray}
where $\mathcal F(z)=\sigma (z)-\int_0^1\sigma(z)\,dx$.  Thus \eqref{e:qseqn} corresponds to formally setting $p=q+w$ in \eqref{eqnmotion2} and neglecting $w_t$, which is an integrated form of the inertia. This transformation gives a global existence theory for \eqref{e:full-dyn-S} with initial data $u(\cdot,0)\in W_0^\infty(0,1), u_t(\cdot,0)\in L^2(0,1)$ when $\sigma=\sigma(z)$ is defined for all $z \in \mathbb R$ and satisfies suitable conditions as $|z|\to\infty$. A number of authors have proved the global existence and uniqueness of solutions to systems of nonlinear viscoelasticity   satisfying $\mbox{ess inf}_{x\in(0,1)} y_x(x,t)  >0$ for $t>0$, provided $\mbox{ess inf}_{x\in(0,1)} y_x(x,0)  >0$ (see, for example, \cite{Andthesis}, \cite{Ant-Seid},  \cite{Ant-Seid1}, \cite{Daf}, \cite{Watson}). However, there does not seem to be any version of our universal lower (or upper) bound, and this would be interesting to investigate.  \vspace{.05in}

\noindent (ii) Stability: The asymptotic behaviour of solutions to \eqref{e:full-dyn-S}, \eqref{dirichlet} as $t\to\infty$ was studied in particular by   \cite{And-Ball}, who showed that under the nondegeneracy assumption (NC) $u_x(\cdot,t)$ converges in the sense of Young measures to a unique Young measure $(\nu_x)_{x\in(0,1)}$. In fact, although it does not seem to have been explicitly noted in the literature, under (NC) and other hypotheses, every solution is such that $u_x(\cdot,t)$ converges boundedly almost everywhere to a unique equilibrium as $t\to \infty$. This follows by noting that (NC) implies that $c(t)=\int_0^1\sigma(u_x(x,t))\,dx$ tends to a limit, and that this implies that $u_x(x,t)$ does so for each $x$ on account of \eqref{eqnmotion2} and an easy modification of Lemma \ref{ncpego} in which $f(z(t))$ is replaced by $f(z(t)+q(t))$ with $\lim_{t\to\infty}q(t)=0$. Note that this argument gives relative compactness of positive orbits via (NC), whereas in our problem we can prove relative compactness without this using Helly's theorem.

For the equations of nonlinear viscoelasticity of rate type in three space dimensions there is currently no global existence theory for solutions for frame-indifferent constitutive equations, except that of \cite{Po-Fe-82} for solutions of small energy. In the case of the isothermal stress constitutive law $S(Dy,Dy_t)=D_AW(Dy)+\mu Dy_t$, where $W=W(A)$ is the elastic stored-energy function and $\mu>0$, which is not frame-indifferent, there is an existence theory due to \cite{Rybka92} (see also \cite{Tvedt} for a theory allowing nonlinear dependence on the velocity gradient $Dy_t$, but which contravenes frame-indifference). As regards the asymptotic behaviour of solutions in situations corresponding to solid phase transformations almost nothing is known. The key issue  is whether solutions $y=y(x,t)$ typically generate (local or global) minimizing sequences $y(\cdot,t_j)$ for the energy for sequences $t_j\to\infty$. This was shown never to be the case for the modification of \eqref{e:full-dyn-S} in which the term $-\alpha u$ is added to the right-hand side, where $\alpha>0$. Also  \cite{Frie-Mc-97} show that for \eqref{e:full-dyn-S}, \eqref{dirichlet} there are dynamically stable equilibria which are not local minimizers of the energy, so that for initial data  close to such an equilibrium the solution does not generate a minimizing sequence, a result that probably extends to \eqref{bc2}.
The numerical calculations of \cite{Swart-Holmes} suggest that dynamical generation of minimizing sequences is more likely in higher dimensions. There seem to be no general techniques for deciding whether positive orbits in such problems are relatively compact or not. If all such orbits are relatively compact then we can expect every solution to have  an $\omega$-limit set consisting just of equilibrium solutions. In this connection we mention the recent result of \cite{Norton}, who shows the existence of infinitely many equilibrium solutions for a model 2D problem.

\section*{Acknowledgements}
We are grateful to Gero Friesecke, Bob Pego and Endre S\"uli  for useful discussions. We also thank the referee for valuable comments.
The research of both authors was partly supported by EPSRC grant EP/D048400/1 and by the
EPSRC Science and Innovation award to the Oxford Centre for Nonlinear PDE (EP/E035027/1).
The research of JMB was also supported by the European Research Council under the European
Union's Seventh Framework Programme (FP7/2007-2013) / ERC grant agreement no 291053 and
by a Royal Society Wolfson Research Merit Award. The research of Y\c{S} was also supported by T\"UB\.ITAK fellowship 2213.

\bibliographystyle{apalike} 
\bibliography{bibliography}

\end{document}